\newcommand{\A}{\mathcal{A}}
\newcommand{\C}{\mathbb{C}}
\newcommand{\R}{\mathbb{R}}
\newcommand{\N}{\mathbb{N}}
\newcommand{\Z}{\mathbb{Z}}
\let\epsilon\varepsilon
\newcommand{\om}{\boldsymbol{\omega}}
\def\bbe{{\bf e}}
\def\bX{\boldsymbol{X}}
\def\bx{\boldsymbol{x}}
\def\bY{\boldsymbol{Y}}
\def\bs{\boldsymbol{s}}
\def\bzero{\boldsymbol{0}}
\def\bone{\boldsymbol{1}}
\def\bv{\boldsymbol{v}}
\def\bK{\boldsymbol{K}}
\providecommand{\norm}[1]{\lvert\lvert #1 \rvert\rvert}
\newtheorem{theorem}{Theorem}[section]
\newtheorem{lemma}[theorem]{Lemma}
\newtheorem{prop}[theorem]{Proposition}
\newtheorem{corollary}[theorem]{Corollary}
\theoremstyle{definition}
\newtheorem{definition}[theorem]{Definition}
\theoremstyle{remark}
\newtheorem{remark}[theorem]{Remark}
\numberwithin{equation}{section}
\newcommand{\abs}[1]{\lvert#1\rvert}
\newcommand{\floor}[1]{\lfloor#1\rfloor}
\begin{document}

\title{Quantitative weak mixing of self-affine tilings}

\author{Juan Marshall-Maldonado}
\address{Department of Mathematics, Bar-Ilan University, Ramat Gan.}
\email{jgmarshall21@gmail.com}




\begin{abstract}
We study the regularity of spectral measures of dynamical systems arising from a translation action on tilings of substitutive nature. The results are inspired in the work of Bufetov and Solomyak \cite{bufetov2014modulus}, where they established a log-Hölder modulus of continuity of one-dimensional self-similar tiling systems. We generalize this result to higher dimensions in the more general setting of self-affine tilings systems. Further analysis leads to uniform estimates in the whole space of spectral parameters, allowing to deduce logarithmic rates of weak mixing.
\end{abstract}

\date{\today}

\keywords{Substitution tiling; spectral measure; log-H\"older estimate; weak mixing}
\subjclass[2020]{37A30, 37B10, 47A11}

\maketitle



\section{Introduction}
Crystallography is the study of the atomic structure of crystals, typically through the widely used X-ray diffraction technique. For many years, only materials that produced diffraction patterns with ``ordered'' structures were known. In 1982, materials scientist Dan Schechtman discovered certain aluminium-manganese alloys whose diffraction patterns exhibited rotational symmetries of order 5—an arrangement that cannot occur in patterns with translational symmetries, as dictated by the crystallographic restriction theorem. This groundbreaking discovery earned Schechtman the Nobel Prize in Chemistry in 2011.

The discovery of these materials was partially anticipated by prior literature describing tilings and \textit{Delone sets} without translational symmetries. A tiling is a countable collection of tiles that fully covers a space without any overlaps. British physicist Roger Penrose published examples of tilings that exhibited diffraction patterns without translational symmetries.

The above-mentioned diffraction problems are closely related to the spectral study of dynamical systems of tilings. In dynamical terms, these diffraction patterns, often called Bragg peaks in quasicrystal literature, correspond to eigenvalues of the Koopman representation linked to the translation action on the tiling space (see \cite{baake2015dynamical} for details). 

A very rich structure of the tiling system appears when we consider tilings that come from some \textit{inflation rule} or \textit{substitution}: a substitution $\zeta$ maps each tile in a finite set to some “supertile” using a fixed expanding linear transformation, then decomposes it into a finite union of non-overlapping copies of the original tiles, similar to assembling a jigsaw puzzle. Fixed points of the substitution (when acting on tilings) are called \textit{self-affine} tilings. If the expanding map is a similarity, the tiling is called \textit{self-similar}. Substitutive tilings are a rich source of aperiodic structures, with applications extending beyond mathematics to physics and material science. The interested reader may consult the surveys \cite{baake2016aperiodic,articletrevino} or the book \cite{baake2013aperiodic} for a deeper introduction.

In the context of dynamical systems, the substitution acts as a renormalization tool that aids in understanding long orbits of the natural translation action of $\R^d$ on tiling spaces. More precisely, for a tiling $\bX$ of $\R^d$ and a vector ${\bs}\in \R^d$, define $\phi_{{\bs}}(\bX)$ as the tiling obtained from translating each tile in $\bX$ by $-{\bs}$. We examine the spectrum of the Koopman representation of the translation flow for certain dynamical systems derived from self-similar and self-affine tilings of $\R^d$. We focus on the decay of spectral measures for weakly-mixing systems, excluding cases where the system has non-trivial eigenvalues.

This work is based on the unpublished master thesis of the author \cite{marshall2017modulo} (in Spanish): we correct some arguments and extend the results of this thesis. The aim is to extend a result of A. Bufetov and B. Solomyak in \cite{bufetov2014modulus}, from one-dimensional substitution tilings to higher dimensions. For a comprehensive overview of the spectrum of substitutions, see \cite{bufetov2023self}.

The following is a list of some recent related work relevant to us:
\begin{itemize}
	\item In \cite{bufetov2013limit} A. Bufetov and B. Solomyak obtained upper bounds for deviations of ergodic averages for self-similar tilings.
	\item In \cite{schmieding2018self} S. Schmieding and R. Treviño showed upper bounds for deviations of ergodic averages for self-affine tilings.
	\item In \cite{Jemme} J. Emme generalize Theorem 6.2 from \cite{bufetov2014modulus} for a self-similar substitution tiling: he obtains a modulus of continuity around zero for self similar tilings.
	\item In \cite{bufetov2018holder,JEP_2021__8__279_0} A. Bufetov and B. Solomyak and G. Forni in \cite{forni2022twisted} proved quantitative weak mixing of generic translations surfaces of every genus greater or equal than 2. This is achieved generalizing the setup developed in \cite{bufetov2014modulus} for substitutions, to the S-adic setting in the former case, and in the latter case following a cohomological approach.
	\item In \cite{avila2023quantitative} A. Avila, G. Forni and P. Safaee obtained quantitative weak mixing for interval exchange transformations.
	\item In \cite{trevino2020quantitative} R. Treviño showed a generalization Theorem 4.1 of \cite{bufetov2014modulus} in the context of random deformations of multiple substitution tilings of higher dimensions.
	\item In \cite{solomyak2024spectral} B. Solomyak and R. Treviño built a spectral cocycle for higher dimensional tilings (generalizing the one of  \cite{bufetov2020spectral}) to obtain lower dimension estimates on the spectral measures of pseudo self-similar tilings.
	\item Related work on the absence of absolutely continuous component of the spectral measures is done in \cite{baake2019geometric,baake2019renormalisation}.
\end{itemize}

The aim of the present paper is to study the decay of the spectral measures of substitution tiling systems of $\R^d$ and the decay of Cesàro means of correlations. Since the self-affine tiling case requires additional assumptions compared to the self-similar case, we present the results separately. In both cases, the main assumption is that the eigenvalues of the substitution expansion have Galois conjugates with moduli greater than 1 that lie outside the linear expansion's spectrum. In this case, we will say the substitution is \textit{strongly totally non-Pisot}. The formal definition is given in Definition \ref{def:stnp}.

In Theorem \ref{thm:main} and \ref{thm:mainselfaffine} we prove uniform estimates for spectral measures, in the self-similar and self-affine case respectively. As a consequence of this regularity, we deduce explicit decay of Cesàro means of correlations (see for example \cite{Knill}). This is also the strategy followed in \cite{bufetov2018ergodic,avila2023quantitative} to prove quantitative weak mixing. In simplest terms, for two observables $f,g\in L^2$ of the tiling space $\mathfrak{X}_\zeta$ associated with the substitution, we show an inequality of the form 
\[
\dfrac{1}{R^d} \int_{[-R,R]^d} \left| \langle f \circ \phi_{{\bs}}, g \rangle_{L^2} \right|^2 d{\bs} \leq C_{f,g}\log(R)^{-\alpha},
\]
for every $R\geq2$, for some constants $\alpha,C_{f,g}>0$. This is the content of Theorems \ref{thm:corr} and \ref{thm:corr2} (self-similar/self-affine case).

The rest of the paper is organized as follows: Section 2 provides necessary background of tiling systems and their spectral theory. In Section 3 we show some preliminary results on the structure of twisted Birkhoff integrals and recall some conditions for weak mixing of self-affine tilings. In Section 4 we prove the key result for the regularity of spectral measures. In Section 5 we prove quantitative weak mixing for self-similar tilings (Theorem \ref{thm:corr}) by proving the regularity of spectral measures in Theorem \ref{thm:main}. Finally, Section 6 addresses the more subtle case of self-affine tilings.

\subsection{Acknowledgments}I would like to express my gratitude to Alejandro Maass who supervised my master's thesis at Universidad de Chile, which served as a basis for this work. I would also like to thank Boris Solomyak and Rodrigo Treviño for reading early versions of this manuscript and suggesting many improvements. My research is supported by the Israel Science Foundation grant \#1647/23.

\section{Background on tiling spaces}
\subsection{Tiling dynamical systems}
Most of the definitions and results stated in this section may be found in \cite{robinson2004symbolic} or \cite{solomyak1997dynamics}. Let a \textbf{tile} (of $\R^d$) to be a pair $T=(\text{supp}(T),\text{punc}(T))$, where $\text{supp}(T)$, the \textbf{support} of $T$, is a compact subset of $\R^d$ which is the closure of its interior and $\text{punc}(T)$, the \textbf{puncture} of $T$ is a point in the interior of $\text{supp}(T)$. A \textbf{patch} is a finite set of tiles $\boldsymbol{P} = \{P_1,\dots, P_n\}$ for which $\text{int}(\text{supp}(P_i)) \cap \text{int}(\text{supp}(P_j)) = \emptyset$, for $i\neq j$ ($\text{int}(A)$ denotes the interior of a set $A\subseteq\R^d$). The support of a patch is the union of the supports of the tiles conforming it. A \textbf{tiling} is an infinite collection of tiles $\bX = \{X_i\}_{i\in\N}$ such that every finite sub-collection defines a patch and covers the whole space, i.e., $\R^d = \bigcup_{i\in\N} \text{supp}(X_i)$.

Let ${\bs} \in \R^d$ and $T$ a tile. The \textbf{translate} of $T$ by ${\bs}$ is the tile $T+{\bs} = (\text{supp}(T)+{\bs}, \text{punc}(T)+{\bs})$ and this notion defines an equivalence relation: the class of a tile is the set of all translations of it. The notion of translate is extended naturally to patches and tilings also. We will be interested only in tilings conformed of translations of a fixed finite set of nonequivalent tiles $\A = \{T_1,\dots,T_m\}$ that we call \textbf{alphabet}, and their elements \textbf{prototiles}. Because these tiles represent all their translations, we assume $\text{punc}(T_j) = 0$, for all $j=1,\dots,m$.

Denote by $\mathcal{P}_{\A}$ the patches conformed only by translates of tiles in $\A$, and the \textbf{full-tiling space} associated to an alphabet $\A$ by 
\[
\mathfrak{T}_{\A} = \left\{ \bX = \left\{X_i\right\}_{i\in\N} \text{ tiling } | \: \forall i\geq1, X_i \in \mathcal{P}_\A \right\}.
\]
When $d>1$, it is not a trivial task to decide whether this set is empty or not, but we will always assume that given an alphabet there are tilings conformed only by translations of tiles in the alphabet.

A well-known metric for $\mathfrak{T}_{\A}$ is the following: for a tiling $\bX$ and a subset $F\subseteq \R^d$ denote by $]F[^{\bX}$ the patch given by the tiles of $\bX$ whose supports are contained in $F$. Then, a metric Dist on $\mathfrak{T}_{\A}$ is defined by
\small{
\begin{align*}
\text{Dist}(\bX,\bY) &= \min \left( \: \text{D}(\bX,\bY),\: 1/\sqrt{2} \: \right), \\
\text{D}(\bX,\bY) &= \inf \: \left\{ \epsilon > 0 \: | \: \exists\: {\bs} \in B_\epsilon: \: \left]B_{1/\epsilon}\right[^{\bX} = \left]B_{1/\epsilon}\right[^{\bY} + {\bs} \: \right\},
\end{align*}}where $B_r$, denotes the Euclidean ball centered at the origin with radius $r>0$. In plain language, this metric consider close two tilings that coincide in a large central patch after a small translation. It is desirable to have a compact space, but the metric introduced does not ensure $\mathfrak{T}_{\A}$ have this property. To do so, we will always assume that every tiling in the full-tiling space has \textbf{finite local complexity} (f.l.c.), this means that for every tiling in the space, the number of different patches conformed of two tiles is finite up to translation. 
\begin{theorem}[see \cite{robinson2004symbolic}, Theorem 2.9]
	If every tiling in $\mathfrak{T}_{\A}$ has f.l.c., then the full-tiling endowed with \textup{Dist} is a compact metric space. Also, the map $\phi : \R^d \times \mathfrak{T}_\A \longrightarrow \mathfrak{T}_\A$, $\phi({\bs},\bX) = \phi_{{\bs}}(\bX) = \bX-{\bs} = \{T-{\bs}\in\bX\,|\,T\in\bX \}$ is continuous.
\end{theorem}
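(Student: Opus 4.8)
The plan is to treat the two claims separately: compactness of $(\mathfrak{T}_\A,\text{Dist})$ and continuity of the action $\phi$. First I would record that $\text{Dist}$ is genuinely a metric. Symmetry follows by replacing the witnessing vector $\bs\in B_\epsilon$ in the definition of $\text{D}$ by $-\bs$, and the truncation at $1/\sqrt2$ is exactly what makes the triangle inequality hold: when one concatenates an $\epsilon_1$-matching between $\bX$ and $\bY$ with an $\epsilon_2$-matching between $\bY$ and $\boldsymbol{Z}$, the two small shifts add up and a common patch of $\bX$ and $\boldsymbol{Z}$ survives only on a ball whose radius is governed by $\min(1/\epsilon_1,1/\epsilon_2)$, so the cap prevents the blow-up of the $1/\epsilon$ window from spoiling subadditivity. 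Since $\mathfrak{T}_\A$ is a metric space, compactness is equivalent to sequential compactness, so the heart of the matter is to extract from an arbitrary sequence $(\bX^{(n)})_n$ a convergent subsequence.

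The engine of compactness is finite local complexity. I would begin by upgrading the hypothesis from two-tile patches to patches of arbitrary bounded size: a standard induction shows that if two-tile patches fall into finitely many translation classes, then for each fixed radius $k\in\N$ the patches $]B_k[^{\bX}$, as $\bX$ ranges over $\mathfrak{T}_\A$, realize only finitely many classes up to translation. Anchoring each such patch so that a tile meets a fixed neighbourhood of the origin confines the admissible translation vectors to a compact set, whence the family of centred patches supported in $B_k$ is precompact for the topology induced by $\text{Dist}$. A diagonal extraction then yields a single subsequence $(\bX^{(n_j)})_j$ along which, for every $k$, the anchored patch $]B_k[^{\bX^{(n_j)}}$ converges to a limit patch $P_k$, with $P_1\subseteq P_2\subseteq\cdots$ nested and mutually compatible; I would set $\bX:=\bigcup_k P_k$ as the candidate limit.

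The main obstacle is the verification that $\bX$ is a genuine element of $\mathfrak{T}_\A$ and that $\text{Dist}(\bX^{(n_j)},\bX)\to0$. Here I would check, in order: that the nested patches $P_k$ are compatible so that their union is a well-defined collection of tiles; that this collection covers all of $\R^d$, using that each $\bX^{(n)}$ tiles $\R^d$ and that the prototiles, being finite in number, have uniformly bounded diameter, so that for large $j$ every point of $B_k$ lies in a tile of $\bX^{(n_j)}$ contained in $B_{k+1}$ and no gap can open in the limit; that the limit again has f.l.c. and uses only translates of prototiles in $\A$, so that $\bX\in\mathfrak{T}_\A$; and finally that convergence of every centred patch with anchoring shifts tending to $\bzero$ unwinds, through the definition of $\text{D}$, to $\text{Dist}(\bX^{(n_j)},\bX)\to0$. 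I expect the bookkeeping of translation vectors—guaranteeing that the anchoring shifts can be chosen consistently across the radii $k$ and assembled into one coherent limiting shift—to be the principal technical nuisance, and the place where f.l.c. is used most essentially.

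For the continuity of $\phi$ at a point $(\bs_0,\bX_0)$, I would use the triangle inequality to split
\[
\text{Dist}(\bX-\bs,\ \bX_0-\bs_0)\ \le\ \text{Dist}(\bX-\bs,\ \bX-\bs_0)\ +\ \text{Dist}(\bX-\bs_0,\ \bX_0-\bs_0).
\]
The first term measures how a \emph{fixed} tiling moves under nearby translations; it tends to $0$ as $|\bs-\bs_0|\to0$ because $\bX-\bs$ and $\bX-\bs_0$ are exact global translates of one another, so the only mismatch inside the defining window comes from displacing $B_{1/\epsilon}$ by $\bs_0-\bs$ and is confined to a thin annulus near the boundary of the window—harmless once $\epsilon$ is taken slightly smaller. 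The second term records that translation by the \emph{fixed} vector $\bs_0$ is uniformly continuous as a self-map: if $\bX$ and $\bX_0$ agree on $B_{1/\epsilon}$ up to a shift in $B_\epsilon$, then after translating both by $-\bs_0$ they agree on $B_{1/\epsilon-|\bs_0|}$ up to the same shift, so $\text{Dist}(\bX-\bs_0,\bX_0-\bs_0)$ is controlled by $\text{Dist}(\bX,\bX_0)$ with a loss depending only on $|\bs_0|$. Making $|\bs-\bs_0|$ and $\text{Dist}(\bX,\bX_0)$ small therefore makes both terms small, which is precisely joint continuity at $(\bs_0,\bX_0)$.
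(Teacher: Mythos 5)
This is a background result that the paper quotes from Robinson (\emph{Symbolic dynamics and tilings of $\R^d$}, Theorem 2.9) and does not prove, so there is no internal proof to compare against. Your outline is the standard argument found in that reference and in Solomyak's \emph{Dynamics of self-similar tilings}: upgrade f.l.c.\ from two-tile patches to patches of any bounded diameter, use finiteness of translation classes plus compactness of the translation parameter to make each map $\bX\mapsto\,]B_k[^{\bX}$ have precompact image, diagonalize, and assemble the limit; continuity of $\phi$ by splitting off the fixed-tiling and fixed-translation terms. The one place where your sketch stays genuinely schematic is the exact patch equality demanded by the definition of $\mathrm{D}$ (tiles entering or leaving the window $B_{1/\epsilon}$ under a small shift): the clean fix is to note that the critical radii at which $]B_r[^{\bX}$ and $]B_r[^{\bX-\bs}+\bs$ can disagree form a set of small measure in any annulus $[A,2A]$, so a good radius can always be chosen; your ``take $\epsilon$ slightly smaller'' gestures at this but does not pin it down. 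With that caveat, the proposal is correct and is essentially the proof the cited source gives.
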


The continuous action of $\R^d$ on the full-tiling, defined by $\phi$ in the theorem above, is called \textbf{translation}. If $\mathfrak{X}$ is a $\phi$-invariant and closed subset of $\mathfrak{T}_{\A}$, the pair $(\mathfrak{X},\phi)$ will be called (topological) \textbf{tiling dynamical system}. As usual, we can endorse $\mathfrak{X}$ with its Borel $\sigma$-algebra $\mathcal{B}(\mathfrak{X})$, and a Borel $\phi$-invariant probability measure $\mu$, i.e., $\mu(\phi_{{\bs}}(B)) = \mu(B)$, for every ${\bs}\in\R^d$ and $B \in \mathcal{B}(\mathfrak{X})$, in order to study the measurable dynamical system $(\mathfrak{X},\mathcal{B}(\mathfrak{X}),\mu,\phi)$.

\subsection{Substitutions}
Let $L : \R^d \longrightarrow \R^d$ be a linear \textbf{expansive} map, i.e., every eigenvalue of $L$ lies outside of the closed unit disk. Now we define a substitution in the context of tilings of $\R^d$. Let $\A = \{T_1,\dots,T_m\}$ be an alphabet.
\begin{definition}
	 A \textbf{substitution} with expansion $L$ over the alphabet $\A$ is a map $\zeta : \A \longrightarrow \mathcal{P}_\A$, that satisfies  $\text{supp}(\zeta(T)) = L \left( \text{supp}(T) \right)$, for every $T \in \A$. 
\end{definition}
Simply put, $L(\text{supp}(T))$ man be split into non-overlapping tiles that are translations of those in $\A$. The definition of the substitution may be extended in a natural way to translates of the prototiles by $\zeta(T+{\bs}) = \zeta(T) + L({\bs})$, and to patches and tilings by $\zeta(\boldsymbol{P}) = \bigcup_{T\in\boldsymbol{P}} \zeta(T)$ and $\zeta(\bX) = \bigcup_{T\in\bX} \zeta(T)$ respectively. In particular, now the iterations $\zeta^n = \zeta \circ \dots \circ \zeta$ are well-defined. Associated to a substitution $\zeta$, the \textbf{substitution-matrix} denoted by $\mathcal{S}_{\zeta}$ is the nonnegative integer matrix of dimension $m\times m$ whose $(i,j$)-entry is given by the number of translates of $T_i$ that appear in $\zeta(T_j)$. If $\mathcal{S}_{\zeta}$ is a primitive matrix, we say that $\zeta$ is \textbf{primitive}.
\begin{definition}
	Let $\zeta$ be a substitution over the alphabet $\A = \{T_1,\dots,T_m\}$. The \textbf{tiling space associated to} $\zeta$ , which we will denote by $\mathfrak{X}_{\zeta}$, the set of tilings $\bX \in \mathfrak{T}_{\A}$ such that every patch of $\bX$ is a translate of a sub-patch of $\zeta^n(T_j)$, for some $1\leq j\leq m$ and $n\in\N$.
\end{definition}
A classic result in the literature ensures that for any primitive substitution $\zeta$, there exists a tiling $\bX\in\mathfrak{X}_{\zeta}$ such that $\zeta^k(\bX)=\bX$, for some $k\in\N$ (see \cite{robinson2004symbolic}, Theorem 5.10). In the case that $k=1$, the tiling $\bX$ is called a \textbf{self-affine}. If the expansion $L$ satisfies
\begin{equation*}
	\norm{L\bx} = \lambda \norm{\bx}, \quad \forall \bx\in\R^d
\end{equation*}
for some $\lambda > 1$, we will say $\bX$ is \textbf{self-similar}. When $\zeta$ admits a self-affine/self-similar tiling, we say this substitution is self-affine/self-similar.
\\

Given an alphabet $\A = \{T_1,\dots,T_m\}$ and a substitution $\zeta$ with expansion $L$, consider the \textbf{tiles of order} $k$, for $k\in\Z$, defined by $L^k\A = \{L^kT_1,\dots,L^kT_m\}$, where $L^kT_j = (L^k(\text{supp}(T_j)),L^k(\text{punc}(T_j)))$. In this new alphabet, we define the substitution $L^k\zeta$, as $L^k\zeta(L^kT_j) = L^k(\zeta(T_j))$. The \textbf{subdivision map}
\[
\Upsilon_k : \mathfrak{X}_{L^k\zeta} \longrightarrow \mathfrak{X}_{L^{k-1}\zeta},
\]
acts by dividing each tile of order $k$ into tiles of order $k-1$ according to the rule implicit in the substitution. The theorem below gives the condition necessary to define the inverse of the subdivision map. We say that a tiling $\bX \in \mathfrak{X}_{\zeta}$ is \textbf{aperiodic} if $\bX-{\bs} = \bX$ for ${\bs}\in\R^d$ only when ${\bs} = {\bzero}$. If every $\bX\in \mathfrak{X}_{\zeta}$ is aperiodic, we say that the substitution $\zeta$ is aperiodic.
\begin{theorem}[B. Solomyak, \cite{SolUniq} Theorem 1.1]\label{thm:Uniq}
	Let $\zeta$ be a primitive and aperiodic substitution with expansion $L$. Then, for every $\bX, \bY \in \mathfrak{X}_{L\zeta}$
	\[
	\Upsilon_1(\bX) = \Upsilon_1(\bY) \iff \bX = \bY.
	\]
	In particular, $\zeta$ viewed as a map from $\mathfrak{X}_{\zeta}$ to itself, is injective if and only if at least one $\bX$ is aperiodic (by primitivity).
\end{theorem}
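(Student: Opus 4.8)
The forward implication $\bX=\bY\Rightarrow\Upsilon_1(\bX)=\Upsilon_1(\bY)$ is immediate because $\Upsilon_1$ is a well-defined map, so the whole content is the injectivity of $\Upsilon_1$, i.e. the \emph{unique composition} (recognizability) property. I would first recast it in terms of the substitution itself. Subdividing an order-$1$ tile according to the substitution rule is exactly applying $\zeta$ to the corresponding order-$0$ tile, which gives the scaling relation $\Upsilon_1=\zeta\circ L^{-1}$ as a map $\mathfrak{X}_{L\zeta}\to\mathfrak{X}_{\zeta}$: if $\bX\in\mathfrak{X}_{L\zeta}$ then $\boldsymbol{W}:=L^{-1}\bX\in\mathfrak{X}_{\zeta}$ and $\Upsilon_1(\bX)=\zeta(\boldsymbol{W})$. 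Since $L^{-1}:\mathfrak{X}_{L\zeta}\to\mathfrak{X}_{\zeta}$ is a bijection, injectivity of $\Upsilon_1$ is equivalent to injectivity of $\zeta:\mathfrak{X}_{\zeta}\to\mathfrak{X}_{\zeta}$, and it suffices to prove: if $\zeta(\boldsymbol{W})=\zeta(\boldsymbol{W}')=:\boldsymbol{Z}$ with $\boldsymbol{W},\boldsymbol{W}'\in\mathfrak{X}_{\zeta}$, then $\boldsymbol{W}=\boldsymbol{W}'$.

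The plan is a proof by contradiction exploiting primitivity (minimality and repetitivity) together with aperiodicity. Assume $\boldsymbol{W}\neq\boldsymbol{W}'$. The two tilings endow the common refinement $\boldsymbol{Z}$ with two supertile decompositions, and since they differ there is a tile $t$ of $\boldsymbol{Z}$ assigned to supertiles of different type or relative position by $\boldsymbol{W}$ and by $\boldsymbol{W}'$. By finite local complexity the overlaid data (a tile of $\boldsymbol{Z}$ together with its two supertile memberships) realizes only finitely many local patterns, so the pairs live in an f.l.c. space $\hat{\mathfrak{X}}$ factoring onto $\mathfrak{X}_{\zeta}$, on which primitivity makes the relevant subsystem minimal and repetitive; hence the disagreement pattern, once it occurs, recurs with bounded gaps throughout $\R^d$. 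Tracking the relative position of $t$ inside its two supertiles and passing to a subsequence (finitely many positions), the \emph{local} displacement between the two supertile punctures containing $t$ is a fixed nonzero vector $\bv$.

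The heart of the argument is a renormalization step. Using the hierarchical supertile structure that every tiling in $\mathfrak{X}_{\zeta}$ inherits from $\zeta$, one studies how the $\boldsymbol{W}$- and $\boldsymbol{W}'$-supertiles (which, under $L^{-1}$, are copies of $\boldsymbol{W},\boldsymbol{W}'$) overlay level by level, and shows that coincidences between a $\boldsymbol{W}$-supertile and a $\boldsymbol{W}'$-supertile can be propagated outward to arbitrarily large balls. A compactness extraction (the spaces are compact by the f.l.c. theorem, and $\zeta$ is continuous) then upgrades the locally constant displacement to a \emph{globally} constant one, i.e. $\boldsymbol{W}'=\boldsymbol{W}+\bv$ with $\bv\neq\bzero$. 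Applying $\zeta$ and using $\zeta(\boldsymbol{W}+\bv)=\zeta(\boldsymbol{W})+L\bv$ exhibits $L\bv$ as a nonzero period of $\boldsymbol{Z}\in\mathfrak{X}_{\zeta}$, contradicting aperiodicity. I expect precisely this propagation-of-coincidences — guaranteeing that local agreement spreads rather than merely recurring, so that the displacement is genuinely constant — to be the main obstacle; in dimension $d>1$ it is substantially harder than in the one-dimensional symbolic setting, since the supertile boundaries are $(d-1)$-dimensional and must be controlled geometrically rather than combinatorially.

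For the final ``in particular'' clause, one direction is the theorem just proved combined with minimality: minimality forces all tilings of $\mathfrak{X}_{\zeta}$ to share one period group $\Gamma$, so if a single $\bX$ is aperiodic then $\Gamma=\{\bzero\}$, every tiling is aperiodic, and $\zeta$ is injective. For the converse, suppose every tiling is periodic, so $\Gamma\neq\{\bzero\}$. From $\zeta(\boldsymbol{W}-\bu)=\zeta(\boldsymbol{W})-L\bu$ one gets $L\Gamma\subseteq\Gamma$, and since $L$ is expansive its restriction to the $L$-invariant subspace $\mathrm{span}\,\Gamma$ has determinant of modulus exceeding $1$, giving a proper inclusion $L\Gamma\subsetneq\Gamma$. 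Choosing $\bv\in\Gamma\setminus L\Gamma$ we have $L^{-1}\bv\notin\Gamma$, so $\boldsymbol{W}':=\boldsymbol{W}-L^{-1}\bv\neq\boldsymbol{W}$ while $\zeta(\boldsymbol{W}')=\zeta(\boldsymbol{W})-\bv=\zeta(\boldsymbol{W})$, showing $\zeta$ is not injective.
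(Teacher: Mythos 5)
The paper does not actually prove this statement: it is quoted from Solomyak's unique-composition theorem (\cite{SolUniq}, Theorem 1.1), so there is no in-paper argument to compare yours against. Judged on its own terms, your proposal gets the framing right: the identity $\Upsilon_1=\zeta\circ L^{-1}$ correctly reduces injectivity of the subdivision map to injectivity of $\zeta$ on $\mathfrak{X}_{\zeta}$, and your treatment of the ``in particular'' clause is complete and correct in both directions --- in particular the converse, where you use $L\Gamma\subseteq\Gamma$, expansivity to force $[\Gamma:L\Gamma]\geq 2$, and a vector $\bv\in\Gamma\setminus L\Gamma$ to manufacture two distinct preimages, is a genuine proof.

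The gap is in the hard direction, and it sits exactly where the entire content of Solomyak's theorem lives. Your plan is: if $\zeta(\boldsymbol{W})=\zeta(\boldsymbol{W}')$ with $\boldsymbol{W}\neq\boldsymbol{W}'$, then (a) the disagreement between the two supertile decompositions recurs with bounded gaps and realizes a fixed nonzero displacement $\bv$ along a subsequence, and (b) this local displacement upgrades to the global identity $\boldsymbol{W}'=\boldsymbol{W}+\bv$, whence $L\bv$ is a period. Step (b) is asserted, not proved --- you yourself flag ``propagation of coincidences'' as the main obstacle --- and it is not a routine compactness extraction: passing to a limit yields tilings agreeing up to translation on large balls only if you have already shown that agreement of the two hierarchies on a patch forces agreement on a neighborhood of it, which is precisely the recognizability statement being proved. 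It is also not clear that ``two distinct preimages are exact global translates of one another'' is the right intermediate target: a priori the two decompositions could differ in a non-translational way (different supertile types over different regions), and ruling that out requires the hierarchical and density analysis that constitutes Solomyak's actual argument. So as a proof the proposal is incomplete at its central step, even though the endgame (a nonzero displacement would be blown up by $L$ into a period, contradicting aperiodicity) correctly explains why aperiodicity is the right hypothesis.
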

\begin{definition}
	A tiling $\bX$ of $\R^d$ is \textbf{repetitive} if for any sub-patch $\boldsymbol{P}$ of $\bX$ there exists $R>0$ such that for any ball $B(\bx,R)$ of $\R^d$ of radius $R$, there exists a translate of $\boldsymbol{P}$ contained in $B(\bx,R)$.
\end{definition}

\begin{prop}[\cite{praggastis1999numeration}, Proposition 1.2]\label{prop:rep}
	Let $\zeta$ be a primitive substitution and such that all tilings $\mathfrak{X_{\zeta}}$ have f.l.c. Then, all tilings in $\mathfrak{X_{\zeta}}$ are repetitive.
\end{prop}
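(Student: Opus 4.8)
The plan is to combine two ingredients: primitivity, which forces every sufficiently high-order supertile to contain a copy of any prescribed patch, and the definition of $\mathfrak{X}_{\zeta}$, which forces a complete high-order supertile to appear near every point of every $\bX\in\mathfrak{X}_{\zeta}$. I would fix $\bX\in\mathfrak{X}_{\zeta}$ and a sub-patch $\boldsymbol{P}$ of $\bX$. By the definition of the tiling space, $\boldsymbol{P}$ is a translate of a sub-patch of $\zeta^{N}(T_{j})$ for some $N\in\N$ and $1\le j\le m$. The goal is then to produce a radius $R$, depending only on $N$ (hence only on $\boldsymbol{P}$), such that every ball $B(\bx,R)$ contains a translate of $\boldsymbol{P}$.

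First I would extract the consequence of primitivity. Since $\mathcal{S}_{\zeta}$ is primitive there is $n_{0}\ge1$ with all entries of $\mathcal{S}_{\zeta}^{n_{0}}$ positive; as $(\mathcal{S}_{\zeta}^{n_{0}})_{ij}$ counts the translates of $T_{i}$ inside $\zeta^{n_{0}}(T_{j})$, every supertile $\zeta^{n_{0}}(T_{k})$ contains a translate of $T_{j}$. Applying $\zeta^{N}$ and using $\zeta^{N+n_{0}}(T_{k})=\zeta^{N}(\zeta^{n_{0}}(T_{k}))$, I conclude that each level-$(N+n_{0})$ supertile $\zeta^{N+n_{0}}(T_{k})$ contains a translate of $\zeta^{N}(T_{j})$, and therefore a translate of $\boldsymbol{P}$. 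Thus it suffices to show that a complete level-$(N+n_{0})$ supertile appears within bounded distance of every point of $\bX$.

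For the covering step, set $D=\max_{k}\operatorname{diam}(\operatorname{supp}\zeta^{N+n_{0}}(T_{k}))$ and $R=2D$. Given $\bx\in\R^{d}$, the patch $]B(\bx,R)[^{\bX}$ is finite (local finiteness, guaranteed by the f.l.c.\ framework), so by the definition of $\mathfrak{X}_{\zeta}$ it is a translate of a sub-patch $\boldsymbol{Q}$ of some $\zeta^{n}(T_{\ell})$; using primitivity once more to raise the order (each $\zeta^{n}(T_{\ell})$ appears inside $\zeta^{n+n_{0}}(T_{p})$ for every $p$), I may assume $n\ge N+n_{0}$. Now $\zeta^{n}(T_{\ell})=\zeta^{N+n_{0}}(\zeta^{\,n-N-n_{0}}(T_{\ell}))$ is an exact non-overlapping union of level-$(N+n_{0})$ supertiles. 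Under the translation identifying $]B(\bx,R)[^{\bX}$ with $\boldsymbol{Q}$, the center $\bx$ maps to a point $\bx'$ lying in some tile of $\boldsymbol{Q}$, hence in a unique supertile $\boldsymbol{S}$ of this decomposition; since $\boldsymbol{S}$ has diameter at most $D$ and contains $\bx'$, its support lies in $\overline{B(\bx',D)}\subseteq B(\bx',R)$, so every tile of $\boldsymbol{S}$ already belongs to $\boldsymbol{Q}$. Translating back, $\boldsymbol{S}$ yields a genuine level-$(N+n_{0})$ supertile of $\bX$ contained in $B(\bx,R)$, which by the previous step contains a translate of $\boldsymbol{P}$. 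This is exactly repetitivity.

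The main obstacle is the bookkeeping in the covering step: one must make sure that the supertile picked out around the center $\bx'$ is \emph{complete}, i.e.\ that none of its tiles are truncated by passing to the finite patch $\boldsymbol{Q}$, and that $\bx'$ genuinely sits in the region covered by $\boldsymbol{Q}$ rather than near the boundary of $\zeta^{n}(T_{\ell})$. Choosing $R=2D$ (comfortably larger than every supertile diameter) and checking that the tile of $\bX$ containing $\bx$ already lies in $]B(\bx,R)[^{\bX}$ handles this, but the translation and boundary accounting is the only delicate part; the rest is a direct consequence of primitivity together with the hierarchical structure built into the definition of $\mathfrak{X}_{\zeta}$.
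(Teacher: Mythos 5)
The paper gives no proof of this proposition — it is quoted directly from Praggastis — so there is nothing internal to compare against; your argument is the standard one for repetitivity of primitive substitution tilings and is essentially correct. The one step that must be written out is precisely the one you flagged: to see that every tile of the supertile $\boldsymbol{S}$ already lies in $\boldsymbol{Q}$, note that $\boldsymbol{Q}$ covers the ball $B(\bx',R-D_{\max})$ (every point of $B(\bx,R-D_{\max})$ sits in a tile of $\bX$ whose support is contained in $B(\bx,R)$), that this ball contains a neighbourhood of $\mathrm{supp}(\boldsymbol{S})\subseteq\overline{B(\bx',D)}$ once $R$ is taken slightly larger than $2D$ (say $R=2D+D_{\max}$), and that the tiles of $\zeta^{n}(T_{\ell})$ are non-overlapping, so any tile of $\zeta^{n}(T_{\ell})$ whose interior meets the region covered by $\boldsymbol{Q}$ must coincide with a tile of $\boldsymbol{Q}$. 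With that covering-plus-non-overlap observation made explicit, the proof is complete.
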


We finish this subsection recalling the ergodic properties of the tiling systems arising from an aperiodic and primitive substitution. There exists a natural $\phi$-invariant measure $\mu$ on $(\mathfrak{X}_\zeta, \mathcal{B}(\mathfrak{X}_\zeta))$ defined by the frequency of patches on any tiling of the space. In fact, this is the only invariant measure and we summarize this fact in the next result.
\begin{theorem}[see \cite{lee2003consequences}, Theorem 4.1]
	Let $\zeta$ be an aperiodic and primitive substitution. The system $(\mathfrak{X}_\zeta,\phi)$ is uniquely ergodic, and its unique ergodic measure class is given by $\mu$.
\end{theorem}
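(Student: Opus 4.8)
The plan is to derive unique ergodicity from the existence of \emph{uniform patch frequencies}. For the amenable group $\R^d$, the system $(\mathfrak{X}_\zeta,\phi)$ is uniquely ergodic precisely when, for every $f\in C(\mathfrak{X}_\zeta)$, the averages
\[
A_R f(\bX)=\frac{1}{\mathrm{vol}\,B(\bzero,R)}\int_{B(\bzero,R)} f(\phi_{\bs}(\bX))\,d\bs
\]
converge as $R\to\infty$, \emph{uniformly in} $\bX$, to a constant. By the definition of $\mathrm{Dist}$, the functions that depend only on the patch occupying a fixed bounded neighborhood of the origin form a point-separating subalgebra of $C(\mathfrak{X}_\zeta)$, which is dense by Stone--Weierstrass; so it suffices to establish the uniform convergence for the indicator-type observables attached to a fixed patch $\boldsymbol{P}$. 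Concretely, writing $N_{\boldsymbol{P}}(A,\bX)$ for the number of translates of $\boldsymbol{P}$ that occur in $\bX$ and are supported inside a region $A$, the goal reduces to showing that the frequency
\[
\mathrm{freq}(\boldsymbol{P})=\lim_{R\to\infty}\frac{N_{\boldsymbol{P}}(B(\bx,R),\bX)}{\mathrm{vol}\,B(\bx,R)}
\]
exists, is independent of $\bX\in\mathfrak{X}_\zeta$ and of the center $\bx$, and converges uniformly in both.

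First I would construct these frequencies through the renormalization furnished by $\zeta$. Since $\zeta$ is primitive, $\mathcal{S}_\zeta$ is a primitive nonnegative integer matrix, so by the Perron--Frobenius theorem it has a simple positive leading eigenvalue $\theta$ with strictly positive left and right eigenvectors. Because the substitution scales volumes by $|\det L|$, the vector of prototile volumes is a positive left eigenvector for the eigenvalue $|\det L|$, forcing $\theta=|\det L|$; the normalized right eigenvector then records the asymptotic proportions of the tile types. For a general patch $\boldsymbol{P}$, I would count the occurrences of $\boldsymbol{P}$ inside a level-$n$ supertile $\zeta^n(T_j)$. Primitivity guarantees that every tile type, and hence (after finitely many additional substitution steps) every patch that occurs at all, appears in every supertile of sufficiently high order; the Perron--Frobenius spectral gap then forces the ratio $N_{\boldsymbol{P}}\big(\mathrm{supp}\,\zeta^n(T_j),\bX\big)/\theta^{n}$ to converge to a limit independent of $j$, which I would take as $\mathrm{freq}(\boldsymbol{P})$.

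The hard part will be upgrading this to \emph{uniform} convergence of $A_R f$, i.e.\ uniformity in the tiling $\bX$ and in the center $\bx$. Given $R$, I would choose an order $n=n(R)$ and, using the unique recomposition of a tiling into its order-$n$ supertiles guaranteed by Theorem \ref{thm:Uniq}, partition $]B(\bx,R)[^{\bX}$ into a disjoint family of order-$n$ supertiles lying in the interior together with a boundary layer of supertiles meeting $\partial B(\bx,R)$. On each interior supertile the count of $\boldsymbol{P}$ is, by the previous step, proportional to its volume up to an error uniform in $j$ and $\bX$; summing gives the main term $\mathrm{freq}(\boldsymbol{P})\cdot\mathrm{vol}\,B(\bx,R)$. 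The boundary layer has volume $O\!\big(R^{d-1}\,\mathrm{diam}(\zeta^n(T_j))\big)=o(R^d)$ provided $n(R)$ grows slowly enough, so (using the van Hove property of balls and finite local complexity to bound the number and geometry of the boundary supertiles) its contribution to the average vanishes uniformly in $\bX$ and $\bx$. The crux is to balance the two errors --- the Perron--Frobenius rate inside the supertiles against the boundary contribution --- by a single choice of $n(R)$; here repetitivity (Proposition \ref{prop:rep}) ensures that occurrences of $\boldsymbol{P}$ appear with bounded gaps and are therefore not systematically undercounted.

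Finally, uniform convergence of $A_R f$ to the constant for all patch observables extends by density to all $f\in C(\mathfrak{X}_\zeta)$, which is exactly unique ergodicity. The $\phi$-invariant mean $f\mapsto \lim_{R\to\infty}A_R f$ is represented by a unique invariant Borel probability measure, and evaluating it on patch indicators identifies this measure with the frequency measure $\mu$.
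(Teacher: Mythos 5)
The paper does not actually prove this statement; it is quoted from \cite{lee2003consequences} (see also Theorem 3.1 of \cite{solomyak1997dynamics}), and the proof there is precisely the uniform-patch-frequency argument you outline: Perron--Frobenius applied to $\mathcal{S}_\zeta$ (with the prototile volumes as the positive left eigenvector identifying $\theta=\det L$) yields patch frequencies per unit volume, and a decomposition of large balls into order-$n$ supertiles plus a negligible boundary layer upgrades this to uniform convergence of the ergodic averages, hence unique ergodicity. Your sketch therefore follows essentially the same route as the cited source; the one slip worth correcting is that $N_{\boldsymbol{P}}(\mathrm{supp}\,\zeta^n(T_j),\bX)/\theta^n$ converges to $\mathrm{freq}(\boldsymbol{P})\cdot\mathrm{Vol}(\mathrm{supp}(T_j))$, which does depend on $j$ --- it is the count normalized by the volume of the supertile, not by $\theta^n$, that is independent of $j$, as your later phrase ``proportional to its volume'' implicitly acknowledges.
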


\subsection{Spectral theory}
The spectral theory of a second-countable locally compact abelian group acting on a compact metric space is well-understood from the theory of representations, see for example \cite{FKSpectral}. Here we content ourselves with only recall the spectral theorem in our particular case. We use the notation $e[x] = e^{2\pi ix}$, for $x\in\R$; and we denote by $\bx\cdot\boldsymbol{y}$ the pairing in $\C^d$ given by $\bx\cdot\boldsymbol{y} = x_1y_1 + \dots + x_dy_d$, for $\bx,\boldsymbol{y}\in\C^d$
\begin{theorem}[see \cite{einsiedler2017functional}, Theorem 9.58]
	Let $f,g \in L^2(\mathfrak{X}_\zeta,\mu)$. There is a Borel $\sigma$-finite measure $\sigma_{f,g}$ supported in $\R^d$ called \textbf{spectral measure} associated to $f,g$ (we denote also $\sigma_f = \sigma_{f,f}$) defined by its Fourier transform 
	\[
	\widehat{\sigma}_{f,g}({\bs}) = \int_{\R^d} e[-\om\cdot {\bs}] d\sigma_{f,g}(\om) = \langle f \circ \phi_{{\bs}}, g \rangle_{L^2(\mathfrak{X}_\zeta,\mu)},
	\]
	for all ${\bs} \in \R^d$.
\end{theorem}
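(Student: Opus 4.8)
The plan is to recognize $\sigma_{f,g}$ as the scalar spectral measures of the Koopman representation attached to the flow $\phi$ and to read the asserted Fourier-transform identity directly off the spectral theorem for unitary representations of $\R^d$. First I would set $H = L^2(\mathfrak{X}_\zeta,\mu)$ and define the \emph{Koopman operators} $U_{\bs}f = f\circ\phi_{\bs}$. Invariance of $\mu$ under $\phi$ makes every $U_{\bs}$ unitary with $U_{\bs}^{*} = U_{-\bs}$; the identity $\phi_{\bs}\circ\phi_{\boldsymbol{t}} = \phi_{\bs+\boldsymbol{t}}$ makes $\bs\mapsto U_{\bs}$ a homomorphism; and the continuity of $\phi$ furnished by the earlier theorem, combined with the density of $C(\mathfrak{X}_\zeta)$ in $H$ and the uniform bound $\norm{U_{\bs}} = 1$, upgrades this to a \emph{strongly continuous} unitary representation of the locally compact abelian group $\R^d$ (for continuous $f$ one has uniform convergence $f\circ\phi_{\bs}\to f\circ\phi_{\bs_0}$ on the compact space $\mathfrak{X}_\zeta$, and a standard approximation argument handles general $f$).

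Next I would invoke the spectral theorem for such representations (the Stone--Naimark--Ambrose--Godement theorem): there is a unique regular projection-valued Borel measure $E$ on the Pontryagin dual of $\R^d$, which I identify with $\R^d$ through the pairing $\om\mapsto(\bs\mapsto e[-\om\cdot\bs])$, such that
\[
U_{\bs} = \int_{\R^d} e[-\om\cdot\bs]\,dE(\om),\qquad \bs\in\R^d.
\]
For $f,g\in H$ I would then set $\sigma_{f,g}(B) = \langle E(B)f, g\rangle$. Since $E(B)$ is an orthogonal projection, $\sigma_f(B) = \langle E(B)f,f\rangle = \norm{E(B)f}^2\geq 0$, so $\sigma_f$ is a positive measure of total mass $\norm{f}^2$, and in general $\sigma_{f,g}$ is a complex Borel measure of total variation at most $\norm{f}\,\norm{g}$; in particular both are finite, hence $\sigma$-finite. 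Pairing the spectral resolution above against $f$ and $g$ and using $U_{\bs}f = f\circ\phi_{\bs}$ yields
\[
\langle f\circ\phi_{\bs}, g\rangle = \int_{\R^d} e[-\om\cdot\bs]\,d\langle E(\om)f,g\rangle = \int_{\R^d} e[-\om\cdot\bs]\,d\sigma_{f,g}(\om) = \widehat{\sigma}_{f,g}(\bs),
\]
which is exactly the claimed identity.

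The only substantial analytic ingredient is the spectral theorem itself; the remaining steps are bookkeeping, so the point I would check most carefully is the strong continuity of $\bs\mapsto U_{\bs}$, which is precisely where the continuity of the translation action enters. If one prefers to avoid quoting the representation-theoretic spectral theorem, I would instead prove the diagonal case first: for fixed $f$ the map $\bs\mapsto\langle U_{\bs}f, f\rangle$ is continuous and positive definite, because for any $\bs_1,\dots,\bs_n\in\R^d$ and $c_1,\dots,c_n\in\C$,
\[
\sum_{j,k} c_j\overline{c_k}\,\langle U_{\bs_j-\bs_k}f, f\rangle = \left\|\,\sum_{k} \overline{c_k}\,U_{\bs_k}^{*}f\,\right\|^2\geq 0,
\]
so Bochner's theorem produces the finite positive measure $\sigma_f$, and $\sigma_{f,g}$ is then recovered by polarization from $\sigma_{f\pm g}$ and $\sigma_{f\pm ig}$. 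In this variant the delicate point is transferring the sesquilinear polarization identity from the correlation functions to the measures themselves, which relies on the injectivity of the Fourier transform on finite complex measures to identify the resulting linear combination as the unique measure with the prescribed Fourier transform.
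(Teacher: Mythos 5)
The paper does not prove this statement—it is quoted directly from Einsiedler--Ward (Theorem 9.58)—and your argument is precisely the standard proof that such a reference supplies: pass to the Koopman representation $U_{\bs}f = f\circ\phi_{\bs}$, verify unitarity and strong continuity, and apply the SNAG/Stone spectral theorem (or, equivalently, Bochner plus polarization) with the dual of $\R^d$ identified via $\om\mapsto(\bs\mapsto e[-\om\cdot\bs])$ so that the sign convention matches. Your proposal is correct, including the positive-definiteness computation and the remark that injectivity of the Fourier transform on finite measures pins down $\sigma_{f,g}$ in the polarization variant.
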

Recall that for a self-affine tiling $\bX_0$, the boundaries of all tiles together have zero Lebesgue measure (\cite{praggastis1999numeration}, Proposition 1.1). Consider the next partition (in measure) of the tiling space.
\small{
\begin{align*}
\mathfrak{X}_{\zeta} &= \bigcup^m_{i=1} \mathfrak{X}_{T_i}, \text{ where}
\\
\mathfrak{X}_{T_i} &= \left\{ \: \phi_{{\bs}}(\bX) \in \mathfrak{X}_{\zeta} \:|\: T_i \in \bX, \: {\bs} \in \text{supp}(T_i) \right\}.
\end{align*}}

A non-trivial function $f \in L^2(\mathfrak{X}_{\zeta},\mu)$ is an \textbf{eigenfunction} if there is some $\om\in\R^d$ such that for every ${\bs}\in\R^d$ 
\[
f(\phi_{{\bs}}(\bX)) = e[{\bs}\cdot\om]f(\bX) \quad \mu\text{-a.s. for } \bX\in\mathfrak{X}_{\zeta}.
\]
In this situation, $\om$ is called an \textbf{eigenvalue} of the system $(\mathfrak{X}_{\zeta},\mu,\phi)$. If the constant functions are the only eigenfunctions, the system is said to be \textbf{weakly-mixing}. Equivalently, in terms of spectral measures, the system is weakly-mixing if and only if no spectral measure has an atom except from the trivial one at $\om = \bf{0}$. In Section 3, we recall some conditions over the expansion that ensure the system is weakly-mixing.

\subsection{Twisted Birkhoff integrals}
Here we recall the main constructions that will be used in the next section to find the modulus of continuity of the spectral measures. These concepts are multi-dimensional analogs of the ones found in \cite{bufetov2014modulus}. Consider in what follows a substitution $\zeta$ with expansion $L$ over the alphabet $\A = \{T_1,\dots,T_m\}$. We denote by $C_r$ the cube $[-r,r]^d\subseteq\R^d$, for $r>0$; and more generally by $C^{\om}_r$ the set $[\omega_1-r,\omega_1+r]\times\dots\times[\omega_d-r,\omega_d+r]$, for $\om = (\omega_1,\dots,\omega_d)\in\R^d$.
\begin{definition}
	Let $f \in L^2(\mathfrak{X}_\zeta,\mu)$, $\bX\in \mathfrak{X}_\zeta$ and $\om \in \R^d$. The \textbf{twisted Birkhoff integral} is defined as
	\[
	S^f_R(\bX, \om) = \int_{C_R} e[\om \cdot {\bs}] f \circ \phi_{{\bs}}(\bX) d{\bs}.
	\]
	We will also consider the quadratic mean of $S^f_R(\bX, \om)$:
	\[
	G_R(f,\om) = \dfrac{1}{R^d}\norm{S^f_R(\cdot, \om)}^2_{L^2(\mathfrak{X}_\zeta,\mu)} = \dfrac{1}{R^d} \int_{\mathfrak{X}_{\zeta}} S^f_R(\bY, \om)\overline{S^f_R(\bY, \om)} d\mu(\bY).
	\]
\end{definition}

\subsection{Relation between $\sigma_f$ and $S^f_R(\bX, \om)$}
The following lemma asserts that uniformly controlling the growth of twisted Birkhoff sums across the space enables a control of the spectral measure's growth. We denote the cube $[\omega_1-r,\omega_1+r]\times\dots\times[\omega_d-r,\omega_d+r] \subseteq \R^d$ by $C^{\om}_r$.
\begin{lemma}\label{lemmma: spectralmeasure}
	Let $f \in L^2(\mathfrak{X}_\zeta,\mu)$ and $\om = (\omega_1,\dots,\omega_d) \in \R^d$. Let $\Omega:[0,1) \longrightarrow \R$ be a non-decreasing function with $\Omega(0)=0$ and such that, for some constants $C$ and $R_0$, $G_R(f,\om) \leq CR^d\Omega(1/R)$ for every $R\geq R_0$. Then for all $r \leq 1/2R_0$, we have the bound
	\[
	\sigma_f(C^{\om}_r) \leq \dfrac{\pi^{2d} C}{4^d}\Omega(2r).
	\]
\end{lemma}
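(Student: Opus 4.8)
The plan is to realize $G_R(f,\om)$ as the integral of a nonnegative Fej\'er-type kernel against the spectral measure $\sigma_f$, and then to extract an upper bound for $\sigma_f(C^{\om}_r)$ by discarding everything outside the cube $C^{\om}_r$ and bounding the kernel from below there. The hypothesis $G_R(f,\om)\le CR^d\Omega(1/R)$ enters at the very end, after $R$ has been calibrated to $r$.

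First I would unfold the definition of $G_R$. Expanding the squared $L^2$-norm of $S^f_R(\cdot,\om)=\int_{C_R}e[\om\cdot\bs]f\circ\phi_{\bs}\,d\bs$ and applying Fubini gives
\[
\norm{S^f_R(\cdot,\om)}^2_{L^2}=\int_{C_R}\int_{C_R}e[\om\cdot(\bs-\boldsymbol{t})]\,\langle f\circ\phi_{\bs},f\circ\phi_{\boldsymbol{t}}\rangle_{L^2}\,d\bs\,d\boldsymbol{t}.
\]
Since the Koopman operators $f\mapsto f\circ\phi_{\bs}$ are unitary and form a representation of $\R^d$, the inner product equals $\widehat{\sigma}_f(\bs-\boldsymbol{t})$; inserting $\widehat{\sigma}_f(\bs-\boldsymbol{t})=\int_{\R^d}e[-\om'\cdot(\bs-\boldsymbol{t})]\,d\sigma_f(\om')$ and using Fubini again, the double integral over $C_R$ factorizes coordinate-wise into $\bigl|\int_{-R}^{R}e[(\omega_j-\omega'_j)s_j]\,ds_j\bigr|^2$. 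This produces the representation
\[
G_R(f,\om)=\int_{\R^d}\ \prod_{j=1}^{d}\frac{\sin^2\!\big(2\pi(\omega_j-\omega'_j)R\big)}{R\,\pi^2(\omega_j-\omega'_j)^2}\ d\sigma_f(\om'),
\]
an integral of a positive kernel against $\sigma_f$. The two interchanges of integration are legitimate because the integrand is bounded and integrable over the compact product $C_R\times C_R$ and because $G_R(f,\om)$ is finite, which in turn shows the displayed kernel is $\sigma_f$-integrable.

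Next I would localize. As both the kernel and $\sigma_f$ are nonnegative, I may restrict the integral to $C^{\om}_r$, where $|\omega_j-\omega'_j|\le r$ for every $j$. The elementary one-dimensional estimate $\tfrac{\sin^2 x}{x^2}\ge \tfrac{4}{\pi^2}$ for $|x|\le \pi/2$ gives a uniform positive lower bound for each factor, valid as long as $R$ is chosen so that the arguments remain inside the central lobe over the whole cube. Calibrating $R=(2r)^{-1}$ matches $C^{\om}_r$ to the region of validity and lets me pull the lower bound out of the integral, yielding $G_R(f,\om)\ge (4R/\pi^2)^{d}\,\sigma_f(C^{\om}_r)$. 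With this same $R=(2r)^{-1}$ the assumption $r\le 1/(2R_0)$ is exactly $R\ge R_0$, so the hypothesis applies and $G_R(f,\om)\le CR^d\Omega(1/R)=CR^d\Omega(2r)$. Combining the two inequalities and cancelling $R^d$ gives $\sigma_f(C^{\om}_r)\le \tfrac{\pi^{2d}C}{4^d}\Omega(2r)$, as claimed, the final constant being the $d$-th power of the sharp one-dimensional constant $\pi^2/4$.

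The main obstacle is precisely this localization: the Fej\'er kernel vanishes at its first zeros, so a pointwise lower bound on $C^{\om}_r$ is available only when the cube sits inside the central lobe of the kernel. This is what forces the relation $R\asymp 1/r$ and thereby dictates both the admissible range $r\le 1/(2R_0)$ and the shape of the constant; the verification that the chosen $R$ keeps every coordinate factor above $4/\pi^2$ is the one genuinely delicate point. The remaining care lies in justifying the two applications of Fubini against the $\sigma$-finite measure $\sigma_f$, which is guaranteed by the finiteness of $G_R(f,\om)$.
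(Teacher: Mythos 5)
Your overall route is identical to the paper's: expand $G_R(f,\om)$ via the spectral theorem into an integral of a product Fej\'er-type kernel against $\sigma_f$, then bound the kernel from below on a cube after calibrating $R$ to $r$. The trouble is in the localization step, which you yourself single out as the delicate point. Your kernel factor is $\sin^2\bigl(2\pi(\omega_j-\omega'_j)R\bigr)/\bigl(R\pi^2(\omega_j-\omega'_j)^2\bigr)$ --- which is indeed the correct computation for $C_R=[-R,R]^d$ --- and its first zeros occur at $\abs{\omega_j-\omega'_j}=1/(2R)$. With your calibration $R=(2r)^{-1}$ the cube $C^{\om}_r$ extends exactly to those zeros: the arguments $x=2\pi(\omega_j-\omega'_j)R$ sweep $[-\pi,\pi]$ rather than $[-\pi/2,\pi/2]$, so the estimate $\sin^2x/x^2\ge 4/\pi^2$ is unavailable on half of each coordinate range and the infimum of the kernel over $C^{\om}_r$ is $0$. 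Hence $G_R(f,\om)\ge(4R/\pi^2)^d\,\sigma_f(C^{\om}_r)$ cannot be extracted by pulling a pointwise lower bound out of the integral, and your chain of inequalities does not close. Moreover, even where the central-lobe bound does apply, your own kernel gives the per-factor constant $16R/\pi^2$, not $4R/\pi^2$; the numbers you wrote down belong to the normalization $\mathfrak{F}_R(y)=\frac{1}{R}\left[\sin(\pi Ry)/(\pi y)\right]^2$ that the paper's proof uses (for which $r=1/(2R)$ and the factor $4R/\pi^2$ are mutually consistent), and they cannot be combined with the kernel you actually derived.

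The repair is routine: restrict to $\abs{\omega_j-\omega'_j}\le 1/(4R)$, where each factor is at least $16R/\pi^2$, and calibrate $R=(4r)^{-1}$; this yields $\sigma_f(C^{\om}_r)\le \frac{\pi^{2d}C}{16^d}\,\Omega(4r)$ for $r\le 1/(4R_0)$, which has the same shape as the statement up to the values of the absolute constants and is just as usable in every application (where $\Omega$ is a power of $r$ or of $\log(1/r)^{-1}$). But as written, the inequality $G_R(f,\om)\ge(4R/\pi^2)^d\,\sigma_f(C^{\om}_r)$ is not justified by your argument, so the specific constants and the stated range $r\le 1/(2R_0)$ are not actually established.
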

\begin{proof}
	By the spectral theorem (in the third equality below), we have
	{\small
		\begin{align*}{ \small}
		G_R(f,\om) &= \dfrac{1}{R^d} \langle S^f_R(\cdot,\om),S^f_R(\cdot,\om)\rangle_{L^2(\mathfrak{X}_\zeta,\mu)}  
		\\
		&= \dfrac{1}{R^d} \int_{C_R} \int_{C_R} e[\om \cdot ({\bs}-\boldsymbol{t})] \langle f \circ \phi_{{\bs}-\boldsymbol{t}},f \rangle_{L^2(\mathfrak{X}_\zeta,\mu)} d{\bs}\:d\boldsymbol{t}
		\\
		&= \dfrac{1}{R^d} \int_{C_R} \int_{C_R} e[\om \cdot ({\bs}-\boldsymbol{t})] \int_{\R^d} e[-\boldsymbol{\lambda}\cdot({\bs}-\boldsymbol{t})] d\sigma_f(\boldsymbol{\lambda}) d{\bs}\:d\boldsymbol{t}
		\\
		&= \int_{\R^d} \left[ \int_{C_R} \dfrac{1}{R^d} \int_{C_R} e[(\om-\boldsymbol{\lambda}) \cdot ({\bs}-\boldsymbol{t})] d{\bs}\:d\boldsymbol{t} \right] d\sigma_f(\boldsymbol{\lambda})
		\\
		&= \int_{\R^d} \mathfrak{F}^d_R(\omega_1 - \lambda_1,\dots,\omega_d - \lambda_d) d\sigma_f(\boldsymbol{\lambda}),
		\end{align*}
	}where $\mathfrak{F}^d_R(\omega_1 - \lambda_1,\dots,\omega_d - \lambda_d) = \prod^d_{i=1} \mathfrak{F}_R(\omega_i - \lambda_i)$ is the $d$-dimensional Fej\'er kernel and $\mathfrak{F}_R(y) = \dfrac{1}{R} \left[\dfrac{\sin(\pi R y)}{\pi y}\right]^2$. It is easily shown that if for each $i=1,\dots,d$ holds $\abs{\omega_i - \lambda_i} \leq 1/2R$, then $\mathfrak{F}^d_R(\om-\boldsymbol{\lambda}) \geq (4R)^d/\pi^{2d}$. Finally, we set $r = 1/2R \leq 1/2R_0$ to get
	{ \small
		\begin{align*}
		\sigma_f(C^{\om}_r) &\leq \dfrac{\pi^{2d}}{(4R)^d} \int_{\R^d} \mathfrak{F}^d_R(\omega_1 - \lambda_1,\dots,\omega_d - \lambda_d) d\sigma_f(\boldsymbol{\lambda})
		\\
		&= \dfrac{\pi^{2d}}{(4R)^d} G_R(f,\om) \leq \dfrac{\pi^{2d} C}{4^d} \Omega(2r).
		\end{align*}
	}
\end{proof}
The class of functions we will work with are defined next. 
\begin{definition}
	We will say a function $f:\mathfrak{X_{\zeta}} \longrightarrow \C$ is a \textbf{cylindrical function} if it is integrable and only depends on the tile containing ${\bzero}$. More precisely, there exist integrable $\psi_i: \R^d \longrightarrow \C$, for $i=1,\dots,m$ such that $\text{supp}(\psi_i)\subseteq T_i$, $\psi_i \equiv 0$ on the border of $T_i$ and 
	\small{
	\begin{equation}\label{cylindrical}
		f(\bX) = \sum_{i=1}^m \psi_i({\bs}) \mathds{1}_{\mathfrak{X}_{T_i}}(\bX), \quad \text{if } T_i + {\bs} \in \bX,\, {\bs}\in T_i.
	\end{equation}}We will say $f$ is Lispchitz (resp. smooth) if all $\psi_i$ in (\ref{cylindrical}) are Lispchitz (resp. smooth).
\end{definition}
\begin{remark}
	Cylindrical functions are in fact what are called cylindrical functions of order zero, since we could more generally consider indicator functions of tiles of higher order in (\ref{cylindrical}). As remarked in \cite{bufetov2013limit}, when we consider cylindrical functions of all orders, we obtain a dense subspace of $L^2(\mathfrak{X}_\zeta,\mu)$. This would allow us to study the whole family of spectral measures by approximation, as it is done for example in \cite{JEP_2021__8__279_0}. For simplicity we will only work with cylindrical functions of order zero as in the previous definition.
\end{remark}
We now analyze the twisted integral for a cylindrical function of the form $f=\psi \mathds{1}_{\mathfrak{X}_{T_i}} $. Let $]C^{\bzero}_R[^{\bX} = \{X_1,\dots,X_N\}$ and $\bx_j = \text{punc}(X_j)$. For two tiles $T$ and $S$, define $\delta_{T,S}=1$ if $T$ is a translate of $S$, and $\delta_{T,S}=0$ otherwise. Then,
{ \small
\begin{align}
S^f_R(\bX, \om) &= \int_{\text{supp}(]C_R[^{\bX})} e[\om \cdot {\bs}] f(\phi_{\bs}(\bX)) d{\bs} + \underbrace{\int_{C_R\setminus \text{supp}(]C_R[^{\bX})} e[\om \cdot {\bs}] f(\phi_{\bs}(\bX)) d{\bs}}_{=\Delta(R)} \label{error}
\\
&= \sum^{N}_{j=1} \delta_{T_i,X_j} \int_{\text{supp}(X_j)} e[\om \cdot {\bs}] f(\phi_{\bs}(\bX)) d{\bs} + \Delta(R)
\\
&= \sum^{N}_{j=1} \delta_{T_i,X_j} \int_{\text{supp}(X_j) - \bx_j} e[\om \cdot ({\bs}+\bx_j)]f(\phi_{{\bs} + \bx_j}(\bX))d{\bs} + \Delta(R)
\\
&= \sum^{N}_{j=1} \delta_{T_i,X_j} \int_{\text{supp}(T_i)} e[\om \cdot ({\bs}+\bx_j)]\psi({\bs})d{\bs} + \Delta(R)
\\
&=\int_{\text{supp}(T_i)} e\left[\om \cdot {\bs}\right]\psi({\bs})d{\bs}\: \sum^{N}_{j=1} \delta_{T_i,X_j} e\left[\om \cdot \bx_j\right] + \Delta(R)
\\
&= \widehat{\psi}(\om) \Phi_i \left(\: ]C_R[^{\bX},\om \right) + \Delta(R),\label{fourier}
\end{align} }
where $\widehat{\psi}$ denotes the Fourier transform of $\psi$. 

The sum $\Phi_i \left(\: ]C_R[^{\bX},\om \right) = \sum^{N}_{j=1} \delta_{T_i,X_j} e\left[\om \cdot \bx_j\right]$ is called \textbf{structure factor} (see \cite{bufetov2014modulus} and references therein). It can also be defined for any patch $\boldsymbol{P}$ instead of $]C_R[^{\bX}$. Note that by definition $\abs{\Phi_i(\boldsymbol{P},\om)} = \abs{\Phi_i(\boldsymbol{P}+{\bs},\om)}$, for every ${\bs}\in \R^d$. Next, we examine the behavior of the structure factor for tiles of order $n$, treated as a patch. If $\zeta(T_j) = \bigcup_{1\leq k \leq m} \bigcup_{1\leq l \leq n^{j}_{k}} \left\{ T_k + {{\bs}}^{k}_{l}(j) \right\}$, then
\[
\zeta^n(T_j) = \bigcup_{1\leq k \leq m} \bigcup_{1\leq l \leq n^{j}_{k}} \left\{ \zeta^{n-1} (T_k) + L^{n-1}({{\bs}}^{k}_{l}(j)) \right\},
\]
where $n^j_k = \mathcal{S}_{\zeta}(k,j)$ and $n^j_1 + \dots + n^j_m = N^j = \#\zeta(T_j)$. By definition,
\begin{align}\label{eq:cocycle}
	\Phi_i(\zeta^n(T_j),\om) &= \sum_{k=1}^{m} \sum_{l=1}^{n^j_k} e\left[\om \cdot L^{n-1}({{\bs}}^{k}_{l}(j)) \right] \Phi_i(\zeta^{n-1}(T_k),\om).
\end{align}
Define
\[
\Pi_n(\om) (k,i) = \Phi_i(\zeta^n(T_k),\om),\quad \mathcal{M}_{n-1}(\om) (j,k) = \sum_{l=1}^{n^j_k} e\left[\om \cdot L^{n-1}({{\bs}}^{k}_{l}(j)) \right].
\]
Equality (\ref{eq:cocycle}) can be expressed as $\Pi_n(\om) = \mathcal{M}_{n-1}(\om)\Pi_{n-1}(\om)$. Note also that $\Pi_0(\om) = I_{m}$, the identity matrix of dimension $m$. Thus,
\[
\Pi_n(\om)= \mathcal{M}_{n-1}(\om) \dots \mathcal{M}_1(\om).
\]
\begin{remark}
	The implicit cocycle defined by the matrices $\Pi_n$ is similar to the one discussed in \cite{bufetov2014modulus}. A more general version is defined in \cite{solomyak2024spectral}, but we will not need such generality.
\end{remark}

\section{Preliminary results}
In this section we prove most of the facts we need to prove the regularity of the spectral measures in the case of strongly non-Pisot substitution tilings (definition after Theorem \ref{wmcondition}. For the rest of this section we fix:

\begin{itemize}
	\item $\A = \{ T_1,\dots,T_m \}$ a tile alphabet satisfying $\text{punc}(T_j) = \bf{0}$.
	\item $\zeta$ an aperiodic, primitive and self-affine substitution over $\A$, having as expansion the linear transformation $L:\R^d \longrightarrow \R^d$.
	\item $(\mathfrak{X}_\zeta, \mathcal{B}(\mathfrak{X}_\zeta),\mu,\phi)$ the dynamical system associated to $\zeta$.
\end{itemize}

\subsection{Tower structure}
To obtain estimates on the growth of twisted Birkhoff integrals, we will take advantage of the tower structure of substitution tiling systems, which we recall in the next paragraph. The aperiodicity assumption on the substitution is crucial, because only in that situation we may decompose an arbitrary patch of a tiling as a unique union of tiles of order $\geq 1$.

For a tiling $\bX \in \mathfrak{X}_{\zeta}$ we set $\bX^{(0)} = \bX$ and $\bX^{(k)} = \Upsilon^{-1}_{k-1}\circ\dots\circ\Upsilon^{-1}_{0}(\bX)$. By Theorem \ref{thm:Uniq} we can view $\bX$ as a tiling on the alphabet $L^k\A$, for any $k\in\Z$. Actually, for a patch of $\bX$ we may write $\boldsymbol{P} = \{T \in \mathcal{R}^k(\boldsymbol{P}) \: | \: 0\leq k \leq n \}$, where $n$ is the first integer such that $]\text{supp}(\boldsymbol{P})[^{\bX^{(n+1)}} = \emptyset$, and the sets $\mathcal{R}^k(\boldsymbol{P})$ are inductively defined by
\[
\mathcal{R}^n(\boldsymbol{P}) =\: ]\text{supp}(\boldsymbol{P})[^{\bX^{(n)}} \text{ and } \mathcal{R}^k(\boldsymbol{P}) =\: ]\text{supp}(\boldsymbol{P})[^{\bX^{(k)}}\: \setminus\: ]\text{supp}(\mathcal{R}^{k+1}(\boldsymbol{P}))[^{\bX^{(k)}},
\]
for $0\leq k < n$. Simply put, $\mathcal{R}^k(\boldsymbol{P})$ is the set of order-$k$ tiles within  $\text{supp}(\boldsymbol{P})$ that are not part of any order-$(k+1)$ tile fully contained in $\text{supp}(\boldsymbol{P})$. Denote by $\norm{L}$ the operator norm of $L$ induced by the norm $\norm{\cdot}_\infty$ on $\R^d$, and by $C_R$ the cube $[-R,R]^d$, for $R>0$.

\begin{lemma}\label{thm:Factor}
	Let $f = \mathds{1}_{\mathfrak{X}_{T_i}}$ and $\om \in \R^d$. Assume there exists $1 < \xi < \norm{L}$, and a sequence $(F_{\om}(n))_{n\geq 0}$ such that for every $T_j \in \A$ we have
	\begin{itemize}
		\item $ \dfrac{F_{\om}(n)}{\xi} \leq F_{\om}(n+1) \leq F_{\om}(n)$.
		\item $\abs{\Phi_i(\zeta^n(T_j),\om)} \leq \#\zeta^n(T_j) \: F_{\om}(n)$.\\
	\end{itemize}
	Then for all $\bX \in \mathfrak{X}_{\zeta}$ and $R\geq2$,
	\[
	\abs{S^f_R(\bX, \om)} = \mathcal{O}\left( R^{d}F_{\om}(\floor{\log_{\norm{L}}(R)})\right)  + \mathcal{O}(R^{d-1}).
	\]
\end{lemma}
\begin{proof}
	By Theorem \ref{thm:Uniq} we have $]C_R[^{\bX} = \{T \in \mathcal{R}^k(]C_R[^{\bX}) \: | \: 0\leq k \leq n \}$. Let $D_{\text{max}}$ be the maximum diameter of a prototile's support and $V_{min}$ the minimum volume of these. Denote by $U(\partial C_R, r)$ the set of points which are at a distance less or equal than $r$ from $\partial C_R$, the border of $C_R$. It is easy to see that $C_R \setminus \text{supp}(]C_R[^{\bX}) \subseteq U(\partial C_R, D_{max})$. In terms of the Lebesgue measure of $\R^d$, which we denote by $\mathcal{L}^d$, we can deduce
	{ \small
	\begin{align*}
	\mathcal{L}^d \left( C_R \setminus \text{supp}(]C_R[^{\bX}) \right) \leq \mathcal{L}^{d}(U(\partial C_R, D_{max}))
	&\leq (2R + 2D_{max})^d - (2R - 2D_{max})^d
	\\
	&= \mathcal{O}( R^{d-1} ),
	\end{align*}
	}that is, the error term in (\ref{error}), satisfies $\Delta(R) = \int_{C_R \setminus \; \text{supp}(]C_R[^{\bX})} e[\om\cdot {\bs}] \mathds{1}_{\mathfrak{X}_{T_i}}(\bX-{\bs}) ds = \mathcal{O}(R^{d-1})$. Since $\text{supp}(T_i)$ has finite volume, we may estimate the Birkhoff sum $\abs{S^f_R(\bX, \om)}$ as follows.
	{ \small
	\begin{align*}
	\abs{S^f_R(\bX, \om)} &\leq \left| \int_{\text{supp}(T_i)} e[ \om \cdot {\bs}]d{\bs} \right| \abs{\Phi_i \left(\: ]C_R[^{\bX},\om \right)}  + \abs{\Delta(R)}
	\\
	&= \mathcal{O}\left( \sum^{n}_{k=0} \sum^{\#\mathcal{R}^k (]C_R[^{\bX})}_{l=1} \abs{\Phi_i(\zeta^k (X^k_l) + \bx^k_l, \om)} \right) + \mathcal{O}(R^{d-1}); \: X^k_l \in \A,\, \bx^k_l \in \R^d
	\\
	&= \mathcal{O}\left(\sum^{n}_{k=0} \#\mathcal{R}^k (]C_R[^{\bX}) \max_{j=1,\dots,m} \abs{\Phi_i(\zeta^k (T_j), \om)}\right) + \mathcal{O}(R^{d-1})
	\\
	&= \mathcal{O}\left(\sum^{n}_{k=0} \#\mathcal{R}^k (]C_R[^{\bX}) \max_{j=1,\dots,m} \#\zeta^k( T_j) \: F_{\om}(k) \right) + \mathcal{O}(R^{d-1}),
	\end{align*}
	}where the last inequality follows from the second hypothesis on the sequence $(F_{\om}(n))_{n\geq 0}$. Since $\zeta$ is primitive, the growth of $\max_{j=1,\dots,m} \#\zeta^k (T_j)$ is controlled by the Perron–Frobenius eigenvalue of $\mathcal{S}_{\zeta}$ denoted by $\theta$ (in fact, $\theta= \det(L)$). By the Perron–Frobenius theorem, there exist constants $c_1$, $c_2 >0$ such for every $k\geq0$ 
	\[
	c_1 \theta^k \leq \min_{j=1,\dots,m} \#\zeta^k (T_j) \leq \max_{j=1,\dots,m} \#\zeta^k (T_j) \leq c_2 \theta^k.
	\]
	
	Using the first assumption on $F_{\om}(n)$ we get
	{ \small
	\begin{align*}
	\abs{S^f_R(\bX, \om)} &= \mathcal{O}\left( \: \sum^{n}_{k=0} \#\mathcal{R}^k (]C_R[^{\bX}) \max_{j=1,\dots,m} \#\zeta^k T_j \: F_{\om}(k) \right) + \mathcal{O}(R^{d-1})
	\\
	&= \mathcal{O}\left(\: \sum^{n}_{k=0} \#\mathcal{R}^k (]C_R[^{\bX}) \theta^k \: F_{\om}(k) \right) + \mathcal{O}(R^{d-1})
	\\
	&= \mathcal{O}\left(\: \sum^{n}_{k=0} \#\mathcal{R}^k (]C_R[^{\bX}) \theta^k \: \xi^{n-k} F_{\om}(n) \right) + \mathcal{O}(R^{d-1})
	\\
	&= \mathcal{O}\left( \xi^{n} F_{\om}(n)\: \sum^{n}_{k=0} \#\mathcal{R}^k (]C_R[^{\bX}) \theta^k \xi^{-k}\: \right)  + \mathcal{O}(R^{d-1}).
	\end{align*} 
	}
	
	Next, we repeat the argument used to bound $\Delta(R)$ to deal with $\#\mathcal{R}^k(]C_R[^{\bX})$: since $\text{supp}\left( \mathcal{R}^k \left( ]C_R[^{\bX} \right) \right) \subseteq U(\partial C_R,D_{max}\norm{L}^{k+1})$, then
	\small{
	\begin{align*}
	\#\mathcal{R}^k \left( ]C_R[^{\bX} \right) &\leq \dfrac{ \mathcal{L}^{d}(U(\partial C_R, D_{\text{max}} \norm{L}^{k+1})) }{V_{\text{min}} \theta^{k}} 
	\\
	&\leq \dfrac{1}{V_{min}\theta^{k}} \left[(2R + 2D_{max}\norm{L}^{k+1})^d - (2R - 2D_{max}\norm{L}^{k+1})^d\right].
	\end{align*}}It follows that
	\small{
	\begin{align*}
	\abs{S^f_R&(\bX, \om)} \\
	&= \mathcal{O}\left( \xi^{n} F_{\om}(n)\: \sum^{n}_{k=0} \left[(2R + 2D_{max}\norm{L}^{k+1})^d - (2R - 2D_{max}\norm{L}^{k+1})^d\right] \xi^{-k} \right)
	\\
	& \quad + \mathcal{O}(R^{d-1})
	\\
	&= \mathcal{O}\left( \xi^{n} F_{\om}(n)\: \sum^{n}_{k=0} \dfrac{\norm{L}^{kd}}{\xi^{k}} \left[\left(\dfrac{2R}{\norm{L}^{k}} + 2D_{max}\norm{L}\right)^d - \left(\dfrac{2R}{\norm{L}^{k}} - 2D_{max}\norm{L}\right)^d\right] \right) \\
	& \quad + \mathcal{O}(R^{d-1}) 
	\\
	&= \mathcal{O}\left( \xi^{n} F_{\om}(n)\: \sum^{n}_{k=0} \dfrac{\norm{L}^{kd}}{\xi^{k}} \dfrac{ (2R)^{d-1} }{\norm{L}^{k(d-1)} } \right) + \mathcal{O}(R^{d-1}) \\
	&= \mathcal{O}\left( R^{d-1} F_{\om}(n) \norm{L}^{n} \right) + \mathcal{O}(R^{d-1}).
	\end{align*}}
	
	Let $D_{min}$ be the diameter of the largest ball that fits within any prototile. Then, because there is at least one tile of order $n$ with support contained in $C_R$, necessarily the diameter of $C_R$ exceeds $D_{min}\norm{L}^n$, i.e., $D_{min}\norm{L}^n \leq 2\sqrt{d}R$, which implies $\norm{L}^{n} = \mathcal{O}(R)$. On the other hand, since there are no tiles of order $n+1$ whose support is contained in $C_R$, then necessarily $R \leq D_{max}\norm{L}^{n+1}$. Rearranging this we deduce there exists a constant $C>0$ depending only on the substitution such that $\floor{\log_{\norm{L}}(R) - C} \leq n$. Introducing this in the last inequality above allow us to conclude:
	{ \small
	\begin{align*}
	\abs{S^f_R(\bX, \om)} &= \mathcal{O}\left( R^{d-1} F_{\om}(n) \norm{L}^{n}\right) + \mathcal{O}(R^{d-1})
	\\
	&= \mathcal{O}\left( R^{d-1} F_{\om}(\floor{\log_{\norm{L}}(R) - C}) R  \right) + \mathcal{O}(R^{d-1}) \\
	&= \mathcal{O}\left( R^d F_{\om}(\floor{\log_{\norm{L}}(R)}) \right) + \mathcal{O}(R^{d-1}).
	\end{align*} }
\end{proof}

\subsection{Bound for the structure factor}
The following proposition estimates the structure factor of tiles of order $n$ using the matrix Riesz product from Section 2.5and reducing it to a scalar Riesz product, as in \cite{bufetov2014modulus}. For $\bX \in \mathfrak{X_{\zeta}}$, denote by
\[
\mathfrak{R}(\bX)= \{{\bv}\in\R^d, {\bv}\neq 0 \,|\, \exists\, X_1,X_2 \in \bX: X_1 = X_2 + {\bv} \}
\]
the set of \textbf{return vectors}. Note that $\mathfrak{R}(\bX)$ does not depend on the tiling $\bX$, so we may also denote this set by $\mathfrak{R}(\zeta)$. By the primitivity assumption on the substitution, after passing to a suitable power of it, we may suppose that each patch of the form $\zeta(T_j)$ contains some fixed return vector ${\bv}$. We will call such a vector a \textbf{good return vector}. The (finite) set of good return vectors will be denoted by $\mathfrak{GR}(\zeta)$. Using repetitivity of tilings (Proposition \ref{prop:rep}), we can assume the set of good return vectors contains a basis of $\R^d$. In fact, we have 
\small{
\[
\overline{\left\{ \dfrac{{\bv}}{\norm{{\bv}}}_2 \middle|\, {\bv} \in \mathfrak{R}(\bX) \right\} } = S^{d-1} \text{ (the unit sphere), }
\]
}this is proved in \cite{solomyak1997dynamics}, proof of Theorem 4.4. Let $\mathfrak{B} = \{{\bv}_1,\dots,{\bv}_d\}$ a basis of $\R^d$ made of good return vectors. For every $n\geq 1$ there exists ${\bv}_{k_n} \in \mathfrak{B}$ such that the angle
\begin{equation}\label{rm:infty}
	\angle((L^{\sf T})^n \om, {\bv}_{k_n}) > \delta(\mathfrak{B}) > 0,
\end{equation}
where $\delta = \delta(\mathfrak{B})$ is a positive constant depending only on the basis $\mathfrak{B}$, ergo, on the substitution $\zeta$. In particular, $\abs{(L^{\sf T})^n \om \cdot {\bv}_{k_n}} \to \infty$ as $n\to \infty$.\\

For $\tau \in \R$, we denote the distance from $\tau$ to the nearest integer by $\norm{\tau}_{\R/\Z}$.

\begin{prop}\label{thm:ProdBound}
	There exists $\mathfrak{m} \in (0,1)$ depending only on the substitution $\zeta$ such that for every $T_i,T_j \in \A$ and $\om \in \R^d$:
	\small{
	\[
	\abs{\Phi_i(\zeta^n (T_j),\om)} = \mathcal{O}\left( \#\zeta^n (T_j) \prod^{n-1}_{l=0} 1 - \max_{{\bv}\in\mathfrak{GR}(\zeta) } \mathfrak{m}\norm{\om \cdot L^l{\bv}}^2_{\R/\Z} \right).
	\]}
\end{prop}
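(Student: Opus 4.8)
The plan is to read off $\Phi_i(\zeta^n(T_j),\om)$ as the entry $\Pi_n(\om)(j,i)$ of the cocycle $\Pi_n(\om)=\mathcal{M}_{n-1}(\om)\cdots\mathcal{M}_0(\om)$ and to extract one scalar factor $\bigl(1-\mathfrak{m}\max_{\bv}\norm{\om\cdot L^l\bv}^2_{\R/\Z}\bigr)$ from each $\mathcal{M}_l(\om)$. First I would record the trivial comparison with the substitution matrix: each entry $\mathcal{M}_l(\om)(j,k)$ is a sum of $\mathcal{S}_\zeta(k,j)$ unimodular terms, so $\abs{\mathcal{M}_l(\om)(j,k)}\le \mathcal{S}_\zeta(k,j)$, and iterating gives $\abs{\Phi_i(\zeta^n(T_j),\om)}\le(\mathcal{S}_\zeta^{\mathsf T})^n(j,i)\asymp \theta^n\asymp \#\zeta^n(T_j)$ by Perron–Frobenius (with $\theta=\det L$). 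This is the asserted product with every factor replaced by $1$; the content of the proposition is to gain a definite factor below $1$ at each level.

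The gain comes from the good return vectors. Fix $\bv\in\mathfrak{GR}(\zeta)$ and a supertile $\zeta(T_j)$. By definition $\zeta(T_j)$ contains two tiles of a common type $T_k$ at punctures $\bs$ and $\bs+\bv$. In $\mathcal{M}_l(\om)(j,k)=\sum e[\om\cdot L^l\bs']$ these two contribute $e[\om\cdot L^l\bs]\bigl(1+e[\om\cdot L^l\bv]\bigr)$, while the other $\mathcal{S}_\zeta(k,j)-2$ tiles contribute at most $1$ each in modulus. Using $\abs{1+e[\theta]}=2\abs{\cos\pi\theta}\le 2\bigl(1-c_0\norm{\theta}^2_{\R/\Z}\bigr)$ for a universal $c_0>0$, this yields the entrywise improvement $\abs{\mathcal{M}_l(\om)(j,k)}\le \mathcal{S}_\zeta(k,j)-2c_0\norm{\om\cdot L^l\bv}^2_{\R/\Z}$ on every entry carrying such a pair. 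Selecting at level $l$ the maximiser $\bv^*_l$ of $\norm{\om\cdot L^l\bv}_{\R/\Z}$ over $\mathfrak{GR}(\zeta)$ produces the factor $M_l:=\max_{\bv}\norm{\om\cdot L^l\bv}^2_{\R/\Z}$ of the statement.

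To turn these entrywise gains into a scalar Riesz product I would work in the weighted norm $\norm{\bx}_{\mathbf p}=\sum_j p_j\abs{x_j}$, where $\mathbf p>0$ is the left Perron eigenvector of $\mathcal{S}_\zeta^{\mathsf T}$ normalised by $\mathbf p^{\mathsf T}\mathcal{S}_\zeta^{\mathsf T}=\theta\,\mathbf p^{\mathsf T}$. For $\mathbf u^{(l)}=\Pi_l(\om)\mathbf e_i$ the trivial bound gives $\norm{\mathbf u^{(l+1)}}_{\mathbf p}\le\theta\norm{\mathbf u^{(l)}}_{\mathbf p}$, since $\sum_j p_j\mathcal{S}_\zeta(k,j)=\theta p_k$, and the pairing upgrades this to
\[
\norm{\mathbf u^{(l+1)}}_{\mathbf p}\le \theta\norm{\mathbf u^{(l)}}_{\mathbf p}-2c_0 M_l\sum_{j,k}p_j\abs{u^{(l)}_k}\,\mathds 1\bigl[\zeta(T_j)\text{ carries a type-}k\ \bv^*_l\text{-pair}\bigr].
\]
If the subtracted mass is at least a fixed proportion of $\norm{\mathbf u^{(l)}}_{\mathbf p}$, this reads $\norm{\mathbf u^{(l+1)}}_{\mathbf p}\le\theta\bigl(1-\mathfrak m M_l\bigr)\norm{\mathbf u^{(l)}}_{\mathbf p}$; multiplying the $n$ inequalities and using $\norm{\mathbf u^{(0)}}_{\mathbf p}=p_i$, $p_j\abs{\Phi_i(\zeta^n(T_j),\om)}\le\norm{\mathbf u^{(n)}}_{\mathbf p}$ and $\theta^n\asymp\#\zeta^n(T_j)$ would conclude the proof.

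The step I expect to be the main obstacle is exactly the proportionality claim. The pairing only improves the columns $k$ for which $\zeta(T_j)$ carries a same-type-$k$ pair with the chosen displacement, and a priori the maximiser $\bv^*_l$ connects a single type $\kappa$, whose weight $p_\kappa\abs{u^{(l)}_\kappa}$ can be a negligible fraction of $\norm{\mathbf u^{(l)}}_{\mathbf p}$ (the complex phases may deplete a single coordinate through cancellation). A crude maximum-coordinate bound avoids this but replaces $\theta^n$ by $\bigl(\max_j\#\zeta(T_j)\bigr)^n$, which is strictly larger whenever the column sums of $\mathcal{S}_\zeta$ are unequal and is therefore incompatible with the $\#\zeta^n(T_j)$ prefactor, so it cannot be used. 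To overcome this I would pass to a power of $\zeta$ and combine primitivity with the directional density of return vectors (the closure of $\{\bv/\norm{\bv}_2\}$ is $S^{d-1}$, recalled before the statement) to arrange that every tile type occurs, inside some supertile, as a same-type pair whose displacement is a good return vector realising a fixed fraction of $\max_{\bv}\norm{\om\cdot L^l\bv}_{\R/\Z}$. This spreads the gain over all coordinates, makes the subtracted sum comparable to $\norm{\mathbf u^{(l)}}_{\mathbf p}$, and delivers the uniform multiplicative factor that the iteration requires.
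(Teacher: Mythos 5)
Your setup (reading $\Phi_i(\zeta^n(T_j),\om)$ off the cocycle $\Pi_n(\om)$, the trivial bound $\abs{\mathcal{M}_l(\om)(j,k)}\le\mathcal{S}_\zeta(k,j)$, and the entrywise gain coming from a same-type pair via $\abs{1+e[\tau]}\le 2-c\norm{\tau}^2_{\R/\Z}$) matches the paper. The gap is exactly the step you flagged, and your proposed repair does not close it. A same-type-$k$ pair can only be displaced by a return vector \emph{of type} $k$, and for the fixed finite set $\mathfrak{GR}(\zeta)$ there is no way to arrange that, for every type $k$ and every pair $(\om,l)$, some type-$k$ return vector $\bv$ satisfies $\norm{\om\cdot L^l\bv}_{\R/\Z}\ge c\max_{\bv'\in\mathfrak{GR}(\zeta)}\norm{\om\cdot L^l\bv'}_{\R/\Z}$: the quantities $\norm{\om\cdot L^l\bv}_{\R/\Z}$ for different $\bv$ are not mutually comparable (one can vanish while another equals $1/2$). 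Passing to a power of $\zeta$ and invoking directional density of return vectors gives you more return vectors per type, but never comparability with the maximizer, which may connect a different type. So your weighted-$\ell^1$ scheme can only extract at level $l$ a gain of order $\min_k\max\{\norm{\om\cdot L^l\bv}^2_{\R/\Z}\,:\,\bv\text{ of type }k\}$, which is strictly weaker than the claimed $\max_{\bv\in\mathfrak{GR}(\zeta)}$ and would not feed into the Diophantine argument of Proposition \ref{prop:spectralestimate}.

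The difficulty is an artifact of tracking the actual complex vector $\mathbf{u}^{(l)}=\Pi_l(\om)\mathbf{e}_i$, whose distinguished coordinate may indeed be depleted by cancellation. The paper instead bounds $\Pi_n^{\abs{\cdot}}(\cdot,i)$ entrywise by $\mathcal{M}^{\abs{\cdot}}_{n-1}\cdots\mathcal{M}^{\abs{\cdot}}_0\,\bone$ and propagates the inequality $\mathcal{M}^{\abs{\cdot}}_l\bx\le\bigl(1-\Xi[\bx]\max_{\bv\in\mathfrak{GR}(\zeta)}\norm{\om\cdot L^l\bv}^2_{\R/\Z}\bigr)\mathcal{S}^{\sf T}\bx$ along the explicit positive vectors $\bx=(\mathcal{S}^{\sf T})^l\bone$. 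By Perron--Frobenius all coordinates of $(\mathcal{S}^{\sf T})^l\bone$ are comparable to $\theta^l$, so $\Xi[(\mathcal{S}^{\sf T})^l\bone]$ is bounded below uniformly in $l$ and the single coordinate where the maximizing good return vector acts automatically carries a definite fraction of the mass of the \emph{upper-bound} vector; no spreading of the gain over all tile types is needed, and the prefactor $(\mathcal{S}^{\sf T})^n\bone\asymp\#\zeta^n(T_j)\,\bone$ comes out exactly right. This is the ingredient your ``crude maximum-coordinate bound'' remark was circling: the correct comparison vector is $(\mathcal{S}^{\sf T})^l\bone$ itself, not $\bone$ scaled by $(\max_j\#\zeta(T_j))^l$.
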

\begin{proof}
	This proof follows Proposition 3.5 in \cite{bufetov2014modulus} but differs by selecting an appropriate good return vector at each step. Below we write $\mathcal{S} = \mathcal{S}_\zeta$ and $\om$ is omitted of the matrices $\mathcal{M}_n(\om)$ and $\Pi_n(\om)$. First, we estimate the absolute value of  $\mathcal{M}_n(j,k)$. As in subsection 2.5, we write 
	\small{
	\begin{equation}\label{decomposition}
		\zeta^n(T_j) = \bigcup_{1\leq k \leq m} \bigcup_{1\leq l \leq n^{j}_{k}} \left\{ \zeta^{n-1} (T_k) + L^{n-1}({{\bs}}^{k}_{l}(j)) \right\}.
	\end{equation}}

	Let ${\bv}_{k_n} \in \mathfrak{GR}(\zeta)$ be a good return vector that realizes the maximum
	\[
	\max_{{\bv}\in \mathfrak{GR}(\zeta)} \norm{\om \cdot L^n{\bv}}_{\R/\Z} =  \norm{\om \cdot L^n{\bv}_{k_n}}_{\R/\Z}.
	\]
	Let $T_{k_n}$ be a prototile such that two of its translates in $\zeta(T_j)$ are connected by ${\bv}_{k_n}$ corresponding to indices $l_1$ and $l_2$ in the decomposition (\ref{decomposition}). As the number of summands in this entry equals  $\mathcal{S}^{\sf T}(j,k_n)$, it follows that
	\small{
	\begin{align*}
	\abs{\mathcal{M}_n(j,k_n)} &\leq \mathcal{S}^{\sf T}(j,k_n) - 2 + \left| e[\om\cdot L^n{{\bs}}^{k_n}_{l_1}(j)] + e[\om\cdot L^n{{\bs}}^{k_n}_{l_2}(j)] \right|
	\nonumber\\
	&\leq \mathcal{S}^{\sf T}(j,k_n) - 2 + \Big| 1 + e[\om\cdot L^n(\underbrace{{{\bs}}^{k_n}_{l_2}(j) - {{\bs}}^{k_n}_{l_1}(j)}_{= {\bv}_{k_n}})] \Big| 
	\nonumber\\
	&\leq \mathcal{S}^{\sf T}(j,k_n) - \dfrac{1}{2}\norm{\om\cdot L^n{\bv}_{k_n}}^2_{\R/\Z},
	\end{align*}}using the inequality $\abs{1 + e[\tau]} \leq 2 - \dfrac{\norm{\tau}^2_{\R/\Z}}{2}$ in the last step. Consider $\bx \in \R^m$ with positive entries and denote by $\mathcal{N}^{\abs{\cdot}}$ the matrix whose entries are the absolute values of a matrix $\mathcal{N}$. Then,
	\small{
	\begin{align*}
	\mathcal{M}^{\abs{\cdot}}_n \bx(j) \leq \sum^m_{l=1} \abs{\mathcal{M}_n(j,l)}\bx(l) &\leq \sum^m_{l=1} \mathcal{S}^{\sf T}(j,l)\bx(l) - \dfrac{1}{2}\norm{\om\cdot L^n{\bv}_{k_n}}^2 \bx(k_n) 
	\\
	&\leq \left( 1 - \Xi[\bx]\,\norm{\om\cdot L^n{\bv}_{k_n}}^2_{\R/\Z} \right) \: \mathcal{S}^{\sf T}\bx(j),
	\end{align*}}where the constant $\Xi[\bx]$ may be taken as
	\[
	\Xi[\bx] = \dfrac{\bx(k_n)}{2m \max_{\,l,l' = 1,\dots,m} \mathcal{S}^{\sf T}(l,l') \max_{\,l=1,\dots,m} \bx(l)}.
	\]	
	
	Since $\Pi_n = \mathcal{M}_{n-1} \dots \mathcal{M}_0 \Pi_0$ and $\Pi_0^{\abs{\cdot}} = I_{m}$, then
	\[
	\Pi^{\abs{\cdot}}_n(\cdot,i) \leq \mathcal{M}^{\abs{\cdot}}_{n-1} \dots \mathcal{M}^{\abs{\cdot}}_0 \, {\bf 1},    
	\]
	where ${\bf 1}$ is the vector of $\R^d$ with all its entries equal to one. In this way, inductively we deduce the next inequality by coordinates:
	\[
	\Pi^{\abs{\cdot}}_n(\cdot,i) \leq (\mathcal{S}^{\sf T})^n{\bf 1} \, \prod^{n-1}_{l=0} 1 - \Xi\left[(\mathcal{S}^{\sf T})^l{\bf 1}\right] \norm{\om \cdot L^l{\bv}_{k_n}}^2_{\R/\Z}.
	\]
	Finally, by the Perron–Frobenius theorem $(\mathcal{S}^{\sf T})^n {\bone} \leq C \#\zeta^n(T_j){\bone}$, where $C>0$ depends only on $\zeta$. By the same theorem $\mathfrak{m} = \inf_{l\geq0} \Xi[(\mathcal{S}^{\sf T})^l{\bone}] > 0$ and this concludes the proof. 
\end{proof}
\begin{corollary}\label{thm:SumBound}
	Consider the hypotheses and notation of Proposition \ref{thm:ProdBound}, and put $f = \mathds{1}_{\mathfrak{X}_{T_i}}$. For each $\bX \in \mathfrak{X}_{\zeta}$ and $i = 1,\dots, m$
	\small{
	\begin{align*}
	\abs{S^f_R(\bX, \om)} = \mathcal{O}\left( R^{d} \prod^{\floor{\log_{\norm{L}}(R)}}_{l=0} 1 - \max_{{\bv}\in \mathfrak{GR}(\zeta)} \mathfrak{m} \norm{\om \cdot L^l{\bv}}^2_{\R/\Z} \right).
	\end{align*}}
\end{corollary}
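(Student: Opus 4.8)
The plan is to read the estimate off directly from Lemma \ref{thm:Factor}, feeding it the sequence produced by Proposition \ref{thm:ProdBound}. Concretely, letting $C\geq 1$ denote the implied constant in Proposition \ref{thm:ProdBound}, I would set
\[
F_{\om}(n) = C \prod_{l=0}^{n-1} \left( 1 - \max_{{\bv}\in\mathfrak{GR}(\zeta)} \mathfrak{m}\norm{\om\cdot L^l{\bv}}^2_{\R/\Z} \right),
\]
with the empty product giving $F_{\om}(0) = C$, and then verify the two hypotheses of Lemma \ref{thm:Factor} for a suitable $\xi \in (1,\norm{L})$. Granting these, Lemma \ref{thm:Factor} yields $\abs{S^f_R(\bX,\om)} = \mathcal{O}(R^d F_{\om}(\floor{\log_{\norm{L}}(R)})) + \mathcal{O}(R^{d-1})$, which is the claimed bound after two cleanups described below.

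The second hypothesis of Lemma \ref{thm:Factor} is exactly Proposition \ref{thm:ProdBound} for $n\geq 1$ (the constant $C$ having been folded into $F_{\om}$), while for $n=0$ it reduces to $\abs{\Phi_i(T_j,\om)} = \delta_{T_i,T_j} \leq 1 \leq C$, using $\text{punc}(T_j)=\bzero$. The upper estimate $F_{\om}(n+1)\leq F_{\om}(n)$ of the first hypothesis is immediate, since each factor of the product lies in $(0,1]$ (as $\mathfrak{m}\in(0,1)$ and $\norm{\cdot}_{\R/\Z}\leq 1/2$).

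The delicate point, which I expect to be the main obstacle, is the lower estimate $F_{\om}(n)/\xi \leq F_{\om}(n+1)$. Since the ratio $F_{\om}(n+1)/F_{\om}(n)$ is precisely the $l=n$ factor, this is equivalent to
\[
\max_{{\bv}\in\mathfrak{GR}(\zeta)} \mathfrak{m}\norm{\om\cdot L^n{\bv}}^2_{\R/\Z} \leq 1 - \frac{1}{\xi}.
\]
Because $\norm{\tau}_{\R/\Z}\leq 1/2$ for all $\tau$, the left-hand side never exceeds $\mathfrak{m}/4$, so it suffices to arrange $\mathfrak{m}/4 \leq 1 - 1/\xi$ for some $\xi\in(1,\norm{L})$, i.e. $\xi \geq 4/(4-\mathfrak{m})$ together with $4/(4-\mathfrak{m}) < \norm{L}$. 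This compatibility can fail when $\norm{L}$ is only slightly larger than $1$ while $\mathfrak{m}$ is large; the remedy is that shrinking $\mathfrak{m}$ only enlarges each factor, hence only weakens Proposition \ref{thm:ProdBound} while keeping it valid. I would therefore first fix any $\xi\in(1,\norm{L})$ and then replace $\mathfrak{m}$ by $\min\{\mathfrak{m},\,4(1-1/\xi)\}$, still a positive constant depending only on $\zeta$, which makes the displayed inequality hold for every $n$.

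Finally I would carry out the two cleanups. The error term $\mathcal{O}(R^{d-1})$ is absorbed: iterating the first hypothesis gives $F_{\om}(n)\geq C\xi^{-n}$, so with $n=\floor{\log_{\norm{L}}(R)}$ one has $R^d F_{\om}(n)\gtrsim R^{d-\log_{\norm{L}}\xi}$, and $\log_{\norm{L}}\xi<1$ forces this to dominate $R^{d-1}$. The off-by-one between $F_{\om}(\floor{\log_{\norm{L}}(R)})$, whose product runs only to $\floor{\log_{\norm{L}}(R)}-1$, and the product up to $\floor{\log_{\norm{L}}(R)}$ in the statement is likewise harmless: the comparability $F_{\om}(n)\leq \xi F_{\om}(n+1)$ shows the two differ by at most the fixed factor $\xi$, which the $\mathcal{O}$ absorbs. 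Combining these observations turns the output of Lemma \ref{thm:Factor} into the asserted estimate.
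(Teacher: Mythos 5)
Your proposal is correct and follows essentially the same route as the paper: take $F_{\om}(n)$ to be the Riesz product from Proposition \ref{thm:ProdBound}, shrink $\mathfrak{m}$ (which only weakens that proposition) so that every factor is at least $1/\xi$ for some fixed $\xi\in(1,\norm{L})$, and then invoke Lemma \ref{thm:Factor}. The paper chooses $\xi=(\norm{L}+1)/2$ and $\mathfrak{m}\leq(\norm{L}-1)/(\norm{L}+1)$ and leaves the absorption of the $\mathcal{O}(R^{d-1})$ term and the off-by-one in the product index implicit, whereas you spell these out; the substance is the same.
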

\begin{proof}
	Let $F_{\om}(n) = \prod^{n-1}_{l=0} 1 - \max_{{\bv}\in \mathfrak{GR}(\zeta)} \mathfrak{m} \: \norm{\om \cdot L^l{\bv}}^2_{\R/\Z}$. Decreasing $\mathfrak{m}$ if necessary, assume $\mathfrak{m} \leq (\norm{L}-1)/(\norm{L}+1)$ and take $\xi  = (\norm{L}+1)/2$. The second assumption in Lemma \ref{thm:Factor} is satisfied by Proposition \ref{thm:ProdBound}, and since $1-\mathfrak{m} \geq \dfrac{2}{\norm{L}+1}$ we have
	\[
	\dfrac{F_{\om}(n)}{\xi} = \dfrac{2F_{\om}(n)}{\norm{L}+1} \leq F_{\om}(n)(1-\mathfrak{m}) \leq F_{\om}(n+1).
	\]
	It is easy to see that we can ignore the term $\mathcal{O}(R^{d-1})$ in this case.
\end{proof}
To prove Theorem \ref{thm:main}, we must bound the term $R^d$ when $\norm{\om}$ is large. This will be accomplished using the decay of the Fourier transform appearing in equation \eqref{fourier} for Lipschitz functions. We recall that for a Lipschitz function $g:\R^d \to \C$, its Lipschitz norm $\norm{g}_{\text{Lip}}$ is defined by
\[
\norm{g}_{\text{Lip}} = \norm{g}_\infty + S, \quad S = \inf \left\{s>0\,\middle|\, \abs{g(\bx) - g(\boldsymbol{y})} \leq s\abs{\bx - \boldsymbol{y}},\, \forall \bx,\boldsymbol{y} \in \R^d  \right\}.
\]
\begin{corollary}\label{cor:sumbound}
	Let $f:\mathfrak{X_{\zeta}}\to\C$ be a Lispchitz cylindrical function. Then
	\small{
	\[
	\abs{S^f_R(\bX, \om)} =  \mathcal{O}\left( \norm{f}_{\textup{Lip}} (1+\norm{\om})^{-1} R^{d} \prod^{\floor{\log_{\norm{L}}(R)}}_{l=0} 1 - \max_{{\bv}\in \mathfrak{GR}(\zeta)} \mathfrak{m} \norm{\om \cdot L^l{\bv}}^2_{\R/\Z} \right),
	\]}where $\norm{f}_{\textup{Lip}} := \max_i \norm{\psi_i}_{\textup{Lip}}$, and $\norm{\cdot}_{\textup{Lip}}$ is the Lipschitz norm.
\end{corollary}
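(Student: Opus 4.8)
The plan is to reduce to a single prototile, apply the exact decomposition \eqref{fourier}, and then upgrade the constant volume prefactor appearing in Corollary \ref{thm:SumBound} into the announced Fourier decay, while treating the boundary error term with extra care. By linearity of $f\mapsto S^f_R(\bX,\om)$, write $f=\sum_{i=1}^m f_i$ with $f_i=\psi_i\mathds{1}_{\mathfrak{X}_{T_i}}$ and bound each $S^{f_i}_R$ separately; since $m$ is fixed, summing the $m$ estimates only affects the implied constant. For a fixed $i$, equations \eqref{error}--\eqref{fourier} give the exact identity
\[
S^{f_i}_R(\bX,\om)=\widehat{\psi_i}(\om)\,\Phi_i\big(\,]C_R[^{\bX},\om\big)+\Delta_i(R).
\]
For the structure factor I would simply reuse the tower estimate already carried out in Lemma \ref{thm:Factor} and Corollary \ref{thm:SumBound}: that argument in fact bounds $\abs{\Phi_i(\,]C_R[^{\bX},\om)}$ itself (the bounded volume prefactor there being harmless) by $\mathcal{O}\big(R^d F_\om(\floor{\log_{\norm{L}}R})\big)$, where $F_\om(n)=\prod_{l=0}^{n-1}1-\max_{\bv\in\mathfrak{GR}(\zeta)}\mathfrak{m}\norm{\om\cdot L^l\bv}^2_{\R/\Z}$.

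The genuinely new ingredient is the decay of $\widehat{\psi_i}$. Choosing the coordinate $j$ with $\abs{\omega_j}$ maximal (so that $\abs{\omega_j}$ is comparable to $\norm{\om}$) and integrating by parts once in the $s_j$-direction over $\text{supp}(T_i)$, the boundary term vanishes precisely because $\psi_i\equiv0$ on $\partial T_i$, leaving $\widehat{\psi_i}(\om)=-\tfrac{1}{2\pi i\omega_j}\int_{\text{supp}(T_i)}e[\om\cdot\bs]\,\partial_{s_j}\psi_i\,d\bs$. Combined with the trivial bound $\abs{\widehat{\psi_i}(\om)}\le\norm{\psi_i}_\infty\,\mathcal{L}^d(\text{supp}(T_i))$ valid for bounded $\om$, this yields $\abs{\widehat{\psi_i}(\om)}=\mathcal{O}\big(\norm{\psi_i}_{\textup{Lip}}(1+\norm{\om})^{-1}\big)$. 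Multiplying the two bounds produces exactly the announced main term.

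The step I expect to be the real obstacle is the error term $\Delta_i(R)$. The crude estimate $\abs{\Delta_i(R)}=\mathcal{O}(\norm{\psi_i}_\infty R^{d-1})$ used in Corollary \ref{thm:SumBound} carries no decay in $\om$, and since the target now contains the factor $(1+\norm{\om})^{-1}$, this crude bound would dominate the main term for large $\om$. To repair this I would integrate by parts in the boundary shell $\Sigma=C_R\setminus\text{supp}(\,]C_R[^{\bX})$ as well. The point is that $\bs\mapsto f_i(\phi_\bs(\bX))$ is globally Lipschitz: it equals a translate of $\psi_i$ on each $T_i$-tile, vanishes elsewhere, and matches continuously across tile boundaries because $\psi_i$ vanishes on $\partial T_i$. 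Slicing $\Sigma$ in the $s_j$-direction and integrating by parts on each resulting subinterval leaves only endpoint contributions, which lie either on tile boundaries (where the function vanishes) or on $\partial C_R$ (of total transverse measure $\mathcal{O}(R^{d-1})$); the bulk term is controlled by $\norm{\psi_i}_{\textup{Lip}}\,\mathcal{L}^d(\Sigma)=\mathcal{O}(\norm{\psi_i}_{\textup{Lip}}R^{d-1})$. This gives $\abs{\Delta_i(R)}=\mathcal{O}\big(\norm{\psi_i}_{\textup{Lip}}(1+\norm{\om})^{-1}R^{d-1}\big)$, now with the required decay.

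Finally I would check that this error term is absorbed by the main term. Each factor of $F_\om$ is at least $1-\mathfrak{m}/4$ and $n\asymp\log_{\norm{L}}R$, so $F_\om(\floor{\log_{\norm{L}}R})\ge cR^{-\beta}$ with $0<\beta<1$; here $\beta<1$ is exactly where the normalization $\mathfrak{m}\le(\norm{L}-1)/(\norm{L}+1)$ and $\norm{L}>1$ from Corollary \ref{thm:SumBound} enter. Consequently $R\,F_\om(\floor{\log_{\norm{L}}R})\ge cR^{1-\beta}\to\infty$, so the $(1+\norm{\om})^{-1}R^{d-1}$ error is dominated by the main term $(1+\norm{\om})^{-1}R^d F_\om(\floor{\log_{\norm{L}}R})$, and summing over $i=1,\dots,m$ with $\norm{f}_{\textup{Lip}}=\max_i\norm{\psi_i}_{\textup{Lip}}$ yields the stated estimate.
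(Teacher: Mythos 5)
Your proposal is correct and follows the same route as the paper: decompose via equation \eqref{fourier} into $\widehat{\psi_i}(\om)\,\Phi_i(\,]C_R[^{\bX},\om)+\Delta_i(R)$, bound the structure factor by the tower estimate of Lemma \ref{thm:Factor} and Corollary \ref{thm:SumBound}, and supply the factor $(1+\norm{\om})^{-1}$ from the Fourier decay of compactly supported Lipschitz functions vanishing on $\partial T_i$. The one place where you go beyond the paper is the error term: the paper's proof is a one-line ``direct consequence'' and silently reuses the crude bound $\Delta_i(R)=\mathcal{O}(R^{d-1})$, which carries no decay in $\om$ and is \emph{not} dominated by the stated main term $(1+\norm{\om})^{-1}R^dF_{\om}(\floor{\log_{\norm{L}}R})$ once $\norm{\om}$ is large compared with $R$ (the main term is only bounded below by $c(1+\norm{\om})^{-1}R^{d-\beta}$ with $\beta<1$). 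Your integration by parts over the boundary shell $\Sigma$, using that $\bs\mapsto f_i(\phi_{\bs}(\bX))$ is globally Lipschitz and vanishes on all tile boundaries so that the only surviving endpoint contributions lie on $\partial C_R$, upgrades this to $\mathcal{O}(\norm{\psi_i}_{\textup{Lip}}(1+\norm{\om})^{-1}R^{d-1})$ and makes the corollary literally true as stated; your absorption step using $F_{\om}(n)\geq cR^{-\beta}$, $\beta<1$, is also correct. It is worth noting that the gap you repair is harmless for the paper's downstream use (in Case 2 of Proposition \ref{prop:spectralestimate} one only needs $\mathcal{O}(R^d\log(R)^{-\eta})$, and $R^{d-1}$ trivially satisfies this), but your version is the cleaner and more defensible statement of the corollary.
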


\begin{proof}
	The proof is a direct consequence of equation \eqref{fourier}, Corollary \ref{thm:SumBound} and the next fact: the Fourier transform of the Lipschitz functions $\psi_i$ satisfy
	\begin{equation}
		\abs{\widehat{\psi}_i(\om)} = \mathcal{O}(\norm{\psi_i}_{\text{Lip}} (1+\norm{\om})^{-1}).
	\end{equation}
\end{proof}

\subsection{Spectrum of self-affine tilings expansions}\label{subsec:conditions}
Let us recall some results concerning the kind of linear transformations that are expansions of self-affine tilings of $\R^d$.

\begin{theorem}[R. Kenyon, B. Solomyak, \cite{Kenyon}, Theorem 3.1] \label{thm:SubstitutionPerron}
	Let $L : \R^d \longrightarrow \R^d$ be the expansion of a self-affine substitution that is diagonalizable (over $\C$). Then
	\begin{enumerate}
		\item Every eigenvalue $L$ is an algebraic integer.
		\item If $\lambda_1$ is an eigenvalue of multiplicity $k_1$ and $\lambda_2$ is a Galois conjugate of $\lambda_1$ of multiplicity $k_2$, then $\abs{\lambda_1} > \abs{\lambda_2}$ or $\lambda_2$ is an eigenvalue of $L$ and $k_2 \geq k_1$.
	\end{enumerate}
\end{theorem}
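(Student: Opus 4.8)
The plan is to realize $L$ as a quotient of an integer matrix acting on the module of return vectors, and then to read off both assertions from the Galois theory of that matrix together with the Delone geometry of the tiling. \emph{For Part (1)}, fix a self-affine tiling $\bX$ with $\zeta(\bX)=\bX$ and let $\Gamma\subseteq\R^d$ be the subgroup generated by the return vectors $\mathfrak{R}(\zeta)$. First I would check that $L\Gamma\subseteq\Gamma$: if $X_1=X_2+\bv$ are two tiles of the same type in $\bX$, then applying $\zeta$ produces two translated supertiles, and comparing a tile of the first supertile with the corresponding tile of the second exhibits $L\bv$ as a return vector of $\bX$ again, so $L\bv\in\Gamma$. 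Next, $\Gamma$ is finitely generated: by finite local complexity only finitely many return vectors are realized inside the patches $\zeta(T_j)$, and primitivity together with repetitivity lets one write every return vector as an integer combination of these. Thus $\Gamma\cong\Z^r$ with $d\leq r<\infty$, and since the normalized return vectors are dense in the sphere (as recalled before Proposition \ref{thm:ProdBound}), the inclusion extends to a surjective $\R$-linear map $\pi:\Gamma\otimes\R\twoheadrightarrow\R^d$ intertwining the action of $L$ on $\Gamma$ with the action of $L$ on $\R^d$. Choosing a $\Z$-basis of $\Gamma$ represents $L|_\Gamma$ by a matrix $M\in M_r(\Z)$ with monic integer characteristic polynomial; every eigenvalue of $L$ is then an eigenvalue of $M$ (eigenvalues of an equivariant quotient are a subset of the eigenvalues of the whole), hence an algebraic integer.

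\emph{For the setup of Part (2)}, complexify and write $\Gamma\otimes\C=\bigoplus_\mu E_\mu$ for the generalized eigenspaces of $M$, and set $W=\ker(\pi\otimes\C)$, an $M$-invariant subspace. The generalized $\lambda$-eigenspace of $L$ is $\pi(E_\lambda)$, so the multiplicity of $\lambda$ as an eigenvalue of $L$ is $k_\lambda=\dim E_\lambda-\dim(E_\lambda\cap W)$. Because $M$ has rational entries, its characteristic polynomial factors over $\Q$ into powers of distinct irreducibles, and all roots of a given irreducible factor, i.e.\ a full Galois orbit, occur with one common multiplicity; hence conjugate $\lambda_1,\lambda_2$ satisfy $\dim E_{\lambda_1}=\dim E_{\lambda_2}=:m$, and a Galois automorphism $g$ with $g\lambda_1=\lambda_2$ induces, through its coordinatewise action on $\Gamma\otimes\overline{\Q}$, an $M$-equivariant bijection $E_{\lambda_1}\to E_{\lambda_2}$. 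The entire content of (2) thus reduces to comparing $\dim(E_{\lambda_1}\cap W)$ with $\dim(E_{\lambda_2}\cap W)$, the subtlety being that the real subspace $W$ is \emph{not} Galois-stable, so $g$ need not map $E_{\lambda_1}\cap W$ onto $E_{\lambda_2}\cap W$ and a pure symmetry argument fails.

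\emph{The geometric heart of Part (2)} is where I would use that $L$ is an expansion and that the control points of $\bX$ form a Delone set (uniformly discrete and relatively dense). The idea is a growth/boundedness dichotomy: points of $\Gamma$ are genuine vectors of $\R^d$, and expressing the control points in the eigencoordinates of $M$ shows that a direction contained in $W$ can only exhibit behaviour consistent with the discreteness of the control points at its own scale. Assuming $|\lambda_2|\geq|\lambda_1|$, one transports via $g$ the $k_1$ independent directions in $E_{\lambda_1}$ that survive $\pi$; the expansion rate $|\lambda_2|^n\geq|\lambda_1|^n$ then forces the corresponding conjugate coordinates of a relatively dense family of control points to grow faster than uniform discreteness permits, unless those directions are also visible in $\R^d$. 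This yields simultaneously $\lambda_2\in\mathrm{spec}(L)$ and $\dim(E_{\lambda_2}\cap W)\leq\dim(E_{\lambda_1}\cap W)$, i.e.\ $k_2\geq k_1$. Diagonalizability of $L$ is invoked here to replace generalized eigenspaces by honest eigenspaces and keep the coordinate estimate clean.

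I expect this last step to be the main obstacle. The algebra of Part (1) and the Galois bookkeeping of the setup are routine, but quantitatively converting the Delone property and the expansiveness of $L$ into the inequality $\dim(E_{\lambda_2}\cap W)\leq\dim(E_{\lambda_1}\cap W)$ is delicate precisely because $W$ carries no a priori arithmetic structure; one must therefore produce an explicit growth dichotomy for the conjugate coordinates of the control points rather than appeal to symmetry.
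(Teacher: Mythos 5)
This statement is imported from Kenyon--Solomyak \cite{Kenyon} and the paper offers no proof of it, so there is nothing internal to compare against; I can only judge your attempt against the known argument. Your Part (1) is essentially complete and is the standard route: $L$-invariance of the return-vector group, finite generation via f.l.c., the surjection $\pi:\Gamma\otimes\R\to\R^d$, and the observation that eigenvalues of an equivariant quotient are eigenvalues of the integer matrix $M$. Your algebraic bookkeeping for Part (2) (equal dimensions $\dim E_{\lambda_1}=\dim E_{\lambda_2}$ over $\Q$, the Galois bijection $E_{\lambda_1}\to E_{\lambda_2}$, the reduction to comparing $\dim(E_{\lambda_i}\cap W)$, and the correct diagnosis that $W=\ker\pi$ is not Galois-stable) is also right and matches the structure of the original proof.

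The genuine gap is exactly where you flag it: the ``growth/boundedness dichotomy'' is described but never executed, and as described it leans on the wrong half of the Delone property. Uniform discreteness is a \emph{lower} bound on separation and by itself forbids nothing from ``growing too fast''; what the argument actually requires is an \emph{upper} bound on the address map, namely $\|a(x)\|=\mathcal{O}(\|x\|)$ for control points $x$ (obtained from relative density and f.l.c.\ by writing $a(x)$ as a chain of $\mathcal{O}(\|x\|)$ nearest-neighbour differences from the finite set $\mathcal{D}$), combined with the exact eigenvalue relation for the $\lambda_2$-components of addresses under iteration of $\zeta$ (pairing $M^na(x)$ with a dual eigenvector of $M$ for $\lambda_2$ multiplies by $\lambda_2^n$). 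Even granting that, the comparison $|\lambda_2|^n$ versus the growth of $\|L^nx\|$ only gives an immediate contradiction when $|\lambda_2|$ exceeds the \emph{largest} eigenvalue of $L$; for a non-maximal $\lambda_1$ you must restrict attention to control points whose positions have a definite component in the $\lambda_1$-eigenspace of $L$ and show that the relevant functional on addresses is asymptotically linear in that component --- this is the technical core of Kenyon--Solomyak's proof and it is absent here. Until that estimate is produced, the claimed inequality $\dim(E_{\lambda_2}\cap W)\leq\dim(E_{\lambda_1}\cap W)$, and hence $k_2\geq k_1$, is unproved.
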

\begin{remark}
	An expansion satisfying the second condition is called \textbf{Perron expansion}. If we drop the assumption that the expansion is diagonalizable, the result still holds with an extended notion of Perron expansion(see \cite{SelfExpansions}).
\end{remark}

The following result gives conditions for a substitution to guarantee that the associated dynamical system is weakly-mixing. A set $\Lambda = \{\lambda_1,\dots,\lambda_b\}$ is said to be a \textbf{Pisot family} if each $\lambda_i$ is an algebraic integer outside the unit disk and such that for every of its Galois conjugates $\lambda$ which are not in $\Lambda$ it must hold $\abs{\lambda}<1$. If a family $\Lambda$ does not satisfy this property we will call it a \textbf{non-Pisot family}. Any set of algebraic integers $\Lambda$ may be decomposed as the union of subsets whose elements share the same minimal polynomial. If each of this subsets is a non-Pisot family, $\Lambda$ is called a \textbf{totally non-Pisot family}.

\begin{theorem}[see \cite{robinson2004symbolic}, Theorem 7.6]\label{wmcondition}
	Let $\zeta$ be a primitive and injective substitution with expansion $L$, which is diagonalizable and totally non-Pisot. Then $(\mathfrak{X_{\zeta}}, \phi)$ is weakly-mixing. 
\end{theorem}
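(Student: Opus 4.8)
The plan is to use the equivalence recalled at the end of Section 2.3: $(\mathfrak{X}_\zeta,\phi)$ is weakly mixing if and only if its only eigenvalue is $\om=\bzero$. Hence it suffices to show that the totally non-Pisot hypothesis forbids any eigenvalue $\om\neq\bzero$. The entry point is the classical necessary condition for eigenvalues of self-affine tilings (Solomyak, \cite{solomyak1997dynamics}): if $\om\in\R^d$ is an eigenvalue, then
\[
\lim_{n\to\infty}\norm{\om\cdot L^n\bv}_{\R/\Z}=0\quad\text{for every }\bv\in\mathfrak{R}(\zeta).
\]
I would recall this first; within the present framework it is also visible through Corollary \ref{thm:SumBound}, since an eigenvalue $\om$ forces $G_R(f,\om)\gtrsim R^d$ for a cylindrical $f$ with $\sigma_f(\{\om\})>0$, which by the product bound is possible only if $\prod_l\bigl(1-\max_{\bv}\mathfrak{m}\norm{\om\cdot L^l\bv}^2_{\R/\Z}\bigr)\not\to0$, i.e. $\sum_l\max_{\bv}\norm{\om\cdot L^l\bv}^2_{\R/\Z}<\infty$. (Producing such an $f$ requires the density of higher-order cylindrical functions noted in the remark after the definition, so for a clean statement I would lean on the classical condition.)

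Next I would linearize over the return module. By finite local complexity the $\Z$-span $\Xi=\langle\mathfrak{R}(\zeta)\rangle_\Z\subseteq\R^d$ is a finitely generated, $L$-invariant module, say of rank $r\geq d$ with $\Z$-basis $\bv_1,\dots,\bv_r$; in this basis $L$ is an integer matrix $M$ whose characteristic polynomial is a product of the minimal polynomials of the eigenvalues of $L$, so that the spectrum of $M$ is the union of the full Galois orbits of the $\lambda_i$. Writing $\pi:\Z^r\to\R^d$, $\pi(\mathbf{e}_i)=\bv_i$, one has $\pi M=L\pi$; setting $N=M^{\sf T}$ and $\mathbf{a}=\pi^{\sf T}\om$, the coordinates of $N^n\mathbf{a}$ are exactly $\langle\om,L^n\bv_i\rangle$, so the eigenvalue condition becomes the toral statement $N^n\mathbf{a}\to\bzero$ in $\R^r/\Z^r$. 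Here $\mathbf{a}$ lies in the $N$-invariant subspace $W=\pi^{\sf T}(\R^d)$, on which $N$ is conjugate to $L^{\sf T}$ and hence expanding (all eigenvalues of modulus $>1$); since $\zeta$ is diagonalizable, $N$ is diagonalizable and $\R^r=W\oplus U'\oplus E^{\mathrm s}$ splits into $N$-invariant pieces, with $U'$ spanned by the unstable conjugate eigenspaces that are not eigenvalues of $L$ and $E^{\mathrm s}$ the span of eigenspaces of modulus $<1$.

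The heart of the argument is then two short steps. First, choosing $\mathbf{z}_n\in\Z^r$ with $\boldsymbol{\epsilon}_n=N^n\mathbf{a}-\mathbf{z}_n\to\bzero$, the integer vectors $\mathbf{z}_{n+1}-N\mathbf{z}_n=-\boldsymbol{\epsilon}_{n+1}+N\boldsymbol{\epsilon}_n\to\bzero$ must vanish for large $n$, so $\mathbf{z}_n=N^{\,n-n_0}\mathbf{z}_{n_0}$ and $\mathbf{b}:=N^{n_0}\mathbf{a}-\mathbf{z}_{n_0}$ satisfies $N^n\mathbf{b}\to\bzero$, i.e. $\mathbf{b}\in E^{\mathrm s}$; consequently $\mathbf{z}_{n_0}=N^{n_0}\mathbf{a}-\mathbf{b}$ has no $U'$-component. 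Second, the totally non-Pisot hypothesis provides, for each minimal-polynomial class of eigenvalues of $L$, a Galois conjugate $\lambda'$ of modulus $\geq1$ that is not an eigenvalue of $L$, whose eigenspace lies in $U'$; hence $P_{E_{\lambda'}}\mathbf{z}_{n_0}=0$. Because $\mathbf{z}_{n_0}\in\Z^r$ is rational, its spectral projections onto Galois-conjugate eigenspaces are permuted by the Galois group, so vanishing on $E_{\lambda'}$ propagates to every conjugate eigenspace, in particular to all physical ones, giving $P_W\mathbf{z}_{n_0}=0$. Combining, $N^{n_0}\mathbf{a}=\mathbf{z}_{n_0}+\mathbf{b}\in E^{\mathrm s}$, while $N^{n_0}\mathbf{a}\in W$ and $W\cap E^{\mathrm s}=0$; as $N$ is invertible this forces $\mathbf{a}=\bzero$, i.e. $\om=\bzero$.

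The main obstacle is the algebraic bookkeeping rather than any single hard estimate: establishing that $\Xi$ is a finitely generated $L$-invariant module with $\mathrm{spec}(M)$ equal to the full Galois orbits (where the Kenyon–Solomyak structure of the expansion, Theorem \ref{thm:SubstitutionPerron}, and the diagonalizability hypothesis enter), and making the Galois-equivariance of the spectral projections of integer vectors precise. A secondary delicate point is the treatment of conjugates of modulus exactly $1$ (Salem-type behaviour): one must check that such eigenspaces still lie outside $E^{\mathrm s}$, so that the non-Pisot witness with $\abs{\lambda'}\geq1$ indeed sits in $U'$; diagonalizability guarantees that $N^n\mathbf{b}\to\bzero$ excludes every eigenspace on the unit circle, which closes this gap.
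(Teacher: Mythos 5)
The paper does not prove this statement; it is imported verbatim from Robinson's survey \cite{robinson2004symbolic}, so there is no internal proof to compare against. Your reconstruction is essentially the standard Pisot-type argument behind that theorem, and it is correct in outline; it also runs on exactly the machinery the paper later builds for the quantitative version in Proposition \ref{prop:spectralestimate} (the linearization $LV=VM$ over the return module, the splitting $N^n\mathbf{a}=\mathbf{z}_n+\boldsymbol{\epsilon}_n$, and the observation that the integer vectors $\mathbf{z}_{n+1}-N\mathbf{z}_n$ must eventually vanish). Two points need tightening. First, $\R^r=W\oplus U'\oplus E^{\mathrm{s}}$ need not exhaust $\R^r$, since the eigenspaces of $N$ attached to eigenvalues of $L$ can be strictly larger than $W$; this is harmless because your argument only uses the individual spectral projections $P_{E_\lambda}$ --- replace $W$ by $\bigoplus_{\lambda\in\mathrm{spec}(L)}E_\lambda(N)\supseteq W$, project $N^{n_0}\mathbf{a}=\mathbf{z}_{n_0}+\mathbf{b}$ onto it (the right side projects to zero), and use injectivity of $N$ on $W$. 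Second, diagonalizability of $M$ does not follow formally from that of $L$; it is Lemma 3.3 of \cite{Kenyon}, which the paper itself invokes, so cite it rather than assert it. Your Galois-equivariance step is sound, since for diagonalizable integer $N$ the projections $P_{E_\lambda}$ are Lagrange-interpolation polynomials in $N$ with coefficients in $\Q(\lambda)$. Finally, the attempted rederivation of the eigenvalue criterion from Corollary \ref{thm:SumBound} is the one shaky step (it needs a cylindrical $f$ with $\sigma_f(\{\om\})>0$), but you rightly fall back on Theorem \ref{thm:eigenvalues}, which applies because injectivity is equivalent to aperiodicity by Theorem \ref{thm:Uniq}.
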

\begin{definition}\label{def:stnp}
	Let $\Lambda=\Lambda_1\cup\dots \cup\Lambda_p$ be a totally non-Pisot family decomposed according to the corresponding minimal polynomials. 
	If for each $i = 1,\dots,p$ and $\lambda\in\Lambda_i$ there exists a Galois conjugate of $\lambda$ outside the closed unit disk and that is not an element of $\Lambda_i$, we will say $\Lambda$ is a \textbf{strongly totally non-Pisot family}. We will say a substitution $\zeta$ is \textbf{strongly totally non-Pisot} if the spectrum of the expansion of $\zeta$ is a strongly totally non-Pisot family.
\end{definition}

To finish this section, we recall a necessary and sufficient condition for $\om\in\R^d$ to be an eigenvalue of the system $(\mathfrak{X}_{\zeta},\mu,\phi)$.
\begin{theorem}[B. Solomyak, \cite{solomyak1997dynamics}, Theorem 5.1]\label{thm:eigenvalues}
	If $\zeta$ is a primitive aperiodic substitution with expansion $L$, then $\om\in\R^d$ is an eigenvalue of the system $(\mathfrak{X}_{\zeta},\mu,\phi)$ if and only if 
	\[
	\lim_{n \to \infty} e\left[ (L^{\sf{T}})^n\om \cdot{\bv} \right] = 1, \quad \forall \,{\bv}\in \mathfrak{R}(\zeta).
	\]
\end{theorem}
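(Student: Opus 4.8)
The plan is to prove the two implications separately, using throughout the normalization that an eigenfunction $f$ for $\om$ may be taken with $\abs{f}\equiv 1$: indeed $\abs{f}$ is $\phi$-invariant, hence $\mu$-a.e.\ constant by unique ergodicity, so we rescale. Two structural facts drive the argument. First, since all tilings are repetitive (Proposition \ref{prop:rep}) the system is minimal and uniquely ergodic, so every measurable eigenfunction agrees $\mu$-a.e.\ with a continuous one; I will silently replace $f$ by this continuous version. Second, fixing a self-affine tiling $\bX_0$ with $\zeta(\bX_0)=\bX_0$ and using $\zeta(T+\bs)=\zeta(T)+L\bs$, one has the renormalization identity $\zeta\circ\phi_{\bs}=\phi_{L\bs}\circ\zeta$; this is the conceptual engine exchanging the eigenvalue $\om$ with $(L^{\sf T})\om$ and explaining why the quantities $(L^{\sf T})^n\om\cdot\bv$ appear.

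\emph{Necessity} ($\om$ an eigenvalue $\Rightarrow$ the limit condition). Fix $\bv\in\mathfrak{R}(\zeta)$ realized by two tiles $X_1,\,X_2=X_1+\bv$ of the same type in the self-affine tiling $\bX_0$. Applying $\zeta^n$ and using $\zeta^n(\bX_0)=\bX_0$ produces two order-$n$ supertiles $\zeta^n(X_1)$ and $\zeta^n(X_2)=\zeta^n(X_1)+L^n\bv$ inside $\bX_0$; because $X_1,X_2$ have the same type, these supertiles are identical copies of $\zeta^n(T_{j_0})$ translated by $L^n\bv$. Centering the origin at the puncture of $\zeta^n(X_1)$ gives a tiling $\bX_1$, and $\bX_2:=\phi_{L^n\bv}(\bX_1)$ centers it at the companion copy; both display the full supertile $\zeta^n(T_{j_0})$ around the origin, so they agree on a ball whose radius grows like $\norm{L}^n$ and hence $\mathrm{Dist}(\bX_1,\bX_2)\to 0$. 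Since $f(\bX_2)=f(\phi_{L^n\bv}\bX_1)=e[L^n\bv\cdot\om]\,f(\bX_1)$ and $\abs{f}\equiv1$, uniform continuity of $f$ forces $\abs{e[(L^{\sf T})^n\om\cdot\bv]-1}=\abs{f(\bX_2)-f(\bX_1)}\to 0$, which is the claim (using $L^n\bv\cdot\om=(L^{\sf T})^n\om\cdot\bv$).

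\emph{Sufficiency} (the limit condition $\Rightarrow$ $\om$ an eigenvalue). Here I construct the eigenfunction as an $L^2$-limit of approximate eigenfunctions built from the tower structure. For each $n$, recognizability (Theorem \ref{thm:Uniq}) lets me decompose every tiling uniquely into order-$n$ supertiles; tracking the order-$n$ supertile containing the origin and a control point $\boldsymbol{q}_n(\bX)$ for it, I set $f_n(\bX)=e[-\om\cdot\boldsymbol{q}_n(\bX)]$, a modulus-one function satisfying $f_n\circ\phi_{\bs}=e[\bs\cdot\om]f_n$ whenever $\bs$ does not move the origin out of that supertile. The only obstruction to $f_n$ being a genuine eigenfunction, and the only source of the $L^2$-increment $f_{n+1}-f_n$, is the mismatch incurred when the origin crosses an order-$n$ supertile boundary. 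The crux is that, with the control points chosen self-consistently under $\zeta$ (so that the order-$n$ supertiles, arranged as in the collapsed tiling, inherit the adjacency pattern of order-$0$ tiles), these displacement vectors are exactly order-$n$ return vectors $L^n\bv$ with $\bv$ in the group generated by $\mathfrak{R}(\zeta)$. Hence the relevant phases are $e[(L^{\sf T})^n\om\cdot\bv]$, which tend to $1$ by hypothesis (multiplicativity extends the limit from $\mathfrak{R}(\zeta)$ to the group it generates), so $\norm{f_{n+1}-f_n}_{L^2}\to 0$; passing to a summable subsequence, the modulus-one $f_n$ converge in $L^2$ to some $f$ with $\abs{f}\equiv 1$. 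Since for each fixed $\bs$ the defect in $f_n\circ\phi_{\bs}=e[\bs\cdot\om]f_n$ is supported on a set of measure $\to 0$ (the origin lies within a single order-$n$ supertile off a vanishing set), the limit $f$ is a nonzero eigenfunction for $\om$.

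\emph{Main obstacle.} The necessity direction is routine once continuity of eigenfunctions is granted. The real work is the bookkeeping in sufficiency: one must choose the control points $\boldsymbol{q}_n$ so that consecutive boundary-crossing increments genuinely reduce to return vectors rather than arbitrary intra-supertile offsets (for which the hypothesis says nothing), and then prove the $L^2$-Cauchy estimate uniformly. This is precisely where recognizability and the self-consistency of control points under the substitution are essential. A cleaner alternative I would keep in reserve is to bypass the explicit eigenfunction and instead show directly, via tower estimates of the same flavour as in Lemma \ref{thm:Factor} and Proposition \ref{thm:ProdBound}, that under the hypothesis the spectral measure $\sigma_f$ of a suitable cylindrical $f$ carries an atom at $\om$, which by the spectral description of weak mixing recalled above is equivalent to $\om$ being an eigenvalue.
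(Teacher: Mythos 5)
First, note that the paper itself does not prove this statement: it is quoted from Solomyak's \emph{Dynamics of self-similar tilings} (Theorem 5.1 there), so there is no in-paper proof to compare against. Your overall strategy (necessity via the growth of matching supertiles around a return vector; sufficiency via control points of the order-$n$ supertile containing the origin) is indeed the strategy of the original proof. However, as written your argument has two genuine gaps, one in each direction.

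\emph{Necessity.} You derive continuity of the eigenfunction from ``minimal and uniquely ergodic,'' but that implication is false in general: there are minimal, uniquely ergodic systems with measurable eigenvalues admitting no continuous eigenfunction. For self-affine tiling systems the continuity of measurable eigenfunctions \emph{is} a theorem, but it is essentially part of the content of the result you are proving, so invoking it here is circular. Moreover, once $f$ is only measurable, evaluating it at the two specific tilings $\bX_1$ and $\bX_2=\phi_{L^n\bv}(\bX_1)$ is meaningless ($f$ and the identity $f\circ\phi_{\bs}=e[\bs\cdot\om]f$ are only defined up to null sets). The standard repair is quantitative: the pair of order-$n$ supertiles $(\zeta^n(X_1),\zeta^n(X_1)+L^n\bv)$ occurs with frequency $\asymp\det(L)^{-n}$ while its support has volume $\asymp\det(L)^{n}$, so the set $A_n$ of tilings whose origin lies deep inside the first member of such a pair has $\mu(A_n)\geq c>0$ uniformly in $n$, and on $A_n$ one has $\mathrm{Dist}(\bX,\phi_{L^n\bv}\bX)\to0$ uniformly. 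Combining this with Lusin's theorem (choose a compact $K$ with $\mu(K)>1-c/3$ on which $f$ is uniformly continuous and $|f|=1$) forces $|e[(L^{\sf T})^n\om\cdot\bv]-1|\to0$. This positive-measure argument is what your two-point argument needs to become.

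\emph{Sufficiency.} The step ``$\norm{f_{n+1}-f_n}_{L^2}\to0$; passing to a summable subsequence, the $f_n$ converge'' is not valid: a sequence whose consecutive increments tend to zero need not be Cauchy, and passing to a subsequence cannot repair this (the increments along a subsequence are sums of many consecutive increments). Since the phases involved are $e[(L^{\sf T})^{n}\om\cdot\bw]$ for $\bw$ in a fixed finite subset of $\mathfrak{R}(\zeta)$, what you actually need is $\sum_n\norm{(L^{\sf T})^{n}\om\cdot\bw}_{\R/\Z}<\infty$. This does follow from the hypothesis, but only via the arithmetic mechanism used in Proposition \ref{prop:spectralestimate} of the paper: writing $V^{\sf T}(L^{\sf T})^n\om=M^nV^{\sf T}\om={\bK}_n+\bm{\epsilon}_n$ with $M$ integral, one gets $\bm{\epsilon}_{n+1}-M\bm{\epsilon}_n\in\Z^N$, hence $\bm{\epsilon}_{n+1}=M\bm{\epsilon}_n$ once all the errors are below the threshold $\delta_0$; since $\bm{\epsilon}_n\to0$, the tail of the sequence must lie in the strictly contracting subspace of $M$ (no contribution from eigenvalues of modulus $\geq1$ is compatible with $M^m\bm{\epsilon}_n\to0$), which upgrades the qualitative convergence to geometric decay and gives summability. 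Without this step the infinite product defining $f$ need not converge, and the construction collapses. With it, the $f_n$ in fact converge uniformly off a null set, which also recovers the continuity used in the other direction.
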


\section{Modulus of continuity for strongly totally non-Pisot substitutions}
We will assume $L$ is diagonalizable over $\C$. Until here we have not made any assumption on the location of the punctures on the prototiles. From now on we will take them as \textbf{control points}: for each prototile, choose one tile in its image by the substitution $\zeta$. The preimage of this tile is a compact subset of the original tile. Repeating this process indefinitely, we obtain a sequence of nested nonempty compact sets whose diameter goes to zero. The only point in the intersection of the sequence is the control point of the prototile, and from now on, its puncture. Since the substitution is primitive, we may assume that the chosen tile in every image is one of some specific type, say $T^*$.

Following \cite{Kenyon,lee2008pure}, let $\bX_0\in \mathfrak{X_{\zeta}}$ be a self-affine tiling and let $\mathcal{C}$ be the set of its control points. Since we took the puncture of each tile as being the control point associated to some particular tile type, we have that for any $T,S \in \bX_0 $,
\[
L(\text{punc}(T)-\text{punc}(S)) \in \mathfrak{R}(\bX_0).
\]
Consider $\mathcal{D} = \left\{ \text{punc}(T)-\text{punc}(S) \,\middle|\, T,S \in \bX_0, T\neq S, T\cap S\neq \emptyset \right\}$. Since tilings in $\mathfrak{X_{\zeta}}$ have f.l.c, $\mathcal{D}$ is a finite set. By repetitivity, modulo taking a higher power of the substitution $\zeta$, we may assume that for each $\boldsymbol{d} \in \mathcal{D}$ and $T_j\in\A$ we have $L\boldsymbol{d}\in\zeta(T_j)$. In particular, for all $\boldsymbol{d} \in \mathcal{D}$, $L\boldsymbol{d} \in \mathfrak{GR}(\zeta)$. Note also that the free abelian group generated by the set of control points $G=\langle \mathcal{C} \rangle$, is also generated by $\mathcal{D}$.

Let us fix ${\bv}_1,\dots,{\bv}_N \in \mathcal{D}$ a set of generators for $G$. 
Let $V = ({\bv}_1,\dots,{\bv}_N)$ be the $d\times N$ full rank matrix whose columns are the previously fixed generators. There exists a $N\times N$ integer matrix $M$ such that
\[
LV = VM
\]
(we keep calling $L$ the matrix representing the expansion $L$ in the canonical basis). It is proved in Lemma 3.3 of \cite{Kenyon} that $M$ is diagonalizable. For $\om\in\R^d$, consider the norm $\norm{\om}_V := \max_{i=1,\dots,N}\abs{\om\cdot{\bv}_i}$.
\begin{prop}\label{prop:spectralestimate}
	Let $\zeta$ be a strongly totally non-Pisot substitution and $f$ be a cylindrical function of $\mathfrak{X}_\zeta$ as in (\ref{cylindrical}), with all $\psi_i$ being Lipchitz. There exists $\gamma>0$ only depending on $\zeta$ such that for any $\om\neq{\bzero}$ there exist explicit constants $C=C(\zeta) > 0$ and $r_0 = r_0(\norm{\om}_V)>0$ such that
	\[
	\sigma_f (C^{\om}_r) \leq C \norm{f}_{\textup{Lip}}^2 \log(1/r)^{-\gamma} \quad \forall r < r_0.
	\]
\end{prop}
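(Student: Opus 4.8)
The plan is to chain the pointwise Birkhoff bound of Corollary \ref{cor:sumbound} with the comparison Lemma \ref{lemmma: spectralmeasure}, and then to extract logarithmic decay from the resulting Riesz-type product by a Diophantine argument powered by the strongly totally non-Pisot hypothesis. First I would feed Corollary \ref{cor:sumbound} into $G_R$: since that estimate on $\abs{S^f_R(\bX,\om)}$ is uniform in $\bX$ and $\mu$ is a probability measure,
\[
G_R(f,\om) = \frac{1}{R^d}\norm{S^f_R(\cdot,\om)}^2_{L^2} = \mathcal{O}\left(\norm{f}_{\textup{Lip}}^2\, R^d \prod_{l=0}^{\floor{\log_{\norm{L}}R}}\Big(1 - \max_{\bv\in\mathfrak{GR}(\zeta)}\mathfrak{m}\norm{\om\cdot L^l\bv}^2_{\R/\Z}\Big)^2\right),
\]
where I absorbed $(1+\norm{\om})^{-2}\le 1$. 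Setting $\Omega(1/R)$ equal to the squared product (non-decreasing in $1/R$, since shrinking $R$ only removes factors that are all $\le 1$) and applying Lemma \ref{lemmma: spectralmeasure} with $r=1/(2R)$ gives
\[
\sigma_f(C^\om_r) \le C\norm{f}^2_{\textup{Lip}}\prod_{l=0}^{\floor{\log_{\norm{L}}(1/2r)}}\Big(1 - \max_{\bv\in\mathfrak{GR}(\zeta)}\mathfrak{m}\norm{\om\cdot L^l\bv}^2_{\R/\Z}\Big)^2 .
\]

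Using $1-x\le e^{-x}$, the product is at most $\exp\!\big(-\mathfrak{m}\sum_{l=0}^{n} a_l\big)$ with $n=\floor{\log_{\norm{L}}(1/2r)}\asymp\log(1/r)$ and $a_l=\max_{\bv\in\mathfrak{GR}(\zeta)}\norm{\om\cdot L^l\bv}^2_{\R/\Z}$. Thus it suffices to prove a lower bound $\sum_{l=0}^{n} a_l\ge \gamma'\log_2 n$ valid for $n$ beyond a threshold depending on $\norm{\om}_V$; since $\exp(-c\log_2 n)=n^{-c/\ln 2}$ and $n\asymp\log(1/r)$, this yields exactly the claimed $\log(1/r)^{-\gamma}$ after squaring. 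To analyze $a_l$ I would pass to lattice dynamics: the generators $\bv_i\in\mathcal{D}$ satisfy $L\bv_i\in\mathfrak{GR}(\zeta)$, and transposing $LV=VM$ gives $V^{\sf T}(L^{\sf T})^l = (M^{\sf T})^l V^{\sf T}$. Writing $y=V^{\sf T}\om$ (nonzero because the $\bv_i$ span $\R^d$, so that $\norm{\om}_V=\norm{y}_\infty$), one has $\om\cdot L^l\bv_i=\big((M^{\sf T})^l y\big)_i$, whence $a_l\ge \operatorname{dist}_\infty\!\big((M^{\sf T})^{l+1}y,\Z^N\big)^2$. Everything therefore reduces to showing the orbit of $y$ under the integer endomorphism $M^{\sf T}$ of $\R^N/\Z^N$ accumulates a logarithmically growing amount of squared distance from the lattice.

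The main obstacle is precisely this lower bound, and it is where strong total non-Pisotness is essential. By Lemma 3.3 of \cite{Kenyon} the matrix $M$ is diagonalizable with spectrum the Galois conjugates of the eigenvalues of $L$; the hypothesis supplies an eigenvalue $\mu$ of $M$ with $\abs{\mu}>1$ that is not an eigenvalue of $L$, hence a genuine expanding direction $E^u$ of $M^{\sf T}$ with rate $\rho=\abs{\mu}>1$. I would use a propagation/rigidity estimate: if on a window $[l_1,l_2]$ every coordinate of $(M^{\sf T})^l y$ lies within a small $\epsilon$ of $\Z$, then matching integer parts across the integer recurrence obeyed by the coordinates forces $z_{l+1}=M^{\sf T}z_l$ exactly, so the fractional parts evolve linearly under $M^{\sf T}$ while staying $\le\epsilon$; expansion along $E^u$ then constrains the expanding component of the fractional part at time $l_1$ to be $\lesssim\epsilon\,\rho^{-(l_2-l_1)}$. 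Theorem \ref{thm:eigenvalues} together with weak mixing (Theorem \ref{wmcondition}) excludes $y$ from the stable subspace, so this rigidity cannot persist forever; quantifying it shows a near-lattice window starting near index $l$ can have length only $\mathcal{O}(l)$, so that indices with $a_l$ bounded below by a fixed constant $c_0$ occur at geometric density. Each geometric block $[2^k,2^{k+1})$ then contributes at least $c_0$, giving $\sum_{l=0}^{n} a_l\ge c_0\log_2 n - C(\norm{\om}_V)$, and choosing $r_0=r_0(\norm{\om}_V)$ to absorb the initial scales yields the logarithmic lower bound with $\gamma$ controlled by $\mathfrak{m}$ and $c_0$. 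The truly delicate point—and the reason the rate is merely logarithmic rather than a power of $r$—is this control on how long a near-lattice window may persist as a function of its starting scale, together with making every constant uniform in $\om$.
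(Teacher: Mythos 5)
Your overall architecture is the same as the paper's: Corollary \ref{cor:sumbound} plus Lemma \ref{lemmma: spectralmeasure} reduce the proposition to a lower bound of order $\log n$ for $\sum_{l\le n}\max_{\bv\in\mathfrak{GR}(\zeta)}\norm{\om\cdot L^l\bv}^2_{\R/\Z}$; this is obtained by passing to the integer matrix $M$ with $LV=VM$ and showing that a maximal ``near-lattice window'' of the orbit $(M^{\sf T})^l V^{\sf T}\om$ beginning at index $l$ has length $\mathcal{O}(l+\log\norm{\om}_V)$, so that indices where the distance to $\Z^N$ exceeds a fixed $\delta_0$ occur with geometric density and each dyadic block contributes a definite amount. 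Your rigidity mechanism (small fractional parts propagate exactly under the integer recurrence, and the expanding eigendirection supplied by the strongly totally non-Pisot hypothesis forces the expanding component at the window's start to be $\lesssim\rho^{-(l_2-l_1)}$) is precisely the paper's.

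The genuine gap is the matching \emph{lower} bound on that expanding component, which you leave at ``quantifying it shows a near-lattice window \dots can have length only $\mathcal{O}(l)$.'' Weak mixing (Theorems \ref{wmcondition} and \ref{thm:eigenvalues}) only shows each window is finite; it yields no rate and hence no modulus of continuity, so as written the plan does not close. The missing idea is arithmetic: because the image of $V^{\sf T}$ is $M^{\sf T}$-invariant with spectrum that of $L$ (equivalently, $p_M(M)V^{\sf T}\om=\bzero$ after passing to the companion matrix of the minimal polynomial $p_M$ of degree $t$), the projection onto an expanding eigenvector $\bbe_j$ with eigenvalue $\lambda_j\notin\text{spec}(L)$, $\abs{\lambda_j}>1$, annihilates the orbit itself and sends the error vector to $-{\bK}_n^{{\bv}_i}\cdot\bbe^{*}_{j}/(\bbe_{j}\cdot\bbe^{*}_{j})$, i.e.\ to a value $Q^{{\bv}_i}_n(\lambda_j)$ of a nonzero integer polynomial of degree $t-1$ and height at most $h\norm{\om}_V\norm{L}^{n}$. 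Garsia's separation lemma (Lemma 1.51 of \cite{Garsia}) then gives $\abs{Q^{{\bv}_i}_n(\lambda_j)}\ge c\,\norm{\om}_V^{-t}\norm{L}^{-nt}$, and comparing with your upper bound $\lesssim\rho^{-m}$ produces the window-length estimate $m\le Z(n+\log\norm{\om}_V)$. You would also need to check that this projection is nonzero at each window start, which is where the integrality of ${\bK}_n$ and the degree count enter. A secondary, fixable point: having discarded the Fourier decay $(1+\norm{\om})^{-1}$, you must absorb the entire $\norm{\om}_V$-dependence into the threshold $r_0$ to keep $C=C(\zeta)$; the paper instead keeps that decay for large $\norm{\om}_V$ (its Case 2), but your route survives provided the good indices in $[n_1,n]$ still number $\gtrsim\log n$ for $n\ge n_1^2$, say.
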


\begin{proof}
	Write
	\begin{equation}\label{decomposition}
	{\bK}_n^V + \bm{\epsilon}^V_n = V^{\sf T} (L^{\sf T})^n \om = M^n V^{\sf T} \om = M^n \widetilde{\om}, 
	\end{equation}
	where ${\bK}_n^V\in \Z^N$ is the closest integer lattice point to $ V^{\sf T} (L^{\sf T})^n \om$. By the remark just after equation (\ref{rm:infty}), consider $n_0 = n_0(\om)$ such that for all $n\geq n_0$ we have ${\bK}_n^V \neq {\bzero}$, which is guaranteed for 
	\begin{equation}
		n_0 \geq Z_1 \max(1,\log(\norm{\om}^{-1}_V)),
	\end{equation}
	for some $Z_1 = Z_1(\zeta)>0$. From now on we take $n\geq n_0$. Let $p_M(X) = X^t - c_{t-1}X^{t-1} - \dots - c_0 \in \Z[X]$ be the minimal polynomial of the matrix $M$, that is, the monic polynomial $q$ of lowest degree in $\mathbb{Q}[X]$ such that $q(M) = 0$. It is a consequence of the Gauss lemma that in fact, being $M$ an integer matrix, $p_M$ has integer coefficients. Since all roots of $p_M$ are simple, its companion matrix $\mathfrak{C}_{p_M}$ is diagonalizable and has simple eigenvalues, where
	{\small
	\[
	\mathfrak{C}_{p_M} = \begin{pmatrix}
	0 & 1 & 0 & \dots & 0 \\
	0 & 0 & 1 & \dots & 0 \\
	\vdots & \vdots & \vdots & \ddots & \vdots\\
	0 & 0 & 0 & \dots & 1 \\
	c_{0}& c_{1}& c_{2}& \dots & c_{t - 1}
	\end{pmatrix}.
	\]}
	
	We have for any $n\geq 0$, $M^n p_M(M) \widetilde{\om} = 0$. This implies
	\begin{align*}
		&M^{n+t}\widetilde{\om} = c_{t-1}M^{n+t-1}\widetilde{\om} + \dots + c_{0}M^{n}\widetilde{\om}\\
		&\implies {\bK}_{n+t}^V - \sum_{i=0}^{t-1} c_i {\bK}_{n+i}^V = - \bm{\epsilon}^V_{n+t} + \sum_{i=0}^{t-1} c_i \bm{\epsilon}^V_{n+i}.
	\end{align*}
	Since all $c_i\in \Z$, the left-hand side of the last equation belongs to $\Z^N$. Then, $\norm{\bm{\epsilon}^V_{n}}_\infty, \dots,$ $\norm{\bm{\epsilon}^V_{n+t}}_\infty < \delta_0 := 1/\left( 1+\sum_{i=0}^{t-1} \abs{c_i} \right)$ implies 
	\small{
	\begin{equation}\label{recurrence}
	\underbrace{ \begin{pmatrix}
	(\bm{\epsilon}^V_{n+1})^{\sf{T}}\\
	\vdots\\
	(\bm{\epsilon}^V_{n+t})^{\sf{T}}
	\end{pmatrix} }_{=:\bm{\mathcal{E}}_{n+1} } = \mathfrak{C}_{p_M} \underbrace{ \begin{pmatrix}
	(\bm{\epsilon}^V_{n})^{\sf{T}}\\
	\vdots\\
	(\bm{\epsilon}^V_{n+t-1})^{\sf{T}}
	\end{pmatrix} }_{=:\bm{\mathcal{E}}_{n}  }.
	\end{equation}}Let us call $\bm{\mathcal{E}}_{n}^{{\bv}_i}$ to the $i$th column of $\bm{\mathcal{E}}_{n} $. In a similar fashion, consider the column vectors ${\bK}_n^{{\bv}_i}$ of the matrix whose rows are given by the vectors $({\bK}_{n}^V)^{\sf{T}},\dots,({\bK}_{n+t-1}^V)^{\sf{T}}$, that is,
	\small{
	\[
	\left( {\bK}_n^{{\bv}_1},\dots, {\bK}_n^{{\bv}_N} \right) = \begin{pmatrix}
	({\bK}_{n}^V)^{\sf{T}}\\
	\vdots\\
	({\bK}_{n+t-1}^V)^{\sf{T}}
	\end{pmatrix}.
	\]}We remark that for all $n\geq0$ there exists ${\bv}_i$ such that $\bm{\mathcal{E}}_{n}^{{\bv}_i} \neq {\bzero}$, since otherwise equation (\ref{decomposition}) would imply $\bm{\mathcal{E}}_{n+m}^{{\bv}_i} = {\bzero}$ for all $m\geq0$. This implies $\om$ is a non-trivial eigenvalue of $(\mathfrak{X_{\zeta}},\phi)$ by Theorem \ref{thm:eigenvalues}, since the same would apply for any return vector, and in consequence, a contradiction to Theorem \ref{wmcondition}.
		
	Let $\{\bbe_{k}\}^{t}_{k=1}$, $\{\bbe^{*}_{k}\}^{t}_{k=1}$ be the eigenbasis of $\mathfrak{C}_{p_M}$ and its dual basis respectively given by
	\small{
	\[
	\bbe_k = \begin{pmatrix}
	1\\
	\lambda_k \\
	\vdots \\
	\lambda_k^{t-2} \\
	\lambda_k^{t-1}
	\end{pmatrix}
	, \quad \bbe^{*}_k = \begin{pmatrix}
	c_0 \lambda_k^{t-2} \\
	c_1 \lambda_k^{t-2} + c_0 \lambda_k^{t-3} \\
	\vdots \\
	c_{t-2}\lambda_k^{t-2} + \dots + c_1\lambda_k + c_0\\
	\lambda_k^{t-1}
	\end{pmatrix}.
	\]}Let $\Lambda=\Lambda_1\cup\dots\cup \Lambda_p$ be the spectrum of $L$. Write $\C^t = U\bigoplus W$, where $U$ and $W$ are direct sums of eigenspaces of $\mathfrak{C}_{p_M}$, and the subspace $U$ corresponds to the direct sum of eigenspaces associated to the eigenvalues of $L$. Since $\Lambda$ is a strongly totally non-Pisot family, there exists an eigenvector $\bbe_{j}\in W$ such that the associated eigenvalue satisfies $\abs{\lambda_j}>1$. We have
	\[
	\bm{\mathcal{E}}_{n}^{{\bv}_i} \cdot \bbe^{*}_{j} = - {\bK}_n^{{\bv}_i} \cdot \bbe^{*}_{j}.
	\]	
	Since ${\bK}_n^{{\bv}_i}$ has integer entries, ${\bK}_n^{{\bv}_i} \cdot \bbe^{*}_{j}$ is equal to $Q^{{\bv}_i}_n(\lambda_j)$, for some integer polynomial $Q^{{\bv}_i}_n$ of degree $t-1$. In particular, $Q^{{\bv}_i}_n(\lambda_j)\neq0$, since the minimal polynomial of $\lambda_j$ divides $p_M$ which is of degree $t$ and $\text{gcd}(t,t-1) = 1$, that is, $Q^{{\bv}_i}_n$ can not divide $p_M$. Since $V$ and $t$ only depend on the substitution $\zeta$, there exists a constant $h = h(\zeta)$ such that
	\[
	H(Q^{{\bv}_i}_n) \leq h\norm{\om}_V\norm{L}^{n},
	\]
	where for a polynomial $P(x) = b_0 + \dots + b_r x^r$, we denote its height by $H(P) = \max_{k=0,\dots,r}\, \abs{b_k}$. Then
	\[
	\bm{\mathcal{E}}_{n}^{{\bv}_i} = \sum_{k=1}^t a_k \bbe_{k} \implies
	\abs{a_j} = \dfrac{ \abs{\bm{\mathcal{E}}_{n}^{{\bv}_i} \cdot \bbe^{*}_{j} }}{ \abs{\bbe_{j} \cdot \bbe^{*}_{j} }} = \dfrac{ \abs{ {\bK}_n^{{\bv}_i} \cdot \bbe^{*}_{j} } }{ \abs{\bbe_{j} \cdot \bbe^{*}_{j} }} = \dfrac{ \abs{Q_n^{{\bv}_i}(\lambda_j) } }{ \abs{\bbe_{j} \cdot \bbe^{*}_{j} }}.
	\]
	
	If $\norm{\bm{\mathcal{E}}_{n}^{{\bv}_i}}_\infty,\dots,\norm{\bm{\mathcal{E}}_{n+m}^{{\bv}_i}}_\infty < \delta_0$, for some $m\geq 1$, then 
	\[
	\bm{\mathcal{E}}_{n+m}^{{\bv}_i}  = \mathfrak{C}_{p_M}^m \bm{\mathcal{E}}_{n}^{{\bv}_i}.
	\]
	
	Let us consider the norm $\norm{\cdot}$ of $\C^t$ corresponding to the maximum of the absolute values of the coefficients in the expansion in the eigenbasis $\{\bbe_{k}\}^{t}_{k=1}$. The inequality from  Lemma 1.51 in \cite{Garsia}, allow us to bound from below the value of $\abs{Q_n^{{\bv}_i}(\lambda_j)}$, yielding
	\[
	\norm{\bm{\mathcal{E}}_{n+m}^{{\bv}_i}} \geq \abs{a_j}\abs{\lambda_j}^{m} \geq c\norm{\om}^{-t}_V\norm{L}^{-nt}\abs{\lambda_j}^m,
	\]
	where $c = c(\zeta)>0$. Since all norms in $\C^t$ are equivalent and $\norm{\bm{\mathcal{E}}_{n+m}^{{\bv}_i}}_\infty \leq 1/2$, the last inequality implies that $m\leq Z_2(n + \log(\norm{\om}_V))$, for some integer constant $Z_2=Z_2(\zeta)>1$ only depending on the substitution. Consider $n_1 = n_1(\om) = \max(n_0,Z_2\log(\norm{\om}_V))$. Set $Z = \max(Z_1,Z_2)$. We have proved that for all $n\geq n_1$
	\begin{equation}
		\max(\norm{\bm{\mathcal{E}}_{n}^{{\bv}_i}}_\infty, \dots, \norm{\bm{\mathcal{E}}_{Z n}^{{\bv}_i}}_\infty) \geq \delta_0.
	\end{equation}
	\textbf{Case 1:} $\log(R)^{\mathfrak{m}\delta^2_0/2}\geq \log (1+\max(\norm{\om}_V^Z, \norm{\om}_V^{-Z}))$. By Corollary \ref{cor:sumbound} we have
	\small{
	\begin{align*}
		&\abs{S^f_R(\bX,\om)} \\
		&= \mathcal{O} \left(\norm{f}_{\text{Lip}} R^d \prod^{\floor{\log_{\norm{L}}(R)}}_{l=0} 1 - \max_{{\bv}\in \mathfrak{GR}(\zeta)} \mathfrak{m} \norm{\om \cdot L^l{\bv}}^2_{\R/\Z} \right)\\
		&= \mathcal{O} \left( \norm{f}_{\text{Lip}} R^d \exp\left( -\mathfrak{m}\sum^{\floor{\log_{\norm{L}}(R)}}_{l=0} \max_{i=1,\dots,N} \norm{\om \cdot L^l{\bv}_i}^2_{\R/\Z} \right) \right)\\
		&= \mathcal{O} \left( \norm{f}_{\text{Lip}} R^d \left( \log_{\norm{L}}(R) \right)^{-\mathfrak{m}\delta_0^2/\log(Z)} \log(1+\max(\norm{\om}^Z_V,\norm{\om}_V^{-Z})^{1/\log(Z)} \right)\\
		&= \mathcal{O} \left( \norm{f}_{\text{Lip}} R^d \left( \log(R) \right)^{-\mathfrak{m}\delta_0^2/\log(Z)} \left( \log(R) \right)^{\mathfrak{m}\delta_0^2/2\log(Z)} \right) \\
		&= \mathcal{O} \left( \norm{f}_{\text{Lip}} R^d \left( \log(R) \right)^{-\mathfrak{m}\delta_0^2/2\log(Z)} \right)\\
		&= \mathcal{O} \left( \norm{f}_{\text{Lip}} R^d \left( \log(R) \right)^{-\mathfrak{m}\delta_0^2/2Z}. \right).
	\end{align*}}
	\textbf{Case 2:} $\norm{\om}_V\geq 1$ and $\log(R)^{\mathfrak{m}\delta^2_0/2}\leq \log (1+\norm{\om}_V^Z)$. Again, by Corollary \ref{cor:sumbound}
	\begin{align*}
	\abs{S^f_R(\bX,\om)} &= \mathcal{O} \left(\norm{f}_{\text{Lip}} (1+\norm{\om}_V)^{-1} R^d \right)\\
	&= \mathcal{O} \left(\norm{f}_{\text{Lip}} R^{d} \log(R)^{-\mathfrak{m}\delta^2_0/2Z} \right).
	\end{align*}
	In both cases, $\abs{S^f_R(\bX,\om)} = \mathcal{O}(\norm{f}_{\text{Lip}} R^d \log(R)^{-\eta})$ for all $R\geq R_0 = R_0(\norm{\om}_V)$, for $\eta := \mathfrak{m}\delta^2_0/2Z > 0$ and $R_0$ defined by 
	\[
	\log(R)^{\eta} = \log (1+\max(\norm{\om}_V^Z, \norm{\om}_V^{-Z})).
	\]
	By Lemma \ref{lemmma: spectralmeasure}, we can deduce that there exist $C,\gamma>0$ depending only on $\zeta$ such that
	\[
	\sigma_f (C^{\om}_r) \leq C\norm{f}^2_{\text{Lip}}\log(1/r)^{-\gamma}.
	\]
	for all $0<r<r_0:=1/2R_0$, finishing the proof.
\end{proof} 
\begin{remark}
	Note that following Case 2 in the proof above, the dependence of $r_0$ on $\norm{\om}$ is only on a lower bound of this quantity. This means that to get uniform bounds in the whole $\R^d$ to prove Theorems \ref{thm:main} and \ref{thm:mainselfaffine}, we only need to obtain estimates of $\sigma_f$ on a neighborhood of $\om = {\bzero}$. This is done in the next two sections separately for the self-similar and the self-affine case.
\end{remark}

\section{Uniform bounds: self-similar case}
In this section we prove Theorem \ref{thm:main}, which gives uniform estimates of the spectral measures in all $\R^d$. This is done by combining the estimates of the spectral measures obtained in Proposition \ref{prop:spectralestimate} with estimates at $\om={\bzero}$ in Corollary \ref{cor:estimateorigin} below, which are easily deduced from deviations of usual Birkhoff integrals for self-similar tilings proved in \cite{bufetov2013limit}. We deduce in addition Theorem \ref{thm:corr} from Theorem \ref{thm:main}. In addition to the hypotheses from Section 3, we also assume that the substitution is self-similar. We recall that we denote by $C^{\om}_r$ the set $[\omega_1-r,\omega_1+r]\times\dots\times[\omega_d-r,\omega_d+r]$, for $\om = (\omega_1,\dots,\omega_d)\in\R^d$.

\subsection{Local estimate at $\om = {\bzero}$}
The estimates at the origin are a direct consequence on the deviation of Birkhoff integrals. Different phenomena occur depending on the two largest eigenvalues, more details are found in \cite{bufetov2013limit}. The next statement covers all cases.
\begin{theorem}[A. Bufetov and B. Solomyak,  \cite{bufetov2013limit}, Corollary 4.5] \label{thm:devselfsimilar}
	Let $f$ be a cylindrical function with zero mean. There exist $C>0$ and $\alpha\in (d-1,d)$ depending only on the substitution such that for $R\geq 2$ and any $\bX\in\mathfrak{X}_\zeta$,
	\begin{equation*}
		\left| \int_{C^{\bzero}_R} f(\phi_{{\bs}}(\bX)) d{\bs}  \right| \leq C \norm{f}_1 R^\alpha.
	\end{equation*}
\end{theorem}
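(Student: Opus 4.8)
The plan is to reduce the Birkhoff integral to a weighted count of tiles and then exploit the renormalization provided by the substitution matrix $\mathcal{S}_\zeta$, keeping track of the cancellation forced by the zero mean hypothesis. Writing $f = \sum_{i=1}^m \psi_i \mathds{1}_{\mathfrak{X}_{T_i}}$ as in (\ref{cylindrical}) and setting $w_i = \int_{\text{supp}(T_i)} \psi_i(\bs)\, d\bs$, the splitting used in (\ref{error})--(\ref{fourier}), now with $\om = \bzero$ so that the oscillatory factor disappears, gives
\[
\int_{C^{\bzero}_R} f(\phi_{\bs}(\bX))\, d\bs = \sum_{X_j \,\in\, ]C_R[^{\bX}} w_{\mathrm{type}(X_j)} + \Delta(R), \qquad \Delta(R) = \mathcal{O}(R^{d-1}),
\]
the error coming from the boundary layer $C_R \setminus \text{supp}(]C_R[^{\bX})$ exactly as estimated in Lemma \ref{thm:Factor}. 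Each $\abs{w_i}$ is bounded by $\norm{f}_1$ up to a constant depending only on $\zeta$, which will produce the factor $\norm{f}_1$ in the end. The decisive point is that, since the frequencies of tile types are governed by the Perron eigenvector $\bu$ of $\mathcal{S}_\zeta$, the zero mean condition $\int f\, d\mu = 0$ says precisely that the vector $w = (w_1,\dots,w_m)$ is orthogonal to $\bu$.

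Next I would decompose $]C_R[^{\bX} = \bigsqcup_{k=0}^{n} \mathcal{R}^k(]C_R[^{\bX})$ using the tower structure of Section 3, with $n \approx \log_{\norm{L}}(R) = \log_\lambda(R)$. The weighted count over one order-$k$ supertile of type $T_j$ equals $(w^{\sf T}\mathcal{S}_\zeta^{\,k})_j$, so level $k$ contributes at most $\#\mathcal{R}^k \cdot \max_j \abs{(w^{\sf T}\mathcal{S}_\zeta^{\,k})_j}$. Here the zero mean cancellation is essential: because $w \perp \bu$, the Perron part of $\mathcal{S}_\zeta^{\,k}$, which carries the dominant growth $\theta^k = \lambda^{kd}$, is annihilated, leaving $\abs{w^{\sf T}\mathcal{S}_\zeta^{\,k}} = \mathcal{O}(\norm{f}_1\,\abs{\theta_2}^{k})$, where $\theta_2$ is the second largest eigenvalue in modulus of $\mathcal{S}_\zeta$ (up to a polynomial factor in $k$ if $\mathcal{S}_\zeta$ is not diagonalizable on this subspace). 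The same boundary shell count as in Lemma \ref{thm:Factor} gives $\#\mathcal{R}^k(]C_R[^{\bX}) = \mathcal{O}((R/\lambda^{k})^{d-1})$. Multiplying and summing, the contribution of level $k$ is $\mathcal{O}\big(\norm{f}_1\, R^{d-1}(\abs{\theta_2}/\lambda^{d-1})^{k}\big)$.

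Finally I would evaluate the geometric sum $\sum_{k=0}^{n} R^{d-1}(\abs{\theta_2}/\lambda^{d-1})^{k}$. If $\abs{\theta_2} > \lambda^{d-1}$ the top term $k=n$ dominates and, using $\lambda^{n} \approx R$, the sum is $\mathcal{O}(R^{\log_\lambda \abs{\theta_2}})$; if $\abs{\theta_2} \le \lambda^{d-1}$ it is $\mathcal{O}(R^{d-1})$, with an extra $\log R$ in the borderline case. Since $\theta_2$ is strictly subdominant, $\log_\lambda \abs{\theta_2} < \log_\lambda \theta = d$, so in every case the bound is $\mathcal{O}(\norm{f}_1 R^{\alpha})$ for any chosen $\alpha \in (d-1,d)$ with $\alpha \ge \log_\lambda \abs{\theta_2}$; the lower endpoint $d-1$ is forced by the unavoidable boundary term $\Delta(R)$ and the upper endpoint $d$ by the zero mean cancellation, which is exactly why the statement claims some $\alpha$ in the open interval rather than a sharp value. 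The main obstacle will be the rigorous bookkeeping of the tower and boundary counts uniformly in $\bX$, together with controlling the subdominant spectral growth when $\mathcal{S}_\zeta$ carries Jordan blocks of modulus $\abs{\theta_2}$ or when $\abs{\theta_2} = \lambda^{d-1}$; in those cases one absorbs the resulting polynomial and logarithmic corrections into the slightly larger exponent $\alpha$.
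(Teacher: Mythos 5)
The paper does not actually prove this statement: Theorem \ref{thm:devselfsimilar} is imported verbatim from Bufetov and Solomyak \cite{bufetov2013limit} (their Corollary 4.5) and used as a black box to derive the local estimate at $\om=\bzero$ (Corollary \ref{cor:estimateorigin}). So there is no internal proof to compare against; what you have written is in effect a reconstruction of the argument of the cited reference, and it is the right argument. Your three ingredients --- (i) reduction of the Birkhoff integral to the weighted tile count $\sum_j w_{\mathrm{type}(X_j)}$ plus a boundary error, (ii) the exact decomposition of $]C_R[^{\bX}$ into supertiles of orders $0\le k\le n$ with $\#\mathcal{R}^k(]C_R[^{\bX})=\mathcal{O}((R/\lambda^k)^{d-1})$ exactly as in the proof of Lemma \ref{thm:Factor}, and (iii) the observation that zero mean is precisely orthogonality of $w=(w_1,\dots,w_m)$ to the Perron eigenvector of $\mathcal{S}_\zeta$ (the tile-frequency vector), so that $w^{\sf T}\mathcal{S}_\zeta^{\,k}$ grows like $|\theta_2|^k$ times a polynomial in $k$ rather than like $\theta^k=\lambda^{kd}$ --- are exactly the mechanism behind the deviation estimates of \cite{bufetov2013limit}, and the final bookkeeping $\sum_{k\le n} R^{d-1}(|\theta_2|/\lambda^{d-1})^{k}=\mathcal{O}(R^{\alpha})$ with $\alpha\in(d-1,d)$, $\alpha\ge \log_\lambda|\theta_2|$, is correct and uniform in $\bX$ because the shell counts depend only on the prototile geometry.

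One loose end is worth repairing. You bound $\Delta(R)$ ``exactly as estimated in Lemma \ref{thm:Factor}'', i.e.\ by the Lebesgue measure of the boundary layer $C_R\setminus \mathrm{supp}(]C_R[^{\bX})$; that route gives $\mathcal{O}(\norm{f}_\infty R^{d-1})$, which is \emph{not} dominated by $\norm{f}_1 R^{d-1}$ when the profiles $\psi_i$ are tall and thin. To obtain the stated $\norm{f}_1$ dependence you should instead bound $\abs{\Delta(R)}$ by the number of tiles straddling $\partial C_R$ (still $\mathcal{O}(R^{d-1})$ by the same shell count) times $\max_i\int_{\mathrm{supp}(T_i)}\abs{\psi_i}$, and then use that the tile frequencies are bounded below to convert this maximum into $C_\zeta\norm{f}_1$ --- the same normalization you already invoke for $\abs{w_i}\le C\norm{f}_1$. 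This is a routine fix and does not affect the structure of the proof.
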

From this result it easy to deduce, by means of Lemma \ref{lemmma: spectralmeasure}, an upper bound for the spectral measure at $\om = {\bzero}$.
\begin{corollary}\label{cor:estimateorigin}
	Let $f$ be a cylindrical function with zero mean. There exist constants $c,\gamma > 0$, only depending on the substitution, such that for all $r < 1/2$
	\begin{equation}
		\sigma_f(C^{\bzero}_r) \leq c\norm{f}_{1}^2r^\gamma .
	\end{equation}
\end{corollary}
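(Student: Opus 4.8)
The plan is to feed the uniform Birkhoff deviation bound of Theorem \ref{thm:devselfsimilar} into the spectral comparison Lemma \ref{lemmma: spectralmeasure}, evaluated at the single frequency $\om = \bzero$. The starting observation is that at $\om = \bzero$ the twisted Birkhoff integral collapses to the ordinary one, since $e[\bzero\cdot\bs]=1$, so that
\[
S^f_R(\bX,\bzero) = \int_{C_R} f(\phi_{\bs}(\bX))\,d\bs .
\]
Because $f$ has zero mean, Theorem \ref{thm:devselfsimilar} applies and produces a bound $\abs{S^f_R(\bX,\bzero)} \leq C\norm{f}_1 R^\alpha$, with $\alpha\in(d-1,d)$ depending only on $\zeta$, that is \emph{uniform} over all $\bX\in\mathfrak{X}_\zeta$. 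This uniformity in $\bX$ is precisely what makes the quadratic mean trivial to control.

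First I would bound $G_R(f,\bzero)$. Since the pointwise estimate holds for every tiling and $\mu$ is a probability measure,
\[
G_R(f,\bzero) = \frac{1}{R^d}\int_{\mathfrak{X}_\zeta} \abs{S^f_R(\bY,\bzero)}^2\,d\mu(\bY) \leq \frac{1}{R^d}\bigl(C\norm{f}_1 R^\alpha\bigr)^2 = C^2\norm{f}_1^2\, R^{2\alpha-d},
\]
valid for every $R\geq 2$. To match the hypothesis of Lemma \ref{lemmma: spectralmeasure} I set $\gamma := 2(d-\alpha)$ and $\Omega(x):=x^{\gamma}$. The constraint $\alpha\in(d-1,d)$ forces $\gamma\in(0,2)$, so $\Omega$ is non-decreasing on $[0,1)$ with $\Omega(0)=0$, and the computation above rewrites as $G_R(f,\bzero)\leq \bigl(C^2\norm{f}_1^2\bigr)\,R^{d}\,\Omega(1/R)$, which is exactly the required form with constant $C^2\norm{f}_1^2$ and $R_0=2$.

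Finally I would invoke Lemma \ref{lemmma: spectralmeasure} with $R_0=2$, which for $r\leq 1/(2R_0)=1/4$ yields
\[
\sigma_f(C^{\bzero}_r) \leq \frac{\pi^{2d}C^2}{4^d}\norm{f}_1^2\,(2r)^{\gamma} = c\,\norm{f}_1^2\, r^{\gamma},
\]
with $c=\pi^{2d}2^{\gamma}C^2/4^d$ depending only on $\zeta$. This is the asserted power bound on a neighbourhood of the origin, which by the remark following Proposition \ref{prop:spectralestimate} is all that is ultimately needed; the exact upper cutoff on $r$ is immaterial (the argument naturally gives $r\leq 1/4$, and the stated threshold $1/2$ plays no role in the applications). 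I do not expect any serious obstacle here: the whole content is the collapse at $\om=\bzero$ together with the conversion of a uniform $R^{\alpha}$ deviation into a spectral-measure estimate through the Fejér kernel. The only point genuinely requiring attention is that Theorem \ref{thm:devselfsimilar} delivers a \emph{strictly} subvolume exponent $\alpha<d$, which is what guarantees $\gamma>0$ and hence a true H\"older-type decay rather than a vacuous bound.
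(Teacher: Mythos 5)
Your proposal is correct and is exactly the argument the paper intends: the paper gives no written proof of this corollary beyond the remark that it follows from Theorem \ref{thm:devselfsimilar} via Lemma \ref{lemmma: spectralmeasure}, and your route --- observing that $S^f_R(\cdot,\bzero)$ is the plain Birkhoff integral, squaring the uniform $R^\alpha$ deviation bound to control $G_R(f,\bzero)$, and feeding $\Omega(x)=x^{2(d-\alpha)}$ into the Fej\'er-kernel lemma --- is that deduction. The only (immaterial) discrepancy is the cutoff $r\leq 1/4$ versus the stated $r<1/2$, which you correctly flag as harmless for all subsequent uses.
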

The next theorem is a natural generalization to higher dimensions of Theorem 5.1 in \cite{bufetov2014modulus}.
\begin{theorem}\label{thm:main}
	Let $\zeta$ be a self-similar and strongly totally non-Pisot substitution. Let $f\in L^2(\mathfrak{X}_\zeta,\mu)$ be a Lipschitz and zero mean cylindrical function. There exist constants $r_0,c,\gamma>0$ only depending on the substitution, such that for any $\om\in\R^d$
	\[
	\sigma_f (C^{\om}_r) \leq c \norm{f}^2_{\textup{Lip}}\log(1/r)^{-\gamma},\quad  \forall r< r_0.
	\]
\end{theorem}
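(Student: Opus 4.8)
The plan is to combine the two regimes of estimates we already have: the far-field estimate from Proposition \ref{prop:spectralestimate}, which controls $\sigma_f(C^{\om}_r)$ for every fixed $\om \neq \bzero$ but only for $r$ below a threshold $r_0(\norm{\om}_V)$ depending on $\om$, with the estimate at the origin from Corollary \ref{cor:estimateorigin}, which gives polynomial (hence in particular log-Hölder) decay of $\sigma_f(C^{\bzero}_r)$ for all $r < 1/2$. The whole difficulty, as flagged in the remark after Proposition \ref{prop:spectralestimate}, is that the threshold $r_0$ degenerates as $\om \to \bzero$: there, $\norm{\om}_V^{-Z} \to \infty$, so the defining relation $\log(R_0)^{\eta} = \log(1 + \max(\norm{\om}_V^Z, \norm{\om}_V^{-Z}))$ forces $R_0 \to \infty$, i.e. $r_0 \to 0$. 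Thus Proposition \ref{prop:spectralestimate} alone cannot yield a single $r_0$ valid uniformly in $\om$; what it does give is uniformity once $\norm{\om}_V$ is bounded below, which is exactly the content of the preceding remark.

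Concretely, I would first observe that by Case~2 in the proof of Proposition \ref{prop:spectralestimate} the dependence of $r_0$ on $\norm{\om}_V$ enters only through the lower bound $\norm{\om}_V \geq 1$ (or any fixed positive constant). Hence there is a uniform radius $r_1 = r_1(\zeta) > 0$ such that the log-Hölder bound $\sigma_f(C^{\om}_r) \leq c \norm{f}_{\text{Lip}}^2 \log(1/r)^{-\gamma}$ holds for all $\om$ with $\norm{\om}_V \geq 1$ and all $r < r_1$. It remains to handle the bounded region $\{\,\om : \norm{\om}_V < 1\,\}$, which (since $V$ has full rank, so $\norm{\cdot}_V$ is comparable to the Euclidean norm) is a bounded neighborhood of the origin in $\R^d$.

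For this region the key device is the self-similarity: because $L$ is a similarity with ratio $\lambda$, the exact scaling relation $\sigma_f(C^{L^{\sf T}\om}_{\lambda r}) = \theta \, \sigma_{f}(C^{\om}_r)$-type identity for the spectral cocycle (or, more elementarily, the renormalization $S^f_{\lambda R}(\bX, L^{\sf T}\om)$ expressed through the structure factor) lets me transport the far-field estimate inward. The idea is: given $\om$ with $\norm{\om}_V < 1$ but $\om \neq \bzero$, iterate the transpose expansion $(L^{\sf T})^k$ until $\norm{(L^{\sf T})^k \om}_V \geq 1$, apply the uniform far-field bound there, and pull the estimate back to $\om$ at the cost of a controlled power-of-$\lambda$ factor; the number of iterations is $\mathcal{O}(\log \norm{\om}_V^{-1})$, and the self-similar scaling is exactly homogeneous so this factor does not blow up. Simultaneously, the genuinely small frequencies $\om$ lying in a fixed neighborhood of $\bzero$ are covered directly by Corollary \ref{cor:estimateorigin}, whose bound holds for all $r < 1/2$ with no $\om$-dependent threshold. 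Gluing the finitely many regimes and taking $r_0 = \min(r_1, 1/2, \dots)$ and the smallest of the exponents $\gamma$ produces the uniform statement.

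\textbf{The main obstacle} I anticipate is making the transition across the annulus $\{\norm{\om}_V \approx 1\}$ seamless: Corollary \ref{cor:estimateorigin} gives control only very near $\bzero$, Proposition \ref{prop:spectralestimate} only for $\norm{\om}_V$ bounded below, and one must verify that the self-similar renormalization carries the log-Hölder modulus (not merely a Hölder or weaker modulus) from the outer region to the inner region without degrading the exponent $\gamma$ in an $\om$-dependent way. In the self-similar case this is exactly where the hypothesis that $L$ is a \emph{similarity} is essential — it guarantees that the cube $C^{\om}_r$ scales isotropically to $C^{(L^{\sf T})\om}_{\lambda r}$, so that the Fejér-kernel comparison underlying Lemma \ref{lemmma: spectralmeasure} is preserved under renormalization. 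The remaining steps are then routine bookkeeping of constants, and the final bound follows by taking $r_0$ and $\gamma$ to be the worst (respectively smallest) values over the finitely many cases.
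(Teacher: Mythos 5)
Your three-way division into regimes (far field via Case 2, intermediate via Case 1, near-origin via the estimate at $\bzero$) is the right skeleton, but the way you close the near-origin regime has a genuine gap. First, Corollary \ref{cor:estimateorigin} bounds $\sigma_f(C^{\bzero}_\rho)$ only for cubes \emph{centered at the origin}; it does not "cover directly" the frequencies $\om\neq\bzero$ lying in a \emph{fixed} neighborhood of $\bzero$. If you enclose $C^{\om}_r\subseteq C^{\bzero}_{\norm{\om}_V+r}$ for such a fixed $\om$, the corollary gives $\sigma_f(C^{\om}_r)\leq c(\norm{\om}_V+r)^{\gamma}\approx c\norm{\om}_V^{\gamma}$, which does not tend to $0$ as $r\to0$ and so cannot produce a modulus of continuity at $\om$. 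Second, the renormalization you invoke rests on a scaling identity of the type $\sigma_f(C^{L^{\sf T}\om}_{\lambda r})=\theta\,\sigma_f(C^{\om}_r)$ which does not hold: the self-similarity acts on twisted Birkhoff integrals through the matrix products $\Pi_n(\om)$, and that information is already fully spent in Proposition \ref{thm:ProdBound} and Corollary \ref{cor:sumbound}. There is no exact quasi-invariance of $\sigma_f$ itself under $L^{\sf T}$ for a fixed cylindrical $f$, and even formally the $\mathcal{O}(\log\norm{\om}_V^{-1})$ iterations needed to push $\om$ out of the unit ball would cost a factor polynomial in $\norm{\om}_V^{-1}$, which is unbounded as $\om\to\bzero$. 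So neither mechanism you propose actually handles a fixed small nonzero frequency.

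The missing observation — and the whole content of the paper's proof — is that the set of frequencies not already covered by Cases 1 and 2 of Proposition \ref{prop:spectralestimate} is not a fixed neighborhood of the origin but an $r$-\emph{dependent} one that shrinks as $r\to0$. Indeed, the only remaining regime is $\om\neq\bzero$, $\norm{\om}_V<1$ and $\log(R)^{\mathfrak{m}\delta_0^2/2}\leq\log(1+\norm{\om}_V^{-Z})$ with $R=1/2r$, and this inequality forces $\norm{\om}_V<\log((2r)^{-1})^{-\eta}$ for $\eta=\mathfrak{m}\delta_0^2/2Z$. For such $\om$ the crude containment $C^{\om}_r\subseteq C^{\bzero}_{\norm{\om}_V+r}$ combined with the \emph{polynomial} bound of Corollary \ref{cor:estimateorigin}, applied at the radius $\norm{\om}_V+r\lesssim\log(1/r)^{-\eta}$, gives $\sigma_f(C^{\om}_r)\lesssim\norm{f}_1^{2}\log(1/r)^{-\eta\gamma}$, which is exactly the claimed log-H\"older bound; no renormalization of the spectral measure is needed, and the polynomial (rather than merely log-H\"older) quality of the origin estimate is what absorbs the logarithmically small displacement $\norm{\om}_V$.
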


\begin{proof}
	By Proposition \ref{prop:spectralestimate} and Corollary \ref{cor:estimateorigin}, the only case left to study is $\om\neq0$, $\norm{\om}^Z_V < 1$ and $\log(R)^{\mathfrak{m}\delta^2_0/2}\leq \log (1+\norm{\om}_V^{-Z})$. This last relation implies
	\[
	\norm{\om}_V < \log((2r)^{-1})^{-\eta},
	\]
	where $r = 1/2R$ and $\eta = \mathfrak{m}\delta^2_0/2Z$. Since $\sigma_f$ is a positive measure, we deduce from Corollary \ref{cor:estimateorigin} that  
	\begin{align*}
		\sigma_f(C^{\om}_r) \leq \sigma_f(C^{{\bzero}}_{\norm{\om}_{V}+r})
		\leq C\norm{f}_{1}^2(\norm{\om}_{V}+r)^\gamma
		&\leq C\norm{f}_{1}^2 (\log((2r)^{-1})^{-\eta}+r)^\gamma\\
		&\leq \widetilde{C} \norm{f}_{1}^2 \log(1/r)^{-\tilde{\gamma}},
	\end{align*}
	for some $\widetilde{C}, \tilde{\gamma} > 0$ only depending on the substitution. Finally, we note that for $f$ of the form \eqref{cylindrical}, we have
	\small{
	\[
	\norm{f}_1 = \sum_{i=1}^m \mu(\mathfrak{X}_{T_i}) \norm{\psi_i}_1 = \mathcal{O}(\norm{f}_{\infty}),
	\]}since all $\psi_i$ have compact support.
\end{proof}

\begin{theorem}\label{thm:corr}
	Let $\zeta$ be a self-similar and strongly totally non-Pisot substitution. Let $f,g\in L^2(\mathfrak{X}_\zeta,\mu)$ and suppose $f$ is Lipschitz and zero mean cylindrical function. There exist constants $C,\alpha>0$ only depending on the substitution, such that for all $R\geq2$
	\[
	\dfrac{1}{R^d} \int_{C^{{\bzero}}_R} \left| \langle f \circ \phi_{{\bs}}, g \rangle_{L^2(\mathfrak{X}_\zeta,\mu)} \right|^2 d{\bs} \leq C\norm{f}^2_{\textup{Lip}}\norm{g}^2_2\log(R)^{-\alpha}.
	\]
\end{theorem}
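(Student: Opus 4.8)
The plan is to deduce this correlation bound from the uniform spectral estimate in Theorem \ref{thm:main} via a standard spectral-measure argument. First I would relate the integrand to the spectral measure $\sigma_{f,g}$: by the spectral theorem, $\langle f\circ\phi_{{\bs}}, g\rangle_{L^2} = \widehat{\sigma}_{f,g}({\bs})$, so the left-hand side is $R^{-d}\int_{C^{{\bzero}}_R}|\widehat{\sigma}_{f,g}({\bs})|^2\,d{\bs}$. The key analytic device is the Fej\'er-kernel computation already carried out in the proof of Lemma \ref{lemmma: spectralmeasure}: averaging $|\widehat{\sigma}_{f,g}|^2$ (equivalently a squared exponential integral) against the cube produces a convolution of $\sigma_{f,g}$ against itself weighted by a $d$-dimensional Fej\'er kernel $\mathfrak{F}^d_R$. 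Concretely, I expect an identity of the shape
\[
\frac{1}{R^d}\int_{C^{{\bzero}}_R}\left|\widehat{\sigma}_{f,g}({\bs})\right|^2 d{\bs} = \int_{\R^d}\int_{\R^d}\mathfrak{K}_R(\om-\om')\,d\sigma_{f,g}(\om)\,\overline{d\sigma_{f,g}(\om')},
\]
where $\mathfrak{K}_R$ is (up to constants) the Fej\'er kernel, which concentrates on the cube $C^{{\bzero}}_{1/R}$ and is uniformly $\mathcal{O}(R^d)$ there. The upshot is a pointwise bound of the averaged correlation by the ``self-correlation'' of the spectral measure at scale $1/R$, i.e.\ by a constant times $R^d\,\sigma_{|f|}\!\big(\text{cubes of side }\sim 1/R\big)$ type quantity.

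Next I would reduce the two-variable spectral measure $\sigma_{f,g}$ to the one-variable measures $\sigma_f$ and $\sigma_g$. The standard move is Cauchy--Schwarz for spectral measures: $|\sigma_{f,g}|(A)\le \sigma_f(A)^{1/2}\sigma_g(A)^{1/2}$ for Borel sets $A$, together with $\sigma_g(\R^d)=\|g\|_2^2$. Applying this to the Fej\'er-weighted double integral, and using that $\mathfrak{K}_R$ is concentrated at scale $1/R$, I expect to bound the whole expression by something like $\|g\|_2^2$ times $\sup_{\om}\sigma_f(C^{\om}_{c/R})$ for a fixed constant $c$. This is exactly where Theorem \ref{thm:main} enters: it gives the uniform estimate $\sigma_f(C^{\om}_r)\le c\|f\|_{\text{Lip}}^2\log(1/r)^{-\gamma}$ valid for \emph{all} $\om\in\R^d$ and all $r<r_0$. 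Setting $r\sim 1/R$ converts this into $\log(R)^{-\gamma}$ decay, and collecting the constants gives the claimed inequality with $\alpha = \gamma$ and $C$ absorbing the dimensional factors from $\mathfrak{K}_R$.

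I anticipate two technical points requiring care. The first, and main obstacle, is the precise passage from the averaged $|\widehat{\sigma}_{f,g}|^2$ to a genuine bound by $\sup_{\om}\sigma_f(\text{small cube})$: the Fej\'er kernel is not compactly supported, so I must control its tails, either by bounding the Fej\'er kernel by a sum of rescaled indicators of cubes $C^{\om}_{1/R}$ tiling $\R^d$ (with geometrically decaying weights) and then invoking the uniformity in $\om$ of Theorem \ref{thm:main} to bound each piece, or by a direct integration-by-parts/summation estimate showing the tail contribution is lower order. The uniform-in-$\om$ nature of Theorem \ref{thm:main} is essential precisely because this decomposition ranges $\om$ over all of $\R^d$, not just a neighborhood of the origin. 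The second point is merely bookkeeping: ensuring that Theorem \ref{thm:main} applies only requires $f$ to be Lipschitz, zero-mean and cylindrical (which is hypothesized), while $g\in L^2$ is arbitrary and enters only through $\|g\|_2^2$ via Cauchy--Schwarz, so no regularity on $g$ is needed. Finally I would remark that the restriction $R\ge 2$ and the constraint $r<r_0$ match up, since for bounded $R$ the inequality holds trivially after enlarging $C$.
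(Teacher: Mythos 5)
Your plan is correct, but it follows a genuinely different route from the paper. The paper does \emph{not} pass through the Wiener-type second-moment identity: it writes $\int_{C^{\bzero}_R}|\langle f\circ\phi_{\bs},g\rangle|^2\,d\bs=\int_{\R^d}\langle S^f_R(\cdot,\om),g\rangle\,d\sigma_{f,g}(\om)$ and then splits the frequency domain into the small set $\Omega=\{\norm{\om}_V<1,\ \log(R)^{\mathfrak{m}\delta_0^2/2}\le\log(1+\norm{\om}_V^{-Z})\}$ and its complement; off $\Omega$ it uses the pointwise sup-norm bound $\norm{S^f_R(\cdot,\om)}_\infty=\mathcal{O}(\norm{f}_{\textup{Lip}}R^d\log(R)^{-\eta})$ from the internals of Proposition \ref{prop:spectralestimate} together with $|\sigma_{f,g}|(\R^d)\le\norm{f}_2\norm{g}_2$, and on $\Omega$ it uses the trivial bound $\norm{S^f_R(\cdot,\om)}_\infty\lesssim R^d\norm{f}_\infty$ together with $|\sigma_{f,g}(\Omega)|\le\sqrt{\sigma_f(\Omega)}\sqrt{\sigma_g(\Omega)}$ and the estimate at the origin. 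Your approach instead uses only the \emph{conclusion} of Theorem \ref{thm:main} (the uniform local bound $\sigma_f(C^{\om}_r)\lesssim\norm{f}_{\textup{Lip}}^2\log(1/r)^{-\gamma}$) plus general measure theory, which is more modular and is essentially the argument of \cite{Knill} that the author alludes to in the introduction; the price is a worse exponent ($\alpha=\gamma/2$ after the Cauchy--Schwarz step, which is immaterial here). One imprecision to fix when writing it up: averaging $|\widehat{\sigma}_{f,g}|^2$ against the \emph{sharp} cutoff $C^{\bzero}_R$ produces a product of Dirichlet (sinc) kernels, which is not nonnegative, rather than a Fej\'er kernel; you must first dominate $\mathds{1}_{C^{\bzero}_R}$ by a triangular weight on $C^{\bzero}_{2R}$ (or equivalently insert the $\frac{1}{R^d}\int_{C_R}\int_{C_R}$ structure as in Lemma \ref{lemmma: spectralmeasure}) to land on a genuine nonnegative Fej\'er kernel before carrying out the tail decomposition into cubes of side $1/R$ with summable weights that you describe. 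With that adjustment your argument closes.
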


\begin{proof} 
	This follows from Theorem \ref{thm:main} by standard methods. Below, $\langle\cdot,\cdot\rangle$ denotes the standard inner product in $L^2(\mathfrak{X}_\zeta,\mu)$.
	\small{
	\begin{align*}
		\int_{C^{{\bzero}}_R} \left| \langle f \circ \phi_{{\bs}}, g \rangle \right|^2 d{\bs} &= \int_{C^{{\bzero}}_R} \langle f \circ \phi_{{\bs}}, g \rangle \int_{\R^d} e[\om\cdot{\bs}] d\sigma_{f,g}(\om) d{\bs}\\
		&= \int_{\R^d} \left\langle S^f_R(\cdot,\om) ,g \right\rangle d\sigma_{f,g}(\om)\\
		&= \underbrace{\int_{\Omega} \left\langle S^f_R(\cdot,\om) ,g \right\rangle d\sigma_{f,g}(\om)}_{=: I_1} + 
		\underbrace{\int_{\R^d\setminus \Omega} \left\langle S^f_R(\cdot,\om) ,g \right\rangle d\sigma_{f,g}(\om)}_{=:I_2},
	\end{align*}}where $\Omega = \left\{\om\in \R^d\,\middle|\, \norm{\om}_V < 1,  \log(R)^{\mathfrak{m}\delta^2_0/2}\leq\log (1+\norm{\om}^{-Z}_V)\right\}$, and all the relevant constants are defined in the proof of Proposition \ref{prop:spectralestimate}. Following Case 1 and Case 2 in this proposition, for $\om\notin\Omega$
	\[
	\norm{S^f_R(\cdot,\om)}_2 \leq \norm{S^f_R(\cdot,\om)}_\infty =  \mathcal{O} \left( \norm{f}_{\text{Lip}} R^d \left( \log(R) \right)^{-\mathfrak{m}\delta_0^2/2Z} \right),
	\]
	and in consequence, since $|\sigma_{f,g}(\R^d)|\leq \norm{f}_2\norm{g}_2$, this yields
	\[
	|I_2| = \mathcal{O}\left( \norm{f}_{\text{Lip}}\norm{f}_2 \norm{g}^2_2 R^d \left( \log(R) \right)^{-\mathfrak{m}\delta_0^2/2Z} \right).
	\]
	To deal with $I_1$, we follow the proof of Theorem \ref{thm:main}: for $\om\in\Omega$ we have $\norm{\om}_V < \log(R)^{-\eta}$, yielding
	\small{
	\begin{align*}
		|I_1| = \mathcal{O}\left( R^d\norm{f}_\infty\norm{g}_2 |\sigma_{f,g}(\Omega)| \right)
		&= \mathcal{O}\left( R^d\norm{f}_\infty\norm{g}_2 \sqrt{\sigma_f(\Omega)}\sqrt{\sigma_g(\Omega)}  \right)\\
		&= \mathcal{O}\left( R^d\norm{f}_\infty\norm{g}^2_2 \sqrt{\sigma_f \left(C^{\bzero}_{\log(R)^{-\eta}} \right)}  \right)\\
		&= \mathcal{O}\left( R^d\norm{f}^2_\infty\norm{g}^2_2 \log(R)^{-\tilde{\gamma}} \right).
	\end{align*}}
	The result follows immediately.
\end{proof}

\section{Self-affine case}
In this section we prove similar results to Theorems \ref{thm:main} and \ref{thm:corr}, but in the context of self-affine substitutions. Although this case follows the same steps of the self-similar one, the main difference is the local estimate at $\om = {\bzero}$. A result of S. Schmieding and R. Treviño in \cite{schmieding2018self} gives an upper bound for the deviations of ergodic averages, like Theorem \ref{thm:devselfsimilar}, but for the self-affine case. However, to apply this result we need to make some extra assumptions on the substitution and on the functions. In addition, the natural domain of the ergodic averages is not the cube $C^{\bzero}_R$ as in Theorem \ref{thm:devselfsimilar}, but a deformation of it given by the exponential map orbit that passes through the expansion $L$ of the substitution.

Let us summarize the new hypotheses in the next lines. First, we suppose the expansion associated with the substitution is diagonalizable over $\C$. Second, we assume the supports of the tiles are given by polyhedra, and that they intersect on full edges (this enforces finite local complexity for obvious reasons). The latter assumption implies the pattern equivariant cohomology of the tiling space is finite dimensional, and the associated cohomology action is simply a linear transformation between finite dimensional spaces (see Corollary 1 in \cite{schmieding2018self}), and this plays a fundamental role in the proof of the Theorem \ref{thm:devselfaffine} below. For an introduction on the cohomology of tiling spaces, the reader can consult \cite{sadun2008topology}.

Let us recall some of the notions needed to state the main result from \cite{schmieding2018self}. Let $C^{{\bzero}}_1= [-1,1]^d$. Let $L$ be the linear expansion of the substitution $\zeta$, which we may suppose satisfies $\det(L) > 0$ by passing to a power. Let $\mathfrak{a}$ be the matrix that satisfies $\exp(\mathfrak{a}) = L$, and set $g_R = \exp(R\mathfrak{a})$. Finally, let
\[
B_R = g_{\sigma\log(R)} C^{{\bzero}}_1,
\]
where the constant $\sigma = d/\log(\det(L))$ is chosen such that $\text{Vol}(B_R) = R^d \text{Vol}(C^{{\bzero}}_1)$. In particular, $\det (g_{\sigma\log(R)}) = R^d$.
\begin{theorem}[S. Schimieding and R. Treviño, \cite{schmieding2018self}, Theorem 1]\label{thm:devselfaffine}
	Let $f$ be a smooth cylindrical function with zero mean. There exist $C>0$ and $\alpha\in (d-1,d)$ depending only on the substitution such that for $R\geq 2$ and any $\bX\in\mathfrak{X}_\zeta$,
	\begin{equation*}
	\left| \int_{B_R} f(\phi_{{\bs}}(\bX)) d{\bs}  \right| \leq C \norm{f}_\infty R^\alpha.
	\end{equation*}
	Moreover, $\alpha$ only depends on the induced action (of the substitution) on the cohomology space $H^d(\mathfrak{X}_\zeta,\C)$.
\end{theorem}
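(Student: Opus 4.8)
The plan is to reconstruct the argument of Schmieding and Treviño by reducing the Birkhoff integral to a pairing in top-degree pattern-equivariant cohomology and then letting the renormalisation action of $\zeta$ on $H^d(\mathfrak{X}_\zeta,\C)$ govern the deviation rate. First I would encode the smooth cylindrical function $f$ as a pattern-equivariant $d$-form $\eta_f$ on $\R^d$, chosen so that $\int_{B_R} f\circ\phi_{\bs}(\bX)\,d{\bs}$ equals the integral of $\eta_f$ over the region $B_R$ sitting in the leaf through $\bX$. The standing assumption that the prototiles are polyhedra meeting along full faces is exactly what makes the pattern-equivariant cohomology finite-dimensional (Corollary 1 in \cite{schmieding2018self}), so $\eta_f$ determines a class in the finite-dimensional space $H^d(\mathfrak{X}_\zeta,\C)$. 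Here the smoothness of $f$, rather than mere Lipschitz regularity, is what permits a clean representation by a closed form; and the zero-mean hypothesis translates into this class being annihilated by the Ruelle--Sullivan current of $\mu$, i.e.\ having no component along the distinguished volume class that carries the full $R^d$ growth.

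Second, I would exploit self-affinity. Since $g_1 = L = \exp(\mathfrak{a})$ implements the substitution on the inflated tiling, the dilation $g_{\sigma\log R}$ presents $B_R$ as a cube renormalised $n \approx d\log(R)/\log\theta$ times and then interpolated by the flow $g_t$. Decomposing $B_R$ along the hierarchical tower into supertiles of successive orders, the integral telescopes: each renormalisation step rewrites the integral of $\eta_f$ over a supertile as a linear combination of integrals over its constituent tiles, and the transfer operator implementing this step is precisely the pullback $\zeta^\ast$ acting on $H^d(\mathfrak{X}_\zeta,\C)$. This is the top-cohomology analogue of the matrix cocycle $\Pi_n(\om)$ of Section 2.5, now evaluated at $\om={\bzero}$ but carried on differential forms rather than on the structure factor.

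Third, I would diagonalise $\zeta^\ast$ on $H^d(\mathfrak{X}_\zeta,\C)$. Its leading eigenvalue is the Perron--Frobenius value $\theta=\det L$, which reproduces the $R^d$ term that the zero-mean condition has already removed. The surviving part grows like $|\lambda_2|^{n}$, where $\lambda_2$ is the second-largest eigenvalue in modulus of $\zeta^\ast$; substituting $n \approx d\log(R)/\log\theta$ converts this to $R^{\alpha}$ with $\alpha = d\,\log|\lambda_2|/\log\theta$, which is why $\alpha$ depends only on the induced action on $H^d(\mathfrak{X}_\zeta,\C)$. The supertiles meeting $\partial B_R$ but not contained in it contribute at most $\mathcal{O}(R^{d-1})$, so the final bound is the maximum of $R^{d-1}$ and the cohomological term; the asserted range $\alpha\in(d-1,d)$ then records that the deviation sits strictly between the boundary order $d-1$ and the volume order $d$ killed by cancellation.

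The main obstacle will be making the cohomological reduction and the boundary bookkeeping rigorous. One must show that the supertiles of each order fitting inside $B_R$ approximate the region well enough that the telescoped main term captures $\int_{B_R}\eta_f$ up to a genuine $\mathcal{O}(R^{d-1})$ error, and that the continuous interpolation $g_t$ between integer inflation scales does not disturb the estimate. A second delicate point is controlling the subdominant spectrum of $\zeta^\ast$: one needs the second eigenvalue modulus to satisfy $\theta^{(d-1)/d} < |\lambda_2| < \theta$ in order to land in the stated interval, which ties the deviation exponent to the finer Perron-type structure of the action on $H^d(\mathfrak{X}_\zeta,\C)$ and ultimately to the Galois-conjugate eigenvalues of $L$.
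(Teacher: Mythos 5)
The paper does not prove this statement: it is quoted verbatim from Schmieding and Trevi\~no (\cite{schmieding2018self}, Theorem 1) and used as a black box, so there is no internal proof to compare against. Your sketch is a fair reconstruction of the original argument: representing the smooth pattern-equivariant function by a top-degree form, using the polyhedral/full-face hypothesis to get finite-dimensional pattern-equivariant cohomology, pairing the zero-mean condition with the Ruelle--Sullivan current to kill the volume exponent $d$, and reading off the deviation exponent from the subdominant spectrum of the induced action on $H^d(\mathfrak{X}_\zeta,\C)$, with an $\mathcal{O}(R^{d-1})$ boundary contribution. This is indeed how the cited result is obtained.

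One point you flag as a delicate obstacle is not actually needed: you do not have to verify $\theta^{(d-1)/d} < |\lambda_2|$. The theorem only asserts the \emph{existence} of some $\alpha\in(d-1,d)$ for which the bound holds; since the boundary bookkeeping already contributes $\mathcal{O}(R^{d-1})$, one simply takes $\alpha$ to be (slightly above) the maximum of $d-1$ and $d\log|\lambda_2|/\log\theta$, and this always lands in $(d-1,d)$ because $|\lambda_2|<\theta$. So the lower constraint on $|\lambda_2|$ is vacuous for the statement as given. The genuinely technical parts, as you correctly identify, are the boundary approximation of $B_R$ by supertiles at each scale and the control of the continuous interpolation $g_t$ between integer inflation times; both are handled in \cite{schmieding2018self} and would need to be reproduced in full if one wanted a self-contained proof here.
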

As in the self-similar case, this result implies a local estimate of the spectral measure in a neighborhood of zero. For completeness we prove this in the next lemma, which is just a slight variation of Lemma \ref{lemmma: spectralmeasure} in the particular case we are interested. Define
\[
\widetilde{S}_R^f(\bX,\om) := \int_{B_R} e[ \om \cdot {\bs} ]f(\phi_{{\bs}}(\bX)) d{\bs}, \quad \widetilde{G}_R^f(\om) = \dfrac{1}{ \text{Vol}(B_R) }\norm{\widetilde{S}_R^f(\cdot,\om)}^2_{L^2(\mathfrak{X}_\zeta,\mu)}.
\]
\begin{lemma}
	If $\widetilde{G}_R^f({\bzero}) \leq CR^d R^\gamma$ for all $R\geq R_0$, for some $C,R_0,\gamma>0$, then
	\[
	\sigma_f \left( M_{r}^{-1} C^{{\bzero}}_r\right) \leq cr^{\gamma},
	\]
	for all $r\leq r_0=1/2R_0$ and some $c>0$, where $M_r = 2r\,g^{\sf T}_{-\sigma\log(2r)}$.
\end{lemma}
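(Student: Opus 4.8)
The plan is to reproduce the argument of Lemma~\ref{lemmma: spectralmeasure}, replacing the cube $C_R$ by the distorted box $B_R = g_{\sigma\log(R)}C_1^{\bzero}$ and carrying the linear map through an explicit change of variables. First I would expand the quadratic mean at $\om=\bzero$ exactly as in that lemma: write $\widetilde{G}_R^f(\bzero) = \text{Vol}(B_R)^{-1}\langle \widetilde{S}_R^f(\cdot,\bzero),\widetilde{S}_R^f(\cdot,\bzero)\rangle$, use $\phi$-invariance to reduce $\langle f\circ\phi_{\bs},f\circ\phi_{\boldsymbol{t}}\rangle = \langle f\circ\phi_{\bs-\boldsymbol{t}},f\rangle$, and insert the spectral representation $\langle f\circ\phi_{\bs-\boldsymbol{t}},f\rangle = \int_{\R^d} e[-\boldsymbol{\lambda}\cdot(\bs-\boldsymbol{t})]\,d\sigma_f(\boldsymbol{\lambda})$. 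Tonelli (legitimate by positivity) then yields
\[
\widetilde{G}_R^f(\bzero) = \int_{\R^d}\frac{1}{\text{Vol}(B_R)}\left|\int_{B_R}e[-\boldsymbol{\lambda}\cdot\bs]\,d\bs\right|^2 d\sigma_f(\boldsymbol{\lambda}),
\]
whose integrand is a Fej\'er-type kernel attached to the domain $B_R$.

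The key new step is the change of variables $\bs = g_{\sigma\log(R)}\bu$ with $\bu\in C_1^{\bzero}$. Since $\det(g_{\sigma\log(R)}) = R^d$ and $\boldsymbol{\lambda}\cdot g_{\sigma\log(R)}\bu = (g_{\sigma\log(R)}^{\sf T}\boldsymbol{\lambda})\cdot\bu$, the inner integral factorises in the transformed spectral variable $\boldsymbol{\mu} := g_{\sigma\log(R)}^{\sf T}\boldsymbol{\lambda}$, giving
\[
\frac{1}{\text{Vol}(B_R)}\left|\int_{B_R}e[-\boldsymbol{\lambda}\cdot\bs]\,d\bs\right|^2 = \frac{R^d}{2^d}\prod_{i=1}^d\left(\frac{\sin(2\pi\mu_i)}{\pi\mu_i}\right)^2.
\]
This is the same product Fej\'er kernel as before, now read at the linearly transformed point $\boldsymbol{\mu}$ rather than at $\boldsymbol{\lambda}$; the matrix $g_{\sigma\log(R)}^{\sf T}$ is exactly what will generate the distorting factor $M_r$ in the conclusion.

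Next I would invoke the elementary inequality $\sin(x)\geq \tfrac{2}{\pi}x$ on $[0,\pi/2]$: whenever $\abs{\mu_i}\leq 1/4$ for every $i$, each factor is at least $16/\pi^2$, so the kernel is bounded below by $c'R^d$ with $c' = (8/\pi^2)^d$. Restricting the integral to $\{\boldsymbol{\lambda} : g_{\sigma\log(R)}^{\sf T}\boldsymbol{\lambda}\in C_{1/4}^{\bzero}\}$ and discarding the remainder by positivity gives $\sigma_f$ of this set at most $(c'R^d)^{-1}\widetilde{G}_R^f(\bzero)$. Setting $r = 1/(2R)$ one has $-\sigma\log(2r) = \sigma\log(R)$, so this set equals $(g_{-\sigma\log(2r)}^{\sf T})^{-1}C_{1/4}^{\bzero}$, which, after the scalar normalisation absorbed into $M_r = 2r\,g_{-\sigma\log(2r)}^{\sf T}$, is the box $M_r^{-1}C_r^{\bzero}$. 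Inserting the hypothesised growth of $\widetilde{G}_R^f(\bzero)$ together with $R^d = (2r)^{-d}$ then produces the asserted bound $\sigma_f(M_r^{-1}C_r^{\bzero})\leq cr^{\gamma}$.

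The only genuinely delicate point, relative to Lemma~\ref{lemmma: spectralmeasure}, is the bookkeeping forced by the non-scalar map: one must verify that the substitution sends $C_1^{\bzero}$ onto $B_R$ with Jacobian $R^d$, that it is the \emph{transpose} $g_{\sigma\log(R)}^{\sf T}$ that acts on the spectral side, and that the threshold box $\{\abs{\mu_i}\leq 1/4\}$ corresponds, under $r=1/(2R)$, to $M_r^{-1}C_r^{\bzero}$ with the stated $M_r$ (reconciling the scalar constant in front of $g_{-\sigma\log(2r)}^{\sf T}$ with the admissible half-side in the Fej\'er lower bound is a routine adjustment of the normalising factor). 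Once the linear algebra of $g$ and its transpose is pinned down, everything reduces to the same Fej\'er-kernel positivity argument already used in the self-similar case.
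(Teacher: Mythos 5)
Your proposal is correct and follows essentially the same route as the paper: expand $\widetilde{G}_R^f(\bzero)$ via the spectral theorem, pull $B_R$ back to $C^{\bzero}_1$ with Jacobian $R^d$ so that the transpose $g^{\sf T}_{\sigma\log(R)}$ acts on the spectral variable, lower-bound the resulting Fej\'er-type kernel on a fixed box, and identify that box with $M_r^{-1}C^{\bzero}_r$ (the paper packages the last step as the scaling identity $\mathfrak{F}^d_R(\tau\bx)=\tau^{-d}\mathfrak{F}^d_{\tau R}(\bx)$ and a pushforward $\sigma_f\circ M_r^{-1}$, but the content is identical). The residual factor-of-two between your admissible box $C^{\bzero}_{1/4}$ and the target $C^{\bzero}_{1/2}$ is the same normalization slack the paper itself absorbs silently (as is the sign of the exponent $\gamma$ in the hypothesis, which should read $R^{-\gamma}$ for the stated conclusion), and it is harmless for the downstream use of the lemma.
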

\begin{proof}
	We have
	\begin{align*}
	\widetilde{G}_R^f({\bzero}) &= \int_{\R^d} \dfrac{1}{ \text{Vol}(B_R) } \int_{B_R}\int_{B_R} e\left[ \boldsymbol{\lambda}\cdot({\bs}-\boldsymbol{t}) \right] d{\bs}\,d\boldsymbol{t}\,d\sigma_f(\boldsymbol{\lambda}).
	\end{align*}
	Note that by a change of variables we have the next equality
	\[
	\int_{B_R} e \left[ \bx\cdot\boldsymbol{y} \right] d\boldsymbol{y} = \int_{C^{\bzero}_1} e\left[  g^{\sf T}_{\sigma\log(R)}\bx\cdot\boldsymbol{y} \right] \underbrace{ \det(g^{\sf T}_{\sigma\log(R)}) }_{= R^d} d\boldsymbol{y}.
	\]
	Continuing with the former calculation,
	\begin{align*}
	\widetilde{G}_R^f({\bzero}) &= \int_{\R^d} \dfrac{R^d}{ \text{Vol}(C^{\bzero}_1) }\int_{C^{\bzero}_1}\int_{C^{\bzero}_1} e \left[ g^{\sf T}_{\sigma\log(R)} \boldsymbol{\lambda}\cdot({\bs}-\boldsymbol{t})  \right] d{\bs}\,d\boldsymbol{t}\,d\sigma_f(\boldsymbol{\lambda}) \\
	&= \int_{\R^d} R^d \mathfrak{F}^d_1(g^{\sf T}_{\sigma\log(R)}\boldsymbol{\lambda}) \,d\sigma_f(\boldsymbol{\lambda}).
	\end{align*}
 	The Fej\'er kernel satisfies the next property: for any constant $\tau>0$ and all $R>0$
	\[
	\mathfrak{F}^d_R(\tau\bx) = \dfrac{1}{\tau^d}\mathfrak{F}^d_{\tau R}(\bx).
	\]
	One last change of variable yields
	\small{
	\begin{align*}
	\widetilde{G}_R^f({\bzero}) = \int_{\R^d} \mathfrak{F}^d_R\left( \underbrace{ \dfrac{1}{R}\,g^{\sf T}_{\sigma\log(R)} }_{= M_r} (\boldsymbol{\lambda}) \right) \,d\sigma_f(\boldsymbol{\lambda})
	&= \int_{\R^d} \mathfrak{F}^d_R (M_r\boldsymbol{\lambda})\,d\sigma_f(\boldsymbol{\lambda})\\
	&= \int_{\R^d} \mathfrak{F}^d_R (\boldsymbol{\lambda})\,d(\sigma_f \circ M_r^{-1})(\boldsymbol{\lambda}).
	\end{align*}}
	By the same arguments given in Lemma \ref{lemmma: spectralmeasure}, we have for $r=1/2R$
	\small{
	\begin{align*}
	\sigma_f\left( M_r^{-1} C^{{\bzero}}_r\right) = \int_{M_r^{-1} C^{{\bzero}}_r} 1 \, d\sigma_f(\boldsymbol{\lambda})
	&= \int_{C^{{\bzero}}_r} 1 \,d(\sigma_f \circ M_r^{-1})(\boldsymbol{\lambda}) \\
	&\leq \dfrac{\pi^{2d}}{(4R)^d} \int_{C^{{\bzero}}_r} \mathfrak{F}^d_R(\boldsymbol{\lambda}) \,d(\sigma_f \circ M_r^{-1})(\boldsymbol{\lambda}) \\
	&\leq \dfrac{\pi^{2d}}{(4R)^d} \widetilde{G}_R^f({\bzero}) \\
	&\leq c r^{\gamma}.
	\end{align*}}
\end{proof}
\begin{corollary}\label{cor:estimateorigin2}
	Let $f$ be a cylindrical function with zero mean. There exist constants $c,\gamma > 0$, only depending on the substitution, such that for all $r < 1/2$
	\begin{equation}
	\sigma_f(M_r^{-1}C^{{\bzero}}_r) \leq c\norm{f}_{\infty}^2r^\gamma.
	\end{equation}
\end{corollary}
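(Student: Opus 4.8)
The plan is to mirror exactly the derivation of Corollary \ref{cor:estimateorigin} from the self-similar case, replacing the deviation bound of Theorem \ref{thm:devselfsimilar} by its self-affine counterpart Theorem \ref{thm:devselfaffine}, and the Fej\'er-kernel estimate of Lemma \ref{lemmma: spectralmeasure} by the preceding lemma adapted to the boxes $B_R$. The observation linking the two ingredients is that at the spectral parameter $\om={\bzero}$ the twisted Birkhoff integral degenerates into an ordinary ergodic integral: since $e[{\bzero}\cdot{\bs}]=1$, we have
\[
\widetilde{S}_R^f(\bX,{\bzero}) = \int_{B_R} f(\phi_{{\bs}}(\bX))\,d{\bs},
\]
which is precisely the quantity controlled by Theorem \ref{thm:devselfaffine}.

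First I would invoke Theorem \ref{thm:devselfaffine} to obtain the pointwise bound $\abs{\widetilde{S}_R^f(\bX,{\bzero})} \leq C\norm{f}_\infty R^\alpha$, uniformly over $\bX\in\mathfrak{X}_\zeta$, with $\alpha\in(d-1,d)$. Because $\mu$ is a probability measure, this uniform bound integrates to $\norm{\widetilde{S}_R^f(\cdot,{\bzero})}^2_{L^2(\mathfrak{X}_\zeta,\mu)} \leq C^2\norm{f}_\infty^2 R^{2\alpha}$. Dividing by $\text{Vol}(B_R) = R^d\,\text{Vol}(C^{\bzero}_1)$ then yields
\[
\widetilde{G}_R^f({\bzero}) \leq C' \norm{f}^2_\infty R^{2\alpha - d} = C'\norm{f}^2_\infty R^{d-\gamma}, \qquad \gamma := 2(d-\alpha).
\]
Since $\alpha<d$ we have $\gamma>0$ (indeed $\gamma\in(0,2)$ as $\alpha>d-1$), so this is a genuine decay hypothesis rather than a trivial growth estimate.

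The final step is to feed this bound into the preceding lemma, whose conclusion converts the decay of $\widetilde{G}_R^f({\bzero})$ into $\sigma_f(M_r^{-1}C^{\bzero}_r) \leq c\,r^\gamma$; tracking the factor $\norm{f}_\infty^2$ through the constant of that lemma produces the claimed bound $c\norm{f}^2_\infty r^\gamma$. The only genuine subtlety I anticipate is a hypothesis gap: Theorem \ref{thm:devselfaffine} is stated for \emph{smooth} cylindrical functions, whereas the corollary is phrased for cylindrical functions. The estimate as derived is valid for smooth $f$, and for the broader class it would have to be recovered by an approximation argument, or the statement read as implicitly carrying the smoothness assumption consistent with the standing hypotheses of the self-affine section. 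Apart from this point, the argument is routine bookkeeping and presents no essential difficulty, exactly as in the self-similar case.
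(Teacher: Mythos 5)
Your argument is exactly the one the paper intends: Corollary \ref{cor:estimateorigin2} is deduced by squaring and averaging the uniform pointwise bound of Theorem \ref{thm:devselfaffine} to get $\widetilde{G}_R^f({\bzero}) \leq C'\norm{f}_\infty^2 R^{d-\gamma}$ with $\gamma = 2(d-\alpha)$, and feeding this into the preceding lemma (whose hypothesis ``$\widetilde{G}_R^f({\bzero}) \leq CR^dR^{\gamma}$'' is evidently a typo for $CR^dR^{-\gamma}$, as your reading correctly assumes). Your observation about the smoothness hypothesis is also well taken: Theorem \ref{thm:devselfaffine} requires smooth cylindrical functions, so the corollary should be read as implicitly carrying that assumption, consistent with Theorem \ref{thm:mainselfaffine}.
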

Now we can prove the uniform estimate for the spectral measures in the self-affine case.
\begin{theorem}\label{thm:mainselfaffine}
	Let $\zeta$ be a self-affine and strongly totally non-Pisot substitution, whose expansion is diagonalizable over $\C$. Let $f\in L^2(\mathfrak{X}_\zeta,\mu)$ be a smooth and zero mean function. There exist constants $r_0,c,\gamma>0$ only depending on the substitution, such that for any $\om\in\R^d$
	\[
	\sigma_f (C^{\om}_r) \leq c \norm{f}^2_{\infty}\log(1/r)^{-\gamma},\quad  \forall r< r_0.
	\]
\end{theorem}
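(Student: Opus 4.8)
The plan is to reproduce the architecture of the proof of Theorem~\ref{thm:main} and to isolate the single place where self-affinity (as opposed to self-similarity) genuinely intervenes. The away-from-the-origin estimate is already available without change: Proposition~\ref{prop:spectralestimate} is proved only under the strongly totally non-Pisot hypothesis and never uses self-similarity, and for a smooth $f$ the norm $\norm{f}_{\text{Lip}}$ is finite, so Cases~1 and~2 there already furnish
\[
\sigma_f(C^{\om}_r)\leq C\norm{f}_{\text{Lip}}^2\log(1/r)^{-\gamma}
\]
for every $\om$ and all $r<r_0(\norm{\om}_V)$. By the remark following that proposition, the dependence of $r_0$ on $\om$ is only through a lower bound on $\norm{\om}_V$, so the only regime left to treat is $\om\neq\bzero$, $\norm{\om}_V^{Z}<1$ and $\log(R)^{\mathfrak{m}\delta_0^2/2}\leq\log(1+\norm{\om}_V^{-Z})$, i.e.\ $\om$ in a shrinking neighbourhood of the origin. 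Thus the whole task reduces to a near-origin estimate on \emph{Euclidean} cubes $C^{\bzero}_\rho$, precisely the role played by Corollary~\ref{cor:estimateorigin} in the self-similar case.

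The difficulty is that here the local information at the origin, Corollary~\ref{cor:estimateorigin2}, is phrased for the \emph{deformed} cubes $M_r^{-1}C^{\bzero}_r$ rather than for Euclidean cubes, because Theorem~\ref{thm:devselfaffine} controls the untwisted ergodic integral over $B_R=g_{\sigma\log R}C^{\bzero}_1$ and not over $C^{\bzero}_R$. My plan is therefore to insert a \textbf{bridge lemma}: deduce from Corollary~\ref{cor:estimateorigin2} a genuine Euclidean-cube bound
\[
\sigma_f(C^{\bzero}_\rho)\leq c\,\norm{f}_\infty^2\,\rho^{\gamma'}\qquad(\rho<\rho_0)
\]
for some $\gamma'>0$ depending only on $\zeta$. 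Since $\sigma_f$ is a positive measure, it suffices to contain $C^{\bzero}_\rho$ inside a single deformed cube $M_{r'}^{-1}C^{\bzero}_{r'}$ with $r'$ taken as large as the containment permits, and then invoke Corollary~\ref{cor:estimateorigin2}.

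Concretely, with $R'=1/2r'$ one simplifies $M_{r'}^{-1}C^{\bzero}_{r'}=\tfrac{1}{2}\,g^{\sf T}_{-\sigma\log R'}C^{\bzero}_1$, whence
\[
C^{\bzero}_\rho\subseteq M_{r'}^{-1}C^{\bzero}_{r'}\iff 2\rho\,g^{\sf T}_{\sigma\log R'}C^{\bzero}_1\subseteq C^{\bzero}_1,
\]
which holds once $2\rho\,\norm{g_{\sigma\log R'}}\leq 1$ in operator norm. Writing $L=PDP^{-1}$ with $D$ diagonal (possible since $L$ is diagonalizable over $\C$), we get $g_{\sigma\log R'}=P\,\mathrm{diag}(R'^{\sigma\log\lambda_i})\,P^{-1}$, so the singular values of $g_{\sigma\log R'}$ are comparable, up to the fixed factor $\norm{P}\,\norm{P^{-1}}$, to the moduli $R'^{\,\sigma\log\abs{\lambda_i}}$ (the branch of $\log\lambda_i$ is irrelevant, as it only affects phases). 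Hence $\norm{g_{\sigma\log R'}}\leq \mathrm{cond}(P)\,R'^{\,\kappa}$ with $\kappa=\sigma\log\abs{\lambda_{\max}}>0$, and the containment holds for $R'\asymp\rho^{-1/\kappa}$. Feeding this choice into Corollary~\ref{cor:estimateorigin2} gives $\sigma_f(C^{\bzero}_\rho)\leq c\,(r')^{\gamma}\asymp c\,\rho^{\gamma/\kappa}$, i.e.\ the bridge with $\gamma'=\gamma/\kappa$.

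With the Euclidean-cube origin bound in hand, the endgame is verbatim that of Theorem~\ref{thm:main}: in the remaining regime one has $\norm{\om}_V<\log((2r)^{-1})^{-\eta}$, whence $C^{\om}_r\subseteq C^{\bzero}_{c\norm{\om}_V+r}$, and the bridge yields $\sigma_f(C^{\om}_r)\leq c\,\norm{f}_\infty^2\log(1/r)^{-\tilde\gamma}$; combining this with the away-from-origin estimate and keeping the smaller exponent gives the theorem (the near-origin contribution carrying $\norm{f}_\infty^2$ and the away-from-origin one $\norm{f}_{\text{Lip}}^2$, both finite for smooth $f$). I expect the \textbf{main obstacle} to be exactly the anisotropy of the renormalization cocycle $g_{\sigma\log R}$: in the self-similar case $g_R$ is a scaled rotation, so deformed cubes are Euclidean cubes up to rotation and the bridge is cost-free, whereas here the deformed cubes are genuinely eccentric, their thinnest direction governed by $\abs{\lambda_{\max}}$. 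Pinning the singular values of $g_{\sigma\log R}$ to the powers $R'^{\sigma\log\abs{\lambda_i}}$ up to a fixed conditioning constant is what forces the exponent loss $\gamma\mapsto\gamma/\kappa$; checking that $\mathrm{cond}(P)$, the passage to a power ensuring $\det L>0$, and the existence of $\mathfrak{a}$ leave the final constants intact is the routine but delicate bookkeeping.
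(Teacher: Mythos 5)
Your proposal is correct and takes essentially the same route as the paper: the paper likewise reduces to a near-origin estimate via Proposition~\ref{prop:spectralestimate} and bridges Corollary~\ref{cor:estimateorigin2} to Euclidean cubes by showing $M_r^{-1}C^{\bzero}_r \supseteq C^{\bzero}_{r^{\sigma\norm{\mathfrak{a}}}}$, then concludes as in Theorem~\ref{thm:main}. The only cosmetic difference is that the paper quantifies the eccentricity of the deformed cube via $\norm{\exp(t\mathfrak{a})}\leq e^{t\norm{\mathfrak{a}}}$ rather than by diagonalizing $L$, yielding the exponent loss $\gamma\mapsto\gamma/(\sigma\norm{\mathfrak{a}})$ in place of your (slightly sharper) $\gamma/\kappa$.
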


\begin{proof}
	We start by remarking that the previous corollary actually give us an estimate for the non distorted cube $C^{{\bzero}}_r$. Note that for any invertible linear transformation $T$ we have
	\[
	\min_{\norm{\bx}_\infty=r} \norm{T^{-1}\bx}_\infty \geq \norm{T}^{-1} r.
	\]
	In particular, we have $M_r^{-1}C^{{\bzero}}_r \supseteq C^{{\bzero}}_{\norm{M_r}^{-1}r} $. Note also that
	\[
	\norm{M_r} = 2r\norm{g^{\sf T}_{\sigma\log(1/2r)}} = 2r \norm{\exp (\sigma\log(1/2r) \mathfrak{a})}\leq 2r e^{\sigma\log(1/2r)\norm{\mathfrak{a}}} = (2r)^{1-\sigma\norm{\mathfrak{a}}}.
	\]
	Notice that by the definition of $\sigma$ we have $\sigma\norm{\mathfrak{a}} \geq 1$, since $\det(L) \leq \norm{L}^d = e^{d\norm{\mathfrak{a}}}$. This yields
	\[
	r\norm{M_r}^{-1} \geq r^{\sigma\norm{\mathfrak{a}}}.
	\]
	Finally, this implies
	\[
	M_r^{-1}C^{{\bzero}}_r \supseteq C^{{\bzero}}_{r^{\sigma\norm{\mathfrak{a}}}}.
	\]
	Following the same steps of the proof of Theorem \ref{thm:main} allow us to conclude.
\end{proof}

Once again, we may derive weak mixing rates from the previous result:

\begin{theorem}\label{thm:corr2}
	Let $\zeta$ be a self-affine and strongly totally non-Pisot substitution, whose expansion is diagonalizable over $\C$. Let $f,g\in L^2(\mathfrak{X}_\zeta,\mu)$ and suppose $f$ is smooth and zero mean cylindrial function. There exist constants $C,\alpha>0$ only depending on the substitution, such that
	\[
	\dfrac{1}{R^d} \int_{C^{{\bzero}}_R} \left| \langle f \circ \phi_{{\bs}}, g \rangle_{L^2(\mathfrak{X}_\zeta,\mu)} \right|^2 d{\bs} \leq C\norm{f}^2_{\infty}\norm{g}^2_2\log(R)^{-\alpha}.
	\]
\end{theorem}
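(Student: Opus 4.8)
The plan is to mirror the argument of Theorem~\ref{thm:corr} almost verbatim, substituting the self-affine ingredients at each place where the self-similar ones were used. Concretely, I would begin by expanding the correlation via the spectral theorem exactly as before:
\[
\int_{C^{{\bzero}}_R} \left| \langle f \circ \phi_{{\bs}}, g \rangle \right|^2 d{\bs} = \int_{\R^d} \left\langle S^f_R(\cdot,\om),g \right\rangle d\sigma_{f,g}(\om),
\]
and then split the integral over a set $\Omega$ (where $\om$ is close to the origin in the $\norm{\cdot}_V$ sense) and its complement, precisely as in the proof of Theorem~\ref{thm:corr}.

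On $\R^d\setminus\Omega$ the estimate is unchanged: the bound $\norm{S^f_R(\cdot,\om)}_\infty = \mathcal{O}(\norm{f}_{\text{Lip}} R^d (\log R)^{-\mathfrak{m}\delta_0^2/2Z})$ coming from Cases 1 and 2 of Proposition~\ref{prop:spectralestimate} only used Corollary~\ref{cor:sumbound}, which holds in the self-affine setting since the structure-factor bound of Proposition~\ref{thm:ProdBound} and the tower estimate of Lemma~\ref{thm:Factor} made no use of self-similarity. Since $f$ is smooth it is in particular Lipschitz, so Corollary~\ref{cor:sumbound} applies, and combined with $|\sigma_{f,g}(\R^d)|\leq \norm{f}_2\norm{g}_2$ this controls $I_2$ as before. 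For the near-origin term $I_1$, I would again use $\norm{S^f_R(\cdot,\om)}_\infty = \mathcal{O}(R^d\norm{f}_\infty)$ together with Cauchy--Schwarz for $\sigma_{f,g}$, reducing matters to bounding $\sigma_f(\Omega)$.

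The one genuinely new step is the final estimate of $\sigma_f(\Omega)$, where the self-similar input (Corollary~\ref{cor:estimateorigin}) must be replaced by its self-affine analogue. Here I would invoke Theorem~\ref{thm:mainselfaffine}, which already packages Corollary~\ref{cor:estimateorigin2} together with the geometric comparison $M_r^{-1}C^{{\bzero}}_r \supseteq C^{{\bzero}}_{r^{\sigma\norm{\mathfrak{a}}}}$ to yield the log-H\"older bound $\sigma_f(C^{\om}_r)\leq c\norm{f}_\infty^2\log(1/r)^{-\gamma}$ on a neighborhood of the origin. Since on $\Omega$ one has $\norm{\om}_V < \log(R)^{-\eta}$, the set $\Omega$ is contained in a cube $C^{{\bzero}}_{\rho}$ with $\rho = \mathcal{O}(\log(R)^{-\eta})$, so Theorem~\ref{thm:mainselfaffine} gives $\sigma_f(\Omega) = \mathcal{O}(\norm{f}_\infty^2 \log(R)^{-\tilde\gamma})$ for a new exponent $\tilde\gamma$. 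Feeding this into the $I_1$ estimate produces $|I_1| = \mathcal{O}(R^d\norm{f}_\infty^2\norm{g}_2^2\log(R)^{-\tilde\gamma})$, and combining with the $I_2$ bound and dividing by $R^d$ finishes the proof.

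I do not expect a serious obstacle, since the structure of the argument transfers directly; the only point requiring care is the exponent bookkeeping (the self-affine local exponent $\gamma$ is degraded by the factor $\sigma\norm{\mathfrak{a}}$ relative to the self-similar case), so the final $\alpha$ will be the minimum of the decay rate $\mathfrak{m}\delta_0^2/2Z$ governing $I_2$ and the near-origin exponent $\tilde\gamma$ governing $I_1$. One should also confirm that $\Omega$ here is the same set as in Theorem~\ref{thm:corr}, defined by $\norm{\om}_V<1$ and $\log(R)^{\mathfrak{m}\delta_0^2/2}\leq\log(1+\norm{\om}_V^{-Z})$, so that the case analysis of Proposition~\ref{prop:spectralestimate} applies off $\Omega$ without modification.
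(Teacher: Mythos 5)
Your proposal is correct and is essentially the paper's own proof: the paper simply states that the argument is analogous to that of Theorem \ref{thm:corr}, and your write-up carries out exactly that substitution, replacing Corollary \ref{cor:estimateorigin} by the self-affine near-origin estimate (Corollary \ref{cor:estimateorigin2} via Theorem \ref{thm:mainselfaffine}) while reusing Corollary \ref{cor:sumbound} off the set $\Omega$. The only added value of your version is that it makes the exponent bookkeeping explicit, which the paper leaves implicit.
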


\begin{proof}
	The proof is analogous to the one of Theorem \ref{thm:corr}.
\end{proof}


\begin{thebibliography}{99}

\bibitem{avila2023quantitative} Avila, A., Forni, G., Safaee, P.: Quantitative weak mixing for interval exchange transformations. Geom. Funct. Anal. {\bf 33}(1), 1--56 (2023)

\bibitem{baake2016aperiodic} Baake, M., Damanik, D., Grimm, U.: WHAT IS... Aperiodic Order? Notices Amer. Math. Soc. {\bf 63}(6), 647--650 (2016)

\bibitem{baake2019geometric} Baake, M., Frank, N. P., Grimm, U, Robinson, E. A.: Geometric properties of a binary non-Pisot inflation and absence of absolutely continuous diffraction. Studia Math. {\bf 247}, 109--154 (2019)

\bibitem{baake2013aperiodic} Baake, M., Grimm, U.: Aperiodic order, vol. 1. Cambridge University Press, Cambridge (2013)

\bibitem{baake2019renormalisation} Baake, M. and G{\"a}hler, F. and Ma{\~{n}}ibo, N.: Renormalisation of pair correlation measures for primitive inflation rules and absence of absolutely continuous diffraction. Commun. Math. Phys. {\bf 370}(2), 591--635 (2019)

\bibitem{baake2015dynamical} Baake, M., Lenz, D., van Enter, A.: Dynamical versus diffraction spectrum for structures with finite local complexity. Ergod. Th. \& Dynam. Sys. {\bf 35}(7), 2017--2043 (2015)

\bibitem{bufetov2013limit} Bufetov, A., Solomyak, B.: Limit theorems for self-similar tilings. Commun. Math. Phys.
{\bf 319}(3), 761--789 (2013)

\bibitem{bufetov2014modulus} Bufetov, A., Solomyak, B.: On the modulus of continuity for spectral measures in substitution dynamics. Adv. Math. {\bf 260}, 84--129 (2014)

\bibitem{bufetov2018holder} Bufetov, A., Solomyak, B.: The {H}{\"o}lder property for the spectrum of translation flows in genus two. Israel J. Math. {\bf 223}(1), 205--259 (2018)

\bibitem{bufetov2018ergodic} Bufetov, A., Solomyak, B.: On ergodic averages for parabolic product flows. Israel J. Math. {\bf 146}(4), 675--690 (2018)

\bibitem{bufetov2020spectral} Bufetov, A., Solomyak, B.: A spectral cocycle for substitution systems and translation flows. J. Anal. Math. {\bf 141}, 165--205 (2020)

\bibitem{JEP_2021__8__279_0} Bufetov, A., Solomyak, B.: H\"older regularity for the spectrum of translation flows. J. Ec. Polytech. - Math. {\bf 8}, 279--310 (2021)

\bibitem{bufetov2023self} Bufetov, A., Solomyak, B.: Self-similarity and spectral theory: on the spectrum of substitutions. St. Petersburg Math. J. {\bf 34}(3), 313--346 (2023)

\bibitem{einsiedler2017functional} Einsiedler, M., Ward, T.: Functional analysis, spectral theory, and applications. Graduate Texts in Mathematics, vol. 276. Springer, Berlin (2017)

\bibitem{Jemme} Emme, J.: Spectral measure at zero for self-similar tilings. Mosc. Math. J. {\bf 17}(1), 35--49 (2017)

\bibitem{FKSpectral} Feres, R., Katok, A.: Ergodic theory and dynamics of {G}-spaces (with special emphasis on rigidity phenomena). Handbook of dynamical systems. {\bf 1}, 665--763 (2002)

\bibitem{forni2022twisted} Forni, G.: Twisted translation flows and effective weak mixing. J. Eur. Math. Soc. {\bf 24}(12), 4225--4276 (2022)

\bibitem{Garsia} Garsia, A.: Arithmetic properties of {B}ernoulli convolutions. Trans. Amer. Math. Soc. {\bf 102}(3), 409--432 (1962)

\bibitem{Knill} Knill, O.: Singular continuous spectrum and quantitative rates of weak mixing. Discrete Contin. Dyn. Syst. {\bf 4}, 33--42 (1998)

\bibitem{Kenyon} Kenyon, R., Solomyak, B.: On the characterization of expansion maps for self-affine tilings. Discrete Comput. Geom. {\bf 43}(3), 577--593 (2010)

\bibitem{SelfExpansions} Kwapisz, J.: Inflations of self-affine tilings are integral algebraic {P}erron. Invent. Math. {\bf 205}(1), 173--220 (2016)

\bibitem{lee2003consequences} Lee, J.-Y., Moody, R., Solomyak, B.: Consequences of pure point diffraction spectra for multiset substitution systems. Discrete Comput. Geom. {\bf 29}, 525--560 (2003)

\bibitem{lee2008pure} Lee, J.-Y., Solomyak, B.: Pure Point Diffractive Substitution {D}elone Sets Have the {M}eyer Property. Discrete Comput. Geom. {\bf 39}(1), 319--338 (2008)

\bibitem{marshall2017modulo} Marshall-Maldonado, J.: M{\'o}dulo de continuidad para las medidas de correlaci{\'o}n en sistemas substitutivos de tilings. Masters Thesis, Universidad de Chile, Facultad de ciencias f\'isicas y matem\'aticas. (2017)

\bibitem{praggastis1999numeration} Praggastis, B.: Numeration systems and Markov partitions from self similar tilings. Trans. Amer. Math. Soc. {\bf 351}(8), 3315--3349 (1999)

\bibitem{robinson2004symbolic} Robinson, E. A.: Symbolic dynamics and tilings of {$\R^{d}$}. Proc. Sympos. Appl. Math. {\bf 60}, 81--120 (2004)

\bibitem{sadun2008topology} Sadun, L.: Topology of tiling spaces. University Lecture Series, vol. 46. American Mathematical
Society, Providence (2008)

\bibitem{solomyak1997dynamics} Solomyak, B.: Dynamics of self-similar tilings. Ergod. Th. \& Dynam. Sys. {\bf 17}(3), 695--738 (1997)

\bibitem{SolUniq} Solomyak, B.: Nonperiodicity implies unique composition for self-similar translationally finite tilings. Discrete Comput. Geom. {\bf 20}(2), 265--279 (1998)

\bibitem{schmieding2018self} Schmieding, S., Trevi{\~n}o, R.: Self affine {D}elone sets and deviation phenomena. Commun. Math. Phys. {\bf 357}(3), 1071--1112 (2018)

\bibitem{solomyak2024spectral} Solomyak, B., Trevi{\~n}o, R.: Spectral cocycle for substitution tilings. Ergod. Th. \& Dynam. Sys. {\bf 44}(6), 1629--1672 (2024)

\bibitem{trevino2020quantitative} Trevi{\~n}o, R.: Quantitative weak mixing for random substitution tilings.Israel J. Math., to appear.

\bibitem{articletrevino} Trevi{\~n}o, R.: Aperiodic Tilings, Order, and Randomness. Notices Amer. Math. Soc. {\bf 70}(8), 1179--1191 (2023)


\end{thebibliography}

\end{document}